\patchcmd{\@setaddresses}{\indent}{\noindent}{}{}
\patchcmd{\@setaddresses}{\indent}{\noindent}{}{}
\patchcmd{\@setaddresses}{\indent}{\noindent}{}{}
\patchcmd{\@setaddresses}{\indent}{\noindent}{}{}
\newtheorem{theorem}{\sc Theorem}[section]
\newtheorem{proposition}[theorem]{\sc Proposition}
\newtheorem{lemma}[theorem]{\sc Lemma}
\newtheorem{corollary}[theorem]{\sc Corollary}
\theoremstyle{definition}
\newtheorem{definition}[theorem]{\sc Definition}
\newtheorem{example}[theorem]{\sc Example}
\theoremstyle{remark}
\newtheorem{remark}[theorem]{\sc Remark}
\newenvironment{invisible}{{\noindent\sc \colorbox{yellow}{Invisible:}\;}\color{gray}}{\medskip}
\def\pulb{\ar@{}[dr]|(0.2){\mbox{\Large{$\lrcorner$}}}}
\newcommand{\id}{\mathrm{Id}}
\newcommand{\Aa}{\mathcal{A}}
\newcommand{\Bb}{\mathcal{B}}
\newcommand{\Cc}{\mathcal{C}}
\newcommand{\Ff}{\mathcal{F} }
\newcommand{\Gg}{\mathcal{G}}
\newcommand{\Hh}{\mathcal{H}}
\newcommand{\Mm}{\mathcal{M}}
\newcommand{\Qq}{\mathcal{Q}}
\newcommand{\Rr}{\mathcal{R} }
\newcommand{\Ss}{\mathcal{S} }
\newcommand{\Tt}{\mathcal{T}}
\newcommand{\Uu}{\mathcal{U}}
\newcommand{\Vv}{\mathcal{V}}
\newcommand{\Ww}{\mathcal{W}}
\newcommand{\ot}{\otimes}
\newcommand{\hc}{\circ_h} 
\newcommand{\vc}{\circ_v} 
\def\Bialg{{\sf Bialg}}
\def\Bialgcc{{\sf Bialg_{cc}}}
\def\TwBialg{{\sf Tw}}
\def\TrBialg{{\sf Tr}}
\def\TwTrBialg{{\sf TwTr}}
\def\Vec{{\sf Vec}}
\def\Cat{{\sf Cat}}
\newcommand{\op}{\mathrm{op}}
\newcommand{\cop}{\mathrm{cop}}
\begin{document}

\begin{abstract}
    It is well-known that the tensor product of two bialgebras constitutes the binary product in the category of cocommutative bialgebras and morphisms of bialgebras between them. In this paper, we extend this result to triangular bialgebras and twisted morphisms of triangular bialgebras. We do so by adopting the framework of 2-categories and the proper notion of binary product, as well as by employing a description of twists on the tensor product bialgebra, specifically developed for this purpose. We apply this extension to provide a new interpretation of the twisted tensor products of triangular bialgebras in terms of binary products. 
\end{abstract}

\title[\tiny The binary product in the 2-category of triangular bialgebras and twisted morphisms]{The binary product in the 2-category of triangular bialgebras and twisted morphisms}

\author[A.~Ardizzoni]{Alessandro Ardizzoni\, \orcidlink{0000-0001-7384-611X}}
\address{%
\parbox[b]{\linewidth}{University of Turin, Department of Mathematics ``G. Peano'', via
Carlo Alberto 10, I-10123 Torino, Italy}}
\email{alessandro.ardizzoni@unito.it}
\urladdr{\url{www.sites.google.com/site/aleardizzonihome}}

\author[A.~Sciandra]{Andrea Sciandra\, \orcidlink{0009-0008-0447-287X}}
\address{%
\parbox[b]{\linewidth} {Département de Mathématiques, Université Libre de Bruxelles, Boulevard du Triomphe, B-1050 Bruxelles, Belgium}}
\email{andrea.sciandra@ulb.be}
\urladdr{\url{www.andreasciandra.com}}

 \keywords{Triangular bialgebras, twists, 2-categories, binary products.}
\subjclass[2020]{Primary 16T10; Secondary 18A30, 18N10}

\maketitle

\tableofcontents

\section{Introduction}

Among general monoidal categories, the cartesian ones have several important properties. These are monoidal categories where the tensor product is provided by the binary product $\times$ and the unit object by the terminal object $\mathsf{1}$. An object $C$ in a cartesian category $\Cc$ automatically has morphisms $\Delta:C\to C\times C$ and $\varepsilon:C\to\mathsf{1}$ such that $(C,\Delta,\varepsilon)$ becomes a comonoid in $\Cc$. Therefore, bimonoid objects in $\Cc$ coincide with monoid objects and Hopf monoid objects with \textit{internal groups} in $\Cc$. Under suitable assumptions on the category $\Cc$, the category $\mathsf{Grp}(\Cc)$ of internal groups in $\Cc$ shares many beautiful properties with the category of groups. For instance, if $\Cc$ has finite limits, then $\mathsf{Grp}(\Cc)$ is protomodular \cite{Bourn}, i.e.\ the Split Short Five Lemma holds. Moreover, if $\mathsf{Grp}(\Cc)$ is semi-abelian \cite{JMT}, i.e.\ it is also exact in the sense of \cite{Barr} and has binary coproducts, then it is also action representable \cite{BJK} whenever $\Cc$ is cartesian closed.

This happens considering the cartesian monoidal category of cocommutative coalgebras, where the internal groups are given by cocommutative Hopf algebras that constitute a semi-abelian category \cite{GSV}. Since the tensor product of two cocommutative Hopf algebras coincides with their binary product, one can give an explicit description of the Huq commutator of two normal Hopf subalgebras of a given cocommutative Hopf algebra and also recover the abelian category of commutative and cocommutative Hopf algebras as the category of abelian objects inside the category of cocommutative Hopf algebras \cite{GSV}.

It is noteworthy that, even if we drop out the existence of an antipode, the resulting category of cocommutative bialgebras remains cartesian. We cannot make a further step by simply removing the assumption of cocommutativity, as in the category of all bialgebras the binary products do exist, but their construction is generally neither straightforward \cite{Agore} nor related to the tensor products.
\begin{invisible}
Per giustificare l'ultima frase bisogna dare un controesempio.
    Nell'articolo con Menini-Saracco daremo un esempio del caso duale. Se $A=\Bbbk[X]$ e $B=\Bbbk[Y]$ con $X,Y$ primitivi, siccome il funtore bialgebra tensoriale $T:\Vec\to \Bialg$ preserva i coprodotti (è un aggiunto sinistro di $P:\Bialg\to \Vec$), allora $A\coprod B=T(\Bbbk X)\coprodT(\Bbbk Y)\cong T(\Bbbk X\oplus \Bbbk Y)=\Bbbk\{X,Y\}$. Questa non è commutativa e quindi non può essere isomorfa a $A\ot B=\Bbbk[X]\otimes\Bbbk[Y]\cong \Bbbk[X,Y]$.
\end{invisible}
Instead, once observed that cocommutative bialgebras are particular instances of triangular bialgebras, we can ask ourselves whether a suitable notion of binary product exists for triangular bialgebras, and if it ultimately coincides with the corresponding tensor product. To this aim, we first observe that, given two (quasi)triangular bialgebras $(H_{1},\Rr_{1})$ and $(H_{2},\Rr_{2})$, the tensor product bialgebra $H_{1}\ot H_{2}$ becomes (quasi)triangular through 
\[
\widetilde{\Rr}:=(\id \ot\tau\ot\id )(\Rr_{1}\ot\Rr_{2}). 
\]
However, $(H_{1}\ot H_{2},\widetilde{\Rr})$ is not the binary product of $(H_{1},\Rr_{1})$ and $(H_{2},\Rr_{2})$ in the category of (quasi)triangular bialgebras and morphism of (quasi)triangular bialgebras. 
In fact, the main aim of this paper is to prove that $(H_{1}\ot H_{2},\widetilde{\Rr})$ can be structured as a binary product in the $2$-category of triangular bialgebras and twisted morphisms of triangular bialgebras whose definition is inspired by \cite{Davydov}. \medskip

More precisely, the content of the paper is the following. In Section \ref{sec:preliminaries}, we recall some preliminary notions concerning twists and quasitriangular structures that will be useful throughout the paper. It is known that, given a bialgebra $(H,m,u,\Delta,\varepsilon)$ and a twist $\Ff\in H\ot H$ as in the seminal paper of Drinfel'd \cite{Dr87}, one obtains a bialgebra $H_{\Ff}:=(H,m,u,\Delta_{\Ff},\varepsilon)$, where $\Delta_{\Ff}(\cdot):=\Ff\Delta(\cdot)\Ff^{-1}$ and, if $(H,\Rr)$ is (quasi)triangular, also $H_{\Ff}$ is (quasi)triangular with $\Rr_{\Ff}:=\Ff^{\mathrm{op}}\Rr\Ff^{-1}$. In Section \ref{sec:twisttensorproduct}, we provide a characterization of twists on the tensor product bialgebra, up to cohomology, that we consider of independent interest.  Explicitly, in Theorem \ref{thm:twistot} and Lemma \ref{lem:Drinfeldtwist}, we are able to prove that a twist on $H_{1}\ot H_{2}$ is cohomologous to a twist of the form
\[
(1\ot\Rr^{-1}\ot1)\big((\id \ot\tau\ot\id )(\Ff_{1}\ot\Ff_{2})\big),
\]
where $\Ff_{1}\in H_{1}\ot H_{1}$ and $\Ff_{2}\in H_{2}\ot H_{2}$ are twists on $H_{1}$ and $H_{2}$, respectively, and $\Rr\in H_{2}\ot H_{1}$ is a weak $\Rr$-matrix of $((H_2)_{\Ff_2},(H_1)_{\Ff_1})$ in the sense of \cite{Chen-quasi}. Remarkably, if, in particular, one considers a quasitriangular structure on $H_{1}\ot H_{2}$, then also $\Ff_{1}$ and $\Ff_{2}$ become quasitriangular, and the weak $\Rr$-matrix turns out to be central, see Proposition \ref{pro:Rot}. 

In Section \ref{sec:2-categories}, we adopt the 2-categorical language and in Section \ref{sec:binaryproduct} we extend the binary product construction of cocommutative bialgebras to triangular bialgebras by employing the proper notion of binary product in a 2-category. Given two objects $A,B$ in a 2-category $\Cc$, the binary product is an object $A\times B$ in $\Cc$ equipped with two projections $p:A\times B\to A$ and $q:A\times B\to B$ such that, for any object $X$ in $\Cc$, the induced functor $\Cc(X,A\times B)\to \Cc(X,A)\times \Cc(X,B)$ is an equivalence of categories, where $\Cc(X,Y)$ denotes the category of $1$-cells from $X$ to $Y$, for any object $Y$ in $\Cc$.  
Given a quasitriangular bialgebra $(H,\Rr)$ and bialgebra maps $f_{1}:H\to H_{1}$ and $f_{2}:H\to H_{2}$, we obtain a bialgebra map
\begin{equation}\label{diagmorph}
f:=(f_{1}\ot f_{2})\Delta_{H}:H\to(H_{1}\ot H_{2})_{\Ff},\  \text{where}\ \Ff:=1_{H_{1}}\ot(f_{2}\ot f_{1})(\Rr^{-1})\ot1_{H_{2}} 
\end{equation}
is a twist of $H_{1}\ot H_{2}$, i.e.\ $(f,\Ff):H\to(H_{1}\ot H_{2})_{\Ff}$ is a twisted morphism of bialgebras in the sense of Davydov \cite{Davydov}. In loc.\ cit.\ it is noticed that bialgebras and twisted morphisms of bialgebras form a 2-category $\TwBialg$, where, given twisted morphisms of bialgebras $(f',\Ff'),(f,\Ff):H\to H'$, a $2$-cell is a so-called gauge transformation $a:(f,\Ff)\Rightarrow(f',\Ff')$ which consists of an element $a\in H'$ with $\varepsilon'(a)=1$ such that the following equalities are satisfied
\[
(a\ot a)\Ff=\Ff'\Delta'(a),\qquad af(x)=f'(x)a.
\]
Taking the $2$-category $\TwBialg$ as a model, we introduce the 2-category $\TwTrBialg$ of triangular bialgebras and twisted morphisms: 0-cells are given by triangular bialgebras, 1-cells are pairs $(f,\Ff):(H,\Rr)\to(H',\Rr')$ such that $f:(H,\Rr)\to(H'_{\Ff},\Rr'_{\Ff})$ is a morphism of triangular bialgebras (and we call them twisted morphisms of triangular bialgebras) and 2-cells are given by gauge transformations, that are automatically compatible with the triangular structures involved. In Theorem \ref{thm:Tw2cart}, we prove that the triangular bialgebra $(H_{1}\ot H_{2},\widetilde{\Rr})$ is the binary product of the triangular bialgebras $(H_{1},\Rr_{1})$ and $(H_{2},\Rr_{2})$ in the 2-category $\TwTrBialg$, where the projections are given by $(\id \ot\varepsilon,1\ot1):(H_{1}\ot H_{2},\widetilde{\Rr})\to(H_{1},\Rr_{1})$ and $(\varepsilon\ot\id ,1\ot1):(H_{1}\ot H_{2},\widetilde{\Rr})\to(H_{2},\Rr_{2})$. Moreover, cf.\ Proposition \ref{pro:Tw2cart1}, given $(f_{1},\Ff_{1}):(H,\Rr)\to(H_{1},\Rr_{1})$ and $(f_{2},\Ff_{2}):(H,\Rr)\to(H_{2},\Rr_{2})$ in $\TwTrBialg$, the diagonal morphism $(f,\Ff):(H,\Rr)\to(H_{1}\ot H_{2},\widetilde{\Rr})$ is defined as in \eqref{diagmorph}.
Since $\TwTrBialg$ has terminal object given by $(\Bbbk,1\ot1)$ (see Lemma \ref{lem:termobj}), then it has all finite products and, moreover, it becomes a symmetric monoidal 2-category. The attached classifying category, obtained by identifying isomorphic $1$-cells, turns out to be a cartesian monoidal category.

As an application of our results, in Section \ref{sec:applications}, we employ the binary product construction to offer a new perspective on the twisted tensor products of triangular bialgebras, as recently introduced in \cite{Wu-Zhou}. \medskip

\noindent\textit{Notations and conventions}. All vector spaces are understood to be $\Bbbk$-vector spaces, where $\Bbbk$ is an arbitrary base field. By a linear map we mean a $\Bbbk$-linear map and the unadorned tensor product $\otimes$ is the one of vector spaces. We denote the canonical flip of vector spaces by $\tau$ and the set of multiplicative invertible elements in a ring $H$ by $H^\times$.

\section{Preliminaries}\label{sec:preliminaries}

Quasitriangular bialgebras were introduced in the seminal paper of Drinfel'd \cite{Dr87} and immediately became central in the theory of quantum groups. In this section, we recall some preliminary notions and results related to quasitriangular bialgebras and we mainly refer the reader to \cite{Kassel} and \cite{Majid-book} for more details about them. \medskip

From now on, $H$ will always denote a bialgebra over an arbitrary field $\Bbbk$.
In the following, given an element $T\in H\otimes H$, we adopt the short notation $T=\sum_{i}{T^{i}\otimes T_{i}}=T^{i}\otimes T_{i
}$, the summation being understood.
We set $T^\op=T_i\otimes T^i.$ 
If $T$ is invertible, we write $T^{-1}=\overline T=\overline{T}^{i}\otimes\overline{T}_{i}$. Moreover, we employ the leg notation $T_{12}=T\otimes1_{H}$, $T_{23}=1_{H}\otimes T$, $T_{13}=T^{i}\ot1_{H}\ot T_{i}$. 

\begin{definition}[\cite{Dr87}]
A bialgebra $H$ is said to be \emph{quasitriangular} if there is an invertible element $\Rr\in H\otimes H$, the \emph{universal $\Rr$-matrix} or \emph{quasitriangular structure}, such that $H$ is quasi-cocommutative, i.e.,
\begin{equation}\label{qtr1}
    \Delta^\mathrm{op}(\cdot)=\Rr\Delta(\cdot)\Rr^{-1},
\end{equation}
and the \emph{hexagon equations}
\begin{align}
    (\id _H\otimes\Delta)(\Rr)&=\Rr_{13}\Rr_{12},\label{qtr2}\\
    (\Delta\otimes\id _H)(\Rr)&=\Rr_{13}\Rr_{23},\label{qtr3}
\end{align}
are satisfied. If in addition 
\begin{equation}\label{tr}
\Rr ^{-1}=\Rr ^\op,\end{equation} 
then $(H,\Rr)$ is called \emph{triangular}. 

A morphism $f:(H,\Rr)\to(H',\Rr')$ between (quasi)triangular bialgebras $(H,\Rr)$ and $(H',\Rr')$ is a morphism of bialgebras $f:H\to H'$ such that $(f\ot f)(\Rr)=\Rr'$. 
In what follows, the category of triangular bialgebras and morphisms between triangular bialgebras will be denoted by $\TrBialg$.
\end{definition}

\begin{remark}
Recall, see e.g.\ \cite[Theorem VIII.2.4]{Kassel},  that a quasitriangular bialgebra $(H,\Rr)$ satisfies the \textit{quantum Yang-Baxter} equation
\begin{equation}\label{eq:QYB}
\Rr_{12}\Rr_{13}\Rr_{23}=\Rr_{23}\Rr_{13}\Rr_{12},
\end{equation}
and $(\varepsilon\otimes\id _{H})(\Rr^{\pm 1})=1_{H}=(\id _{H}\otimes\varepsilon)(\Rr^{\pm 1})$.
\end{remark}

We also recall the notion of twist, which is attributed to Drinfel'd, see \cite[page 909]{Dr87}, and that will be central in the following.

\begin{definition}\label{def:twist}
An invertible element $\Ff\in H\otimes H$ is said to be a \emph{twist} on $H$ if the $2$-cocycle condition and the normalization properties
\begin{align}
    (\Ff\otimes 1_{H})(\Delta\otimes\id _H)(\Ff)&=(1_{H}\otimes\Ff)(\id _H\otimes\Delta)(\Ff),\label{2-cocy}\\
    (\varepsilon\otimes\id _H)(\Ff)&=1_{H}=(\id _H\otimes\varepsilon)(\Ff),\label{norm}
\end{align}
are satisfied. 
\end{definition}

Given a twist $\Ff$ one can consider the linear map $\Delta_\Ff\colon H\rightarrow H\otimes H$ defined via
\begin{equation}
\label{eq:DeltaF}\Delta_\Ff(\cdot):=\Ff\Delta(\cdot)\Ff^{-1}.
\end{equation}
Then, $H_\Ff:=(H,m,u,\Delta_\Ff,\varepsilon)$ is a bialgebra. 
Moreover, if $(H,\Rr)$ is a (quasi)triangular bialgebra, so is $H_\Ff$ with universal $\Rr$-matrix $\Rr_\Ff:=\Ff^\op\Rr\Ff^{-1}$, cf.\ \cite[Theorem 2.3.4]{Majid-book}.

If $\Ff$ is a twist and $h\in H^\times$, we set 
\begin{equation}
\label{def:F^h}
\Ff^h\coloneqq\left( h\otimes h\right) \Ff 
\Delta \left(  h ^{-1}\right).    
\end{equation}

Two Drinfeld twists $\Ff'$, $\Ff$ on $H$ are \emph{cohomologous} if $(\Ff')^h=\Ff$ for some $h\in H^\times$, see \cite[Proposition 2.3.3]{Majid-book}. In this case,  the map $\hat{h}
:=h\left( -\right)  h
^{-1}:H _{\Ff '}\rightarrow H _{\Ff }$ turns out to be a bialgebra isomorphism, see the proof of \cite[Proposition 2.3.5]{Majid-book}, where the domain and codomain seem to be reversed. 

\begin{example}
\label{exa:Rtwist}
Observe that every quasitriangular structure $\Rr$ on a bialgebra $H$ is a twist. In fact, $\Rr$ is normalized and satisfies the 2-cocycle property since
\[
\Rr_{12}(\Delta\otimes\id _H)(\Rr)\overset{\eqref{qtr3}}=\Rr_{12}\Rr_{13}\Rr_{23}\overset{\eqref{eq:QYB}}{=}\Rr_{23}\Rr_{13}\Rr_{12}\overset{\eqref{qtr2}}=\Rr_{23}(\id _H\otimes\Delta)(\Rr).\]
Then, by \eqref{qtr1}, we have $\Delta_\Rr(\cdot)=\Rr\Delta(\cdot)\Rr^{-1}=\Delta^\op(\cdot)$ so that $H_\Rr=H^\cop$, see \cite[Example 2.3.6]{Majid-book}.
\end{example}

\section{Twists on the tensor product bialgebra}\label{sec:twisttensorproduct}

In this section, we provide a classification result, up to cohomology, for the twists on the tensor product bialgebra that will be useful in the following. This result may be known, but we have not found any reference in the literature.

We need the following weakening of the notion of universal $\Rr$-matrix.

\begin{definition}[{\cite[Definition 1.1]{Chen-quasi}}]
Let $A$ and $B$ be bialgebras. An invertible element $\Rr\in A\otimes B$, is called \emph{weak $\Rr$-matrix of $(A,B)$} if
the \emph{weak hexagon equations}
\begin{align}
  (\id _A\otimes\Delta_B)(\Rr)&=\Rr_{13}\Rr_{12},\label{wtr2}\\
    (\Delta_A\otimes\id _B)(\Rr)&=\Rr_{13}\Rr_{23},\label{wtr3}
\end{align}
are satisfied.
\end{definition}

We recall that $(\varepsilon_{A}\ot\id )(\Rr)=1_{B}$ and $(\id \ot\varepsilon_{B})(\Rr)=1_{A}$, see \cite[Lemma 1.2]{Chen-quasi}. 

\begin{lemma}\label{lem:weakop}
Let $A$ and $B$ be bialgebras. The following are equivalent for an invertible element $\Rr \in A\otimes B$. 
\begin{enumerate}
    \item[(i)] $\Rr $ is a (resp.\ central) weak $\Rr $-matrix of $\left(
A,B\right) .$ 

\item[(ii)]  $\Rr ^{\mathrm{op}}$ is a  (resp.\ central) weak $%
\Rr $-matrix of $\left( B^{\mathrm{op}},A^{\mathrm{op}}\right) $ (resp.\ $\left(B,A\right) $).

\item[(iii)]  $\Rr ^{-1}$ is a  (resp.\ central) weak $
\Rr $-matrix of $\left( A^{\mathrm{cop}},B^{\mathrm{cop}}\right) $  (resp.\ $\left(A,B\right) $).
\end{enumerate}

\end{lemma}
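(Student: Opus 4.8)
The plan is to reduce everything to the two weak hexagon equations \eqref{wtr2} and \eqref{wtr3} and track how they transform under the flip $\tau$, the passage to opposite/co-opposite structures, and inversion. First I would record the preliminary observations that make the bookkeeping uniform. Since $\tau\colon A\otimes B\to B\otimes A$ is a bijective (anti)homomorphism and inversion preserves invertibility, both $\mathcal{R}^{\op}=\tau(\mathcal{R})$ and $\mathcal{R}^{-1}$ are automatically invertible, so there is nothing to check on that front (and the counit normalizations are consequences by the recalled \cite[Lemma 1.2]{Chen-quasi}, not part of the definition). Moreover, $\op$, $\cop$, the flip, and inversion are involutive: since $(B^{\op})^{\op}=B$ and $\tau(\tau(\mathcal{R}))=\mathcal{R}$, and $(A^{\cop})^{\cop}=A$ and $(\mathcal{R}^{-1})^{-1}=\mathcal{R}$, it suffices to prove one implication in each of (i)$\Rightarrow$(ii) and (i)$\Rightarrow$(iii); the converses follow by applying the same statement to the transported data. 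I would also note that the maps $\mathrm{Id}_A\otimes\Delta_B$ and $\Delta_A\otimes\mathrm{Id}_B$ are \emph{algebra} homomorphisms, hence commute with inversion, and that the relevant leg-permutations are algebra isomorphisms between the appropriately structured triple tensor products.

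For (i)$\Leftrightarrow$(ii) I would set $S:=\mathcal{R}^{\op}=\tau(\mathcal{R})$ and relate the hexagon for $S$ comultiplied in its second factor to \eqref{wtr3} for $\mathcal{R}$ (and symmetrically the first-factor hexagon to \eqref{wtr2}). Concretely, the cyclic permutation $\sigma(x\otimes y\otimes z)=z\otimes x\otimes y$ carries $(\Delta_A\otimes\mathrm{Id}_B)(\mathcal{R})$ to $(\mathrm{Id}\otimes\Delta_A)(S)$ and, being an algebra map for the relevant tensor algebra, sends the right-hand side $\mathcal{R}_{13}\mathcal{R}_{23}$ to $S_{12}S_{13}$. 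In the $\left(B^{\op},A^{\op}\right)$ reading the opposite multiplication reverses the factor order, so $S_{13}S_{12}$ computed with $\cdot_{\op}$ equals $S_{12}S_{13}$ with the original product, matching exactly; no centrality is needed. In the central reading over $\left(B,A\right)$ one keeps the original multiplications, and instead uses that $S$ is central (as $\tau$ preserves centrality) to commute $S_{12}S_{13}=S_{13}S_{12}$ and recover the hexagon in the prescribed order. This is precisely the role of the hypothesis ``central''.

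For (i)$\Leftrightarrow$(iii) I would apply the algebra homomorphisms $\mathrm{Id}_A\otimes\Delta_B$ and $\Delta_A\otimes\mathrm{Id}_B$ to $\mathcal{R}\mathcal{R}^{-1}=1$, equivalently invert \eqref{wtr2} and \eqref{wtr3}, to obtain $(\mathrm{Id}_A\otimes\Delta_B)(\mathcal{R}^{-1})=(\mathcal{R}^{-1})_{12}(\mathcal{R}^{-1})_{13}$ and $(\Delta_A\otimes\mathrm{Id}_B)(\mathcal{R}^{-1})=(\mathcal{R}^{-1})_{23}(\mathcal{R}^{-1})_{13}$. To land in the co-opposite setting I would conjugate by the flips $\tau_{23}$ (resp.\ $\tau_{12}$), which are algebra automorphisms permuting the two identical tensor legs: $\tau_{23}$ converts $\Delta_B$ into $\Delta_{B^{\cop}}$ and swaps the $12$ and $13$ legs, turning the displayed identity into $(\mathrm{Id}\otimes\Delta_{B^{\cop}})(\mathcal{R}^{-1})=(\mathcal{R}^{-1})_{13}(\mathcal{R}^{-1})_{12}$, which is \eqref{wtr2} for $\mathcal{R}^{-1}$ over $\left(A^{\cop},B^{\cop}\right)$; the first-factor equation is handled identically by $\tau_{12}$. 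For the central reading over $\left(A,B\right)$ one skips the $\cop$-conjugation and instead uses centrality of $\mathcal{R}^{-1}$ to reorder $(\mathcal{R}^{-1})_{12}(\mathcal{R}^{-1})_{13}=(\mathcal{R}^{-1})_{13}(\mathcal{R}^{-1})_{12}$ directly.

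The computations themselves are routine once the framework is fixed; I expect the main obstacle to be purely organizational, namely keeping the leg-notation unambiguous and verifying that each permutation $\sigma$, $\tau_{23}$, $\tau_{12}$ is genuinely an algebra map for the possibly \emph{opposite} triple tensor product in play, and being scrupulous about exactly where the reordering is supplied by the opposite multiplication (in the non-central statements) versus by centrality of $\mathcal{R}$ (in the central statements). Writing the argument as ``apply an algebra permutation, then invert/reverse, then recognize the result'' should keep these hazards isolated and make the symmetry between the op-case and the inverse-case transparent.
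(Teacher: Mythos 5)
Your proposal is correct and follows essentially the same route as the paper's proof: reduce to one implication per equivalence via involutivity of $\op$, $\cop$, $\tau$ and inversion, transport the weak hexagon equations \eqref{wtr2}--\eqref{wtr3} through leg permutations and inversion of the algebra maps $\mathrm{Id}\otimes\Delta$ and $\Delta\otimes\mathrm{Id}$, and then account for the reversed factor order either by the opposite multiplication or by centrality. The only difference is presentational — you organize the bookkeeping via explicit permutation algebra isomorphisms where the paper writes out the components directly.
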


\begin{proof}
$(i)\Rightarrow (ii)$. From \eqref{wtr2} we get $\Rr ^{i}\otimes \Delta _{B}\left( \Rr_{i}\right) =\Rr ^{i}\Rr ^{j}\otimes \Rr _{j}\otimes
\Rr _{i}$ and so
\begin{eqnarray*}
\left( \Delta _{B}\otimes \id _{A}\right) \left( \Rr ^{\mathrm{%
op}}\right)  &=&\Delta_{B} \left( \Rr _{i}\right) \otimes \Rr ^{i}=%
\Rr _{j}\otimes \Rr _{i}\otimes \Rr ^{i}\Rr ^{j}
\\
&=&\Rr _{j}\otimes \Rr _{i}\otimes (\Rr ^{j}\cdot _{%
\mathrm{op}}\Rr ^{i})=\Rr _{13}^{\mathrm{op}}\cdot _{\mathrm{op}%
}\Rr _{23}^{\mathrm{op}}.
\end{eqnarray*}%
Similarly, from \eqref{wtr3} we get $\Delta _{A}\left( \Rr ^{i}\right)
\otimes \Rr _{i}=\Rr ^{i}\otimes \Rr ^{j}\otimes
\Rr _{i}\Rr _{j}$ and hence%
\begin{eqnarray*}
\left( \id _{B}\otimes \Delta _{A}\right) \left( \Rr ^{\mathrm{%
op}}\right)  &=&\Rr _{i}\otimes \Delta _{A}\left( \Rr %
^{i}\right) =\Rr _{i}\Rr _{j}\otimes \Rr ^{i}\otimes
\Rr ^{j} \\
&=&(\Rr _{j}\cdot _{\mathrm{op}}\Rr _{i})\otimes \Rr %
^{i}\otimes \Rr ^{j}=\Rr _{13}^{\mathrm{op}}\cdot _{\mathrm{op}%
}\Rr _{12}^{\mathrm{op}}.
\end{eqnarray*}
Note that, when $\Rr$ is central, we immediately have 
$\left( \Delta _{B}\otimes \id _{A}\right) \left( \Rr ^{\mathrm{%
op}}\right)  =
\Rr _{j}\otimes \Rr _{i}\otimes \Rr ^{i}\Rr ^{j}
=
\Rr _{j}\otimes \Rr _{i}\otimes \Rr ^{j}\Rr ^{i}
=\Rr_{13}^\op\Rr_{23}^\op$ and similarly 
$\left( \id _{B}\otimes \Delta _{A}\right) \left( \Rr ^{\mathrm{%
op}}\right) =\Rr _{13}^{\mathrm{op}}\Rr _{12}^{\mathrm{op}}$ so that $\Rr^\op$ is a central weak $\Rr$-matrix of $(B,A)$.

$(ii)\Rightarrow (i)$. Apply the converse implication to $\Rr^\op.$

$(i)\Rightarrow (iii)$.
As in \cite[Lemma 1.2]{Chen-quasi}, from \eqref{wtr2} we get $\left( \id %
_{A}\otimes \Delta _{B}\right) \left( \Rr ^{-1}\right) =\Rr %
_{12}^{^{-1}}\Rr _{13}^{^{-1}}$ whence $\left( \id %
_{A}\otimes \Delta _{B}^{\mathrm{op}}\right) \left( \Rr ^{-1}\right) =%
\Rr _{13}^{^{-1}}\Rr _{12}^{^{-1}}$. Similarly, from \eqref{wtr3} we get $\left( \Delta _{A}\otimes \id %
_{B}\right) \left( \Rr ^{-1}\right) =\Rr _{23}^{-1}\Rr %
_{13}^{-1}$ whence $\left( \Delta _{A}^{\mathrm{op}}\otimes \id %
_{B}\right) \left( \Rr ^{-1}\right) =\Rr _{13}^{-1}\Rr %
_{23}^{-1}$. Note that, if $\Rr$ is central, we immediately have $\left( \id %
_{A}\otimes \Delta _{B}\right) \left( \Rr ^{-1}\right)=\Rr %
_{12}^{^{-1}}\Rr _{13}^{^{-1}}=%
\Rr _{13}^{^{-1}}\Rr _{12}^{^{-1}}$ and $\left( \Delta _{A}\otimes \id %
_{B}\right) \left( \Rr ^{-1}\right) =\Rr _{23}^{-1}\Rr %
_{13}^{-1}=\Rr _{13}^{-1}\Rr %
_{23}^{-1}$ so that $\Rr^{-1}$ is a central weak $\Rr$-matrix of $(A,B)$.

$(iii)\Rightarrow (i)$. Apply the converse implication to $\Rr^{-1}.$
\end{proof}

\begin{example}
\label{exa:weakR}
Let $\alpha:H\to A$ and $\beta:H\to B$ be bialgebra maps and let $\Rr\in H\otimes H$ be an $\Rr$-matrix. Then $(\alpha\otimes \beta)(\Rr)$ is a weak $\Rr$-matrix of $(A,B)$. To see this, apply $\alpha\otimes\beta\otimes\beta $ to \eqref{qtr2} and apply $\alpha\otimes\alpha\otimes\beta $ to \eqref{qtr3} and observe that the inverse of $(\alpha\otimes \beta)(\Rr)$ is $(\alpha\otimes \beta)(\Rr^{-1})$.
\end{example}

\begin{lemma}
Let $\Rr \in A\otimes B$ be a weak $\Rr$-matrix of $(A,B).$ Then%
\begin{equation}
\label{eq:DDR}
(\Delta _{A}\otimes \Delta _{B})(\Rr )=\Rr ^{i}\Rr %
^{p}\otimes \Rr ^{j}\Rr ^{s}\otimes \Rr _{p}\Rr %
_{s}\otimes \Rr _{i}\Rr _{j}.
\end{equation}%
\end{lemma}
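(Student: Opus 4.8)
The plan is to compute $(\Delta_A \otimes \Delta_B)(\Rr)$ by splitting it through the two weak hexagon equations \eqref{wtr2} and \eqref{wtr3}, applying them one at a time and keeping careful track of the tensor legs. First I would start from \eqref{wtr3}, which tells us that $(\Delta_A \otimes \mathrm{Id}_B)(\Rr) = \Rr_{13}\Rr_{23}$; written in component notation this reads $\Delta_A(\Rr^i) \otimes \Rr_i = \Rr^p \otimes \Rr^s \otimes \Rr_p \Rr_s$, where I introduce two independent copies of $\Rr$ (with dummy labels $p,s$) to expand the product $\Rr_{13}\Rr_{23}$.

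Next I would apply $\mathrm{Id}_A \otimes \mathrm{Id}_A \otimes \Delta_B$ to both sides of this identity, so that the rightmost leg $\Rr_i$ gets comultiplied. On the left this produces exactly $(\Delta_A \otimes \Delta_B)(\Rr)$ after reorganizing, while on the right the term $\Delta_B(\Rr_p \Rr_s)$ appears. Since $\Delta_B$ is an algebra map, $\Delta_B(\Rr_p \Rr_s) = \Delta_B(\Rr_p)\Delta_B(\Rr_s)$; here is where I would invoke \eqref{wtr2} in the form $\Delta_B(\Rr_i) = \Rr_i \otimes \Rr_i$ read off from $(\mathrm{Id}_A \otimes \Delta_B)(\Rr) = \Rr_{13}\Rr_{12}$, i.e. $\Rr^i \otimes \Delta_B(\Rr_i) = \Rr^i \Rr^j \otimes \Rr_j \otimes \Rr_i$. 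Substituting the two comultiplications and collecting the four legs should yield precisely the claimed expression $\Rr^i \Rr^p \otimes \Rr^j \Rr^s \otimes \Rr_p \Rr_s \otimes \Rr_i \Rr_j$.

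The main obstacle I anticipate is purely bookkeeping: with four independent copies of $\Rr$ in play (labels $i,j,p,s$) and two applications of the hexagon relations, the index matching is delicate, and it is easy to permute a leg incorrectly or to conflate the order of the multiplications $\Rr^i \Rr^j$ versus $\Rr^j \Rr^i$. I would therefore carry out the substitution in two clearly separated stages—first using \eqref{wtr3} to split the $A$-side, then \eqref{wtr2} to split each resulting $B$-leg—verifying at each stage that the free indices line up with the four tensor factors of the target. No genuine conceptual difficulty is expected beyond this careful leg-tracking, since both weak hexagon equations are hypotheses and $\Delta_A, \Delta_B$ being algebra maps is all the extra input required.
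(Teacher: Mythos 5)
Your proof is correct and takes essentially the same route as the paper's: factor $(\Delta_A\otimes\Delta_B)$ into two successive partial comultiplications, apply one weak hexagon equation at each stage, and use that the comultiplications are algebra maps to distribute over the resulting products. The paper merely performs the two steps in the opposite order (first \eqref{wtr2} on the $B$-leg, then \eqref{wtr3} on the $A$-leg), which is an immaterial difference.
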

\begin{proof}
We compute
\begin{align*}
(\Delta _{A}\otimes \Delta _{B})(\Rr ) &= (\Delta _{A}\otimes \mathrm{%
Id}_{B\otimes B})(\id \otimes \Delta _{B})(\Rr ) \\
&\overset{\mathclap{\eqref{wtr2}}}{=}(\Delta _{A}\otimes \id _{B\otimes B})(%
\Rr _{13}\Rr _{12}) \\
&=(\Delta _{A}\otimes \id _{B\otimes B})(\Rr _{13})(\Delta
_{A}\otimes \id _{B\otimes B})(\Rr _{12}) \\
&\overset{\mathclap{\eqref{wtr3}}}{=}(\Rr ^{i}\otimes \Rr ^{j}\otimes
1\otimes \Rr _{i}\Rr _{j})(\Rr ^{p}\otimes \Rr %
^{s}\otimes \Rr _{p}\Rr _{s}\otimes 1) \\
&=\Rr ^{i}\Rr ^{p}\otimes \Rr ^{j}\Rr %
^{s}\otimes \Rr _{p}\Rr _{s}\otimes \Rr _{i}\Rr %
_{j}.\qedhere
\end{align*}
\end{proof}

\subsection{Twists up to cohomology}
In what follows, we provide a classification (up to cohomology) for the twists on the tensor product bialgebra. This result will be relevant to later prove Theorem \ref{thm:Tw2cart}. Given bialgebras $H_{1},H_{2}$ and $\Ff %
\in H_{1}\otimes H_{2}\otimes H_{1}\otimes H_{2}$, we set
\begin{eqnarray*}
\Ff _{1} &\coloneqq&\left( \id \otimes \varepsilon \otimes \id %
\otimes \varepsilon \right) \left( \Ff \right) =\left( \id %
\otimes \varepsilon \right) \left( \Ff ^{j}\right) \otimes \left(
\id \otimes \varepsilon \right) \left( \Ff _{j}\right) \in
H_{1}\otimes H_{1}, \\
\Ff _{2} &\coloneqq&\left( \varepsilon \otimes \id \otimes
\varepsilon \otimes \id \right) \left( \Ff \right) =\left(
\varepsilon \otimes \id \right) \left( \Ff ^{j}\right) \otimes
\left( \varepsilon \otimes \id \right) \left( \Ff _{j}\right)
\in H_{2}\otimes H_{2}, \\
\Gg  &\coloneqq&\left( \id \otimes \varepsilon \otimes \varepsilon
\otimes \id \right) \left( \Ff \right) =\left( \id %
\otimes \varepsilon \right) \left( \Ff ^{j}\right) \otimes \left(
\varepsilon \otimes \id \right) \left( \Ff _{j}\right) \in
H_{1}\otimes H_{2}, \\
\Hh  &\coloneqq&\left( \varepsilon \otimes \id \otimes \id %
\otimes \varepsilon \right) \left( \Ff \right) =\left( \varepsilon
\otimes \id \right) \left( \Ff ^{j}\right) \otimes \left(
\id \otimes \varepsilon \right) \left( \Ff _{j}\right) \in
H_{2}\otimes H_{1}.
\end{eqnarray*}
We denote the datum $(\Ff_1,\Ff_2,\Gg,\Hh)$  by $\Phi(\Ff)$.

\begin{theorem}
\label{thm:twistot}
Let $H_{1}$ and $H_{2}$ be bialgebras and let $\Ff %
\in H_{1}\otimes H_{2}\otimes H_{1}\otimes H_{2}$ be a twist on the tensor product bialgebra $%
H_{1}\otimes H_{2}$. Let $(\Ff_1,\Ff_2,\Gg,\Hh):=\Phi(\Ff)$.

Then $\Ff _{1}$ and $\Ff _{2}$ are twists of $H_{1}$ and $H_{2}$, respectively, while $%
\Rr\coloneqq\Gg ^{\mathrm{op}}\Hh ^{-1}\in H_{2}\otimes H_{1}$ is a weak $\Rr$-matrix of $((H_2)_{\Ff_2},(H_1)_{\Ff_1})$.
Moreover, $\Ff^\Gg$ defined as in \eqref{def:F^h}, turns out to be equal to
\begin{equation*}
\left( 1_{H_{1}}\otimes
\Rr ^{-1}\otimes
1_{H_{2}}\right) \big(\left( \id _{H_{1}}\otimes \tau \otimes \id %
_{H_{2}}\right) \left( \Ff _{1}\otimes \Ff _{2}\right)\big), 
\end{equation*}%
and consequently $\Ff $ is cohomologous to it. 
\end{theorem}

\begin{proof}
Define $p_{1}:=\id \otimes \varepsilon :H_{1}\otimes H_{2}\rightarrow
H_{1}$ and $p_{2}:=\varepsilon \otimes \id :H_{1}\otimes
H_{2}\rightarrow H_{2}$.

We have $\left( p_{1}\otimes p_{2}\right) \Delta _{H_{1}\otimes H_{2}}=\mathrm{%
Id}_{H_{1}\otimes H_{2}}$ and $\left( p_{2}\otimes p_{1}\right) \Delta
_{H_{1}\otimes H_{2}}=\tau_{H_{1},H_{2}}.
$ By using these equalities and the fact that $p_{1}$ and $p_{2}$ are
bialgebra maps, we can evaluate $p_{i}\otimes p_{j}\otimes p_{k}$ for $%
ijk=111,112,121,122,211,212,221,222$ on both sides of
\begin{equation}
\label{eq:000}
\left( \Ff \otimes 1_{H_{1}\otimes H_{2}}\right) \left( \Delta
_{H_{1}\otimes H_{2}}\otimes \id _{H_{1}\otimes H_{2}}\right) \left(
\Ff \right) =\left( 1_{H_{1}\otimes H_{2}}\otimes \Ff \right)
\left( \id _{H_{1}\otimes H_{2}}\otimes \Delta _{H_{1}\otimes
H_{2}}\right) \left( \Ff \right)
\end{equation}
to get respectively%
\begin{align}
\label{eq:111} &\left( \Ff _{1}\otimes 1_{H_{1}}\right) \left( \Delta
_{H_{1}}\otimes \id _{H_{1}}\right) \left( \Ff _{1}\right)
=\left( 1_{H_{1}}\otimes \Ff _{1}\right) \left( \id %
_{H_{1}}\otimes \Delta _{H_{1}}\right) \left( \Ff _{1}\right)  \\
\label{eq:112} &\left( \Ff _{1}\otimes 1_{H_{2}}\right) \left( \Delta
_{H_{1}}\otimes \id _{H_{2}}\right) \left( \Gg \right) =\left(
1_{H_{1}}\otimes \Gg \right) \left( p_{1}\otimes \id %
_{H_{1}\otimes H_{2}}\right) \left( \Ff \right)  \\
\label{eq:121} &\left( \Gg \otimes 1_{H_{1}}\right) \left( \id %
_{H_{1}\otimes H_{2}}\otimes p_{1}\right) \left( \Ff \right) =\left(
1_{H_{1}}\otimes \Hh \right) \left( p_{1}\otimes \tau \right) \left(
\Ff \right)  \\
\label{eq:122} &\left( \Gg \otimes 1_{H_{2}}\right) \left( \id %
_{H_{1}\otimes H_{2}}\otimes p_{2}\right) \left( \Ff \right) =\left(
1_{H_{1}}\otimes \Ff _{2}\right) \left( \id _{H_{1}}\otimes
\Delta _{H_{2}}\right) \left( \Gg \right)  \\
\label{eq:211} &\left( \Hh \otimes 1_{H_{1}}\right) \left( \tau \otimes
p_{1}\right) \left( \Ff \right) =\left( 1_{H_{2}}\otimes \Ff %
_{1}\right) \left( \id _{H_{2}}\otimes \Delta _{H_{1}}\right) \left(
\Hh \right)  \\
\label{eq:212} &\left( \Hh \otimes 1_{H_{2}}\right) \left( \tau \otimes
p_{2}\right) \left( \Ff \right) =\left( 1_{H_{2}}\otimes \Gg %
\right) \left( p_{2}\otimes \id _{H_{1}\otimes H_{2}}\right) \left(
\Ff \right)  \\
\label{eq:221}&\left( \Ff _{2}\otimes 1_{H_{1}}\right) \left( \Delta
_{H_{2}}\otimes \id _{H_{1}}\right) \left( \Hh \right) =\left(
1_{H_{2}}\otimes \Hh \right) \left( p_{2}\otimes \tau \right) \left(
\Ff \right)  \\
\label{eq:222} &\left( \Ff _{2}\otimes 1_{H_{2}}\right) \left( \Delta
_{H_{2}}\otimes \id _{H_{2}}\right) \left( \Ff _{2}\right)
=\left( 1_{H_{2}}\otimes \Ff _{2}\right) \left( \id %
_{H_{2}}\otimes \Delta _{H_{2}}\right) \left( \Ff _{2}\right) .
\end{align}%
From \eqref{eq:111} and \eqref{eq:222}, together with $p_{1}$ and $p_{2}$ applied to $\left(
\varepsilon _{H_{1}\otimes H_{2}}\otimes \id _{H_{1}\otimes
H_{2}}\right) \left( \Ff \right) =1_{H_{1}\otimes H_{2}}=\left(
\id _{H_{1}\otimes H_{2}}\otimes \varepsilon _{H_{1}\otimes
H_{2}}\right) \left( \Ff \right) $, we obtain that $\Ff _{1}$ and $%
\Ff _{2}$ are twists.

From \eqref{eq:112}, 
we get%
\begin{align}
\label{eq:p1IdF}
&\left( p_{1}\otimes \id _{H_{1}\otimes H_{2}}\right) \left( \Ff\right)  =\left( 1_{H_{1}}\otimes \Gg ^{-1}\right) \left(\Ff_{1}\otimes 1_{H_{2}}\right) \left( \Delta _{H_{1}}\otimes \id %
_{H_{2}}\right) \left( \Gg \right) .
\end{align}%
Then, from \eqref{eq:121}, 
we obtain%
\begin{align*}
\left( \id _{H_{1}\otimes H_{2}}\otimes p_{1}\right) &\left( \Ff\right)  =\left( \Gg ^{-1}\otimes 1_{H_{1}}\right) \left(
1_{H_{1}}\otimes \Hh \right) \left( p_{1}\otimes \tau \right) \left(
\Ff \right)  
\\
&\overset{\eqref{eq:p1IdF}}=\left( \Gg ^{-1}\otimes 1_{H_{1}}\right) \left( 1_{H_{1}}\otimes
\Hh \tau \left( \Gg \right) ^{-1}\right) \big(\left( \id %
_{H_{1}}\otimes \tau \right) \left( \Ff _{1}\otimes 1_{H_{2}}\right)\big)
\big(\left( \id _{H_{1}}\otimes \tau \right) \left( \Delta _{H_{1}}\otimes
\id _{H_{2}}\right) \left( \Gg \right)\big).
\end{align*}%
Therefore, we have
\begin{align}
\label{eq:Idp1F}  &\left( \id _{H_{1}\otimes H_{2}}\otimes p_{1}\right) \left( \Ff\right)  =\left( \Gg ^{-1}\otimes 1_{H_{1}}\right) \left( 1_{H_{1}}\otimes
\Hh  \left( \Gg ^\op\right) ^{-1}\right) \big(\left( \id %
_{H_{1}}\otimes \tau \right) \left( \Ff _{1}\otimes 1_{H_{2}}\right)\big)
\big(\left( \id _{H_{1}}\otimes \tau \right) \left( \Delta _{H_{1}}\otimes
\id _{H_{2}}\right) \left( \Gg \right)\big).
\end{align}
Finally, from \eqref{eq:211}, 
we get%
\begin{align*}
\left( \Hh ^{-1}\otimes 1_{H_{1}}\right) \left( 1_{H_{2}}\otimes
\Ff _{1}\right) &\left( \id _{H_{2}}\otimes \Delta
_{H_{1}}\right) \left( \Hh \right)  
=\left( \tau \otimes p_{1}\right) \left( \Ff \right)  \\
&\overset{\mathclap{\eqref{eq:Idp1F}}}=
\left(\left( \Gg ^\op\right) ^{-1}\otimes 1_{H_{1}}\right) 
\left(
\tau \otimes \id _{H_{1}}\right) \left( 1_{H_{1}}\otimes \Hh %
 \left( \Gg ^\op\right) ^{-1}\right)  \\
&\hspace{0.5cm}\left( \tau \otimes \id _{H_{1}}\right) \left( \id %
_{H_{1}}\otimes \tau \right) \left( \Ff _{1}\otimes 1_{H_{2}}\right)\\
&\hspace{0.5cm}
\left( \tau \otimes \id _{H_{1}}\right) \left( \id %
_{H_{1}}\otimes \tau \right) \left( \Delta _{H_{1}}\otimes \id %
_{H_{2}}\right) \left( \Gg \right)  \\
&=\left( \left( \Gg ^\op\right) ^{-1}\otimes 1_{H_{1}}\right)
\Hh _{13} \left( \Gg ^\op\right) _{13}^{-1}\left(
1_{H_{2}}\otimes \Ff _{1}\right) \left( \id _{H_{2}}\otimes
\Delta _{H_{1}}\right)  \left( \Gg ^\op\right).
\end{align*}%
Hence%
\begin{eqnarray*}
\left( 1_{H_{2}}\otimes \Ff _{1}\right) \left( \id %
_{H_{2}}\otimes \Delta _{H_{1}}\right) \left[ \Hh  \left(
\Gg ^\op\right) ^{-1}\right]  &=&\left[ \Hh  \left( \Gg ^\op%
\right) ^{-1}\right] _{12}\left[ \Hh  \left( \Gg ^\op\right)
^{-1}\right] _{13}\left( 1_{H_{2}}\otimes \Ff _{1}\right).  
\end{eqnarray*}%
If we rewrite the above equality in terms of $\Gg ^{\mathrm{op}}\Hh ^{-1}$, we get the
weak hexagon equation
\begin{eqnarray*}
\left( \id _{H_{2}}\otimes \left( \Delta _{H_{1}}\right) _{\Ff %
_{1}}\right) \left( \Gg ^{\mathrm{op}}\Hh ^{-1}\right)
&=&\left( \Gg ^{\mathrm{op}}\Hh ^{-1}\right) _{13}\left(
\Gg ^{\mathrm{op}}\Hh ^{-1}\right) _{12}. \end{eqnarray*}
The other weak hexagon equation, namely,
\begin{eqnarray*}
\left( \left( \Delta _{H_{2}}\right) _{\Ff _{2}}\otimes \id %
_{H_{1}}\right) \left( \Gg ^{\mathrm{op}}\Hh ^{-1}\right)
&=&\left( \Gg ^{\mathrm{op}}\Hh ^{-1}\right) _{13}\left(
\Gg ^{\mathrm{op}}\Hh ^{-1}\right) _{23},
\end{eqnarray*}
can be obtained in a similar way by employing  \eqref{eq:122}, \eqref{eq:212} and \eqref{eq:221}. Hence $\Rr:=\Gg^{\mathrm{op}}\Hh^{-1}$ is a weak $\Rr$-matrix of $((H_{2})_{\Ff_{2}},(H_{1})_{\Ff_{1}})$.
Note that the counterpart of \eqref{eq:Idp1F} is 
\begin{align*}
 \label{eq:p2IdF}   \left( p_{2}\otimes \id _{H_{1}\otimes H_{2}}\right) \left( \Ff\right)&=\left( 1_{H_{2}}\otimes \Gg ^{-1}\right) \left( \Hh 
 \left( \Gg ^\op\right) ^{-1}\otimes 1_{H_{2}}\right) \left( \tau \otimes
 \id _{H_{2}}\right) \left( 1_{H_{1}}\otimes \Ff _{2}\right)
 \left( \tau \otimes \id _{H_{2}}\right) \left( \id _{H_{1}}\otimes \Delta _{H_{2}}\right) \left( \Gg \right) 
 \end{align*} 
from which we get
\begin{eqnarray*}
  1_{H_{1}}\otimes \left( p_{2}\otimes \id _{H_{1}\otimes
H_{2}}\right) \left( \Ff \right) 
&=&
\left( 1_{H_{1}\otimes H_{2}}\otimes \Gg ^{-1}\right) \left(
1_{H_{1}}\otimes \Hh  \left( \Gg ^\op\right) ^{-1}\otimes
1_{H_{2}}\right)  \\
&&\left( \id _{H_{1}}\otimes \tau \otimes \id _{H_{2}}\right)
\left( 1_{H_{1}}\otimes 1_{H_{1}}\otimes \Ff _{2}\right)  \\
&&\left( \id _{H_{1}}\otimes \tau \otimes \id _{H_{2}}\right)
\left( 1_{H_{1}}\otimes \left( \id _{H_{1}}\otimes \Delta
_{H_{2}}\right) \left( \Gg \right) \right) .
\end{eqnarray*}%
We also have
\begin{align*}
\left( \id _{H_{1}}\otimes \tau \otimes \id _{H_{2}}\right)
&\left( \id _{H_{1}\otimes H_{1}}\otimes \Delta _{H_{2}}\right) \left(
p_{1}\otimes \id _{H_{1}\otimes H_{2}}\right) \left( \Ff %
\right)\\
&\overset{\mathclap{\eqref{eq:p1IdF}} }=
\left( \id _{H_{1}}\otimes \tau \otimes \id _{H_{2}}\right)
\left( \id _{H_{1}\otimes H_{1}}\otimes \Delta _{H_{2}}\right) \left(
1_{H_{1}}\otimes \Gg ^{-1}\right)  \\
&\hspace{.5cm}\left( \id _{H_{1}}\otimes \tau \otimes \id _{H_{2}}\right)
\left( \id _{H_{1}\otimes H_{1}}\otimes \Delta _{H_{2}}\right) \left(
\Ff _{1}\otimes 1_{H_{2}}\right)  \\
&\hspace{.5cm}\left( \id _{H_{1}}\otimes \tau \otimes \id _{H_{2}}\right)
\left( \id _{H_{1}\otimes H_{1}}\otimes \Delta _{H_{2}}\right) \left(
\Delta _{H_{1}}\otimes \id _{H_{2}}\right) \left( \Gg \right) \\
&=
\left( \id _{H_{1}}\otimes \tau \otimes \id _{H_{2}}\right)
\left(
1_{H_{1}}\otimes (\id _{H_{1}}\otimes \Delta _{H_{2}})(\Gg ^{-1})\right)  \\
&\hspace{.5cm}\big(\left( \id _{H_{1}}\otimes \tau \otimes \id _{H_{2}}\right)
\left( \Ff _{1}\otimes 1_{H_{2}}\otimes 1_{H_{2}}\right) \big)
\Delta _{H_{1}\otimes H_{2}}\left( \Gg \right). 
\end{align*}%
Now, if we evaluate $p_{1}\otimes p_{2}\otimes \id _{H_{1}\otimes
H_{2}}$ on \eqref{eq:000},
 we obtain%
\begin{equation*}
\left( \Gg \otimes 1_{H_{1}\otimes H_{2}}\right) \Ff =\left(
1_{H_{1}}\otimes \left( p_{2}\otimes \id _{H_{1}\otimes H_{2}}\right)
\left( \Ff \right) \right) \left( \id _{H_{1}}\otimes \tau
\otimes \id _{H_{2}}\right) \left( \id _{H_{1}\otimes
H_{1}}\otimes \Delta _{H_{2}}\right) \left( p_{1}\otimes \id %
_{H_{1}\otimes H_{2}}\right) \left( \Ff \right)
\end{equation*}%
from which, by employing the two equalities obtained above, we get
\begin{eqnarray*}
\Ff  &=&
\left( \Gg ^{-1}\otimes 1_{H_{1}\otimes H_{2}}\right) 
\left( 1_{H_{1}}\otimes \left( p_{2}\otimes \id _{H_{1}\otimes
H_{2}}\right) \left( \Ff \right) \right)  \\
&&\left( \id _{H_{1}}\otimes \tau \otimes \id _{H_{2}}\right)
\left( \id _{H_{1}\otimes H_{1}}\otimes \Delta _{H_{2}}\right) \left(
p_{1}\otimes \id _{H_{1}\otimes H_{2}}\right) \left( \Ff %
\right)\\
&=&
\left( \Gg ^{-1}\otimes 1_{H_{1}\otimes H_{2}}\right)  
\left( 1_{H_{1}\otimes H_{2}}\otimes \Gg ^{-1}\right) \left(
1_{H_{1}}\otimes \Hh  \left( \Gg ^\op\right) ^{-1}\otimes
1_{H_{2}}\right)  \\
&&\left( \id _{H_{1}}\otimes \tau \otimes \id _{H_{2}}\right)
\left( 1_{H_{1}}\otimes 1_{H_{1}}\otimes \Ff _{2}\right)  \\
&&\left( \id _{H_{1}}\otimes \tau \otimes \id _{H_{2}}\right)
\left( 1_{H_{1}}\otimes \left( \id _{H_{1}}\otimes \Delta
_{H_{2}}\right) \left( \Gg \right) \right)  \\
&&\left( \id _{H_{1}}\otimes \tau \otimes \id _{H_{2}}\right)
\left(
1_{H_{1}}\otimes (\id _{H_{1}}\otimes \Delta _{H_{2}})(\Gg ^{-1})\right) \\
&&\big(\left( \id _{H_{1}}\otimes \tau \otimes \id _{H_{2}}\right)
\left( \Ff _{1}\otimes 1_{H_{2}}\otimes 1_{H_{2}}\right)\big)  
\Delta _{H_{1}\otimes H_{2}}\left( \Gg \right)\\
&=&
\left( \Gg ^{-1}\otimes \Gg ^{-1}\right)  
\left( 1_{H_{1}}\otimes \Hh \left( \Gg ^\op\right)
^{-1}\otimes 1_{H_{2}}\right)  \\
&&\left( \id _{H_{1}}\otimes \tau \otimes \id _{H_{2}}\right)
\left( 1_{H_{1}}\otimes 1_{H_{1}}\otimes \Ff _{2}\right)  \\
&&\big(\left( \id _{H_{1}}\otimes \tau \otimes \id _{H_{2}}\right)
\left( \Ff _{1}\otimes 1_{H_{2}}\otimes 1_{H_{2}}\right)\big)  
\Delta _{H_{1}\otimes H_{2}}\left( \Gg \right)\\
&=&\left( \Gg ^{-1}\otimes \Gg ^{-1}\right) \left(
1_{H_{1}}\otimes \Hh  \left( \Gg ^\op\right) ^{-1}\otimes
1_{H_{2}}\right) \big(\left( \id _{H_{1}}\otimes \tau \otimes \id %
_{H_{2}}\right) \left( \Ff _{1}\otimes \Ff _{2}\right)\big) \Delta
_{H_{1}\otimes H_{2}}\left( \Gg \right).
\end{eqnarray*}%
Thus%
\begin{equation*}
\Ff =\left( \Gg ^{-1}\otimes \Gg ^{-1}\right) \left(
1_{H_{1}}\otimes \Hh  \left( \Gg ^\op\right) ^{-1}\otimes
1_{H_{2}}\right) \big(\left( \id _{H_{1}}\otimes \tau \otimes \id %
_{H_{2}}\right) \left( \Ff _{1}\otimes \Ff _{2}\right)\big) \Delta
_{H_{1}\otimes H_{2}}\left( \Gg \right),
\end{equation*}%
hence $\Ff $ is cohomologous to
\begin{equation*}
\Ff^\Gg=\left( \Gg \otimes \Gg \right) \Ff \Delta
_{H_{1}\otimes H_{2}}\left( \Gg ^{-1}\right) =\left( 1_{H_{1}}\otimes
\Rr^{-1}\otimes 1_{H_{2}}\right)
\big(\left( \id _{H_{1}}\otimes \tau \otimes \id _{H_{2}}\right)
\left( \Ff _{1}\otimes \Ff _{2}\right) \big).\qedhere
\end{equation*}
\end{proof}

We are also able to demonstrate a sort of converse of the previous statement.

\begin{lemma}\label{lem:Drinfeldtwist}
    Let $H_{1}$ and $H_{2}$ be bialgebras, $\Ff _{1}\in H_{1}\otimes H_{1}$ and $\Ff _{2}\in H_{2}\otimes H_{2}$ be twists on $H_{1}$ and $H_{2}$, respectively, and $\Rr \in H_{2}\otimes H_{1}$ be a weak $\Rr $-matrix of $((H_2)_{\Ff_2},(H_1)_{\Ff_1})$. Then, 
\[
\Ff :=\left( 1_{H_{1}}\otimes\Rr ^{-1}\otimes
1_{H_{2}}\right) \big(\left( \id _{H_{1}}\otimes \tau \otimes \id %
_{H_{2}}\right) \left( \Ff _{1}\otimes \Ff _{2}\right)\big)
\]
is a twist on $H_{1}\otimes H_{2}$. 
\end{lemma}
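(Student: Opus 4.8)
The plan is to verify directly that $\mathcal{F}$ satisfies the three conditions of \cref{def:twist}, organising the computation so that the two ingredients $\mathcal{F}_1\otimes\mathcal{F}_2$ and $\mathcal{R}^{-1}$ are handled separately. Invertibility is immediate: $\mathcal{F}$ is a product of the invertible elements $1_{H_1}\otimes\mathcal{R}^{-1}\otimes 1_{H_2}$ and $(\mathrm{Id}_{H_1}\otimes\tau\otimes\mathrm{Id}_{H_2})(\mathcal{F}_1\otimes\mathcal{F}_2)$, whose inverses are $1_{H_1}\otimes\mathcal{R}\otimes 1_{H_2}$ and $(\mathrm{Id}_{H_1}\otimes\tau\otimes\mathrm{Id}_{H_2})(\mathcal{F}_1^{-1}\otimes\mathcal{F}_2^{-1})$. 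The normalization \eqref{norm} follows by applying $\varepsilon_{H_1}\otimes\varepsilon_{H_2}$ to either outer leg: the factors $\mathcal{F}_1,\mathcal{F}_2$ collapse by \eqref{norm} for $\mathcal{F}_1$ and $\mathcal{F}_2$, while $\mathcal{R}^{-1}$ collapses by the counit identities $(\varepsilon\otimes\mathrm{Id})(\mathcal{R}^{-1})=1$ and $(\mathrm{Id}\otimes\varepsilon)(\mathcal{R}^{-1})=1$ for a weak $\mathcal{R}$-matrix, see \cite[Lemma 1.2]{Chen-quasi}.

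The real content is the $2$-cocycle condition \eqref{2-cocy}. I would write $\mathcal{F}=A\cdot\Phi$ with $\Phi:=(\mathrm{Id}_{H_1}\otimes\tau\otimes\mathrm{Id}_{H_2})(\mathcal{F}_1\otimes\mathcal{F}_2)$ and $A:=1_{H_1}\otimes\mathcal{R}^{-1}\otimes 1_{H_2}$, and then invoke the standard principle that twists compose: if $\Phi$ is a twist on a bialgebra $B$ and $A$ is a twist on $B_\Phi$, then $A\Phi$ is a twist on $B$ (with $B_{A\Phi}=(B_\Phi)_A$); this is readily checked from \eqref{2-cocy} and \eqref{norm}. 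Taking $B=H_1\otimes H_2$, it then suffices to prove (i) that $\Phi$ is a twist on $H_1\otimes H_2$ with $(H_1\otimes H_2)_\Phi=(H_1)_{\mathcal{F}_1}\otimes(H_2)_{\mathcal{F}_2}$, and (ii) that $A$ is a twist on $(H_1)_{\mathcal{F}_1}\otimes(H_2)_{\mathcal{F}_2}$.

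For (i), I would use the algebra isomorphism $(H_1\otimes H_2)^{\otimes n}\cong H_1^{\otimes n}\otimes H_2^{\otimes n}$ that reorders the legs; under it $\Phi$ corresponds to $\mathcal{F}_1\otimes\mathcal{F}_2$ and $\Delta_{H_1\otimes H_2}$ to a reordering of $\Delta_{H_1}\otimes\Delta_{H_2}$. Since the $H_1$-legs and the $H_2$-legs then lie in commuting tensor factors and $\mathcal{F}_1,\mathcal{F}_2$ are twists, both \eqref{2-cocy} and \eqref{norm} for $\Phi$ factor as the product of the corresponding conditions for $\mathcal{F}_1$ and $\mathcal{F}_2$, and the same reordering shows that conjugation by $\Phi$ twists the two tensor factors independently, yielding $(H_1\otimes H_2)_\Phi=(H_1)_{\mathcal{F}_1}\otimes(H_2)_{\mathcal{F}_2}$. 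Equivalently, one realises $\Phi$ as the product of the two commuting pushed-forward twists $(\iota_1\otimes\iota_1)(\mathcal{F}_1)$ and $(\iota_2\otimes\iota_2)(\mathcal{F}_2)$ along the canonical bialgebra injections $\iota_1\colon H_1\to H_1\otimes H_2,\ h\mapsto h\otimes 1_{H_2}$ and $\iota_2\colon H_2\to H_1\otimes H_2,\ h\mapsto 1_{H_1}\otimes h$, and concludes via the composition principle once more.

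The main obstacle is (ii). Writing $K:=(H_1)_{\mathcal{F}_1}$ and $L:=(H_2)_{\mathcal{F}_2}$, so that $\mathcal{R}\in L\otimes K$ is a weak $\mathcal{R}$-matrix of $(L,K)$ and $A=1_K\otimes\mathcal{R}^{-1}\otimes 1_L\in(K\otimes L)^{\otimes 2}$, I would expand both sides of \eqref{2-cocy} for $A$ in $(K\otimes L)^{\otimes 3}$. Because the unit fillers trivialise the two outermost legs and the $K$-legs commute with the $L$-legs, the left-hand side reduces to an expression built from $(\Delta_L\otimes\mathrm{Id}_K)(\mathcal{R}^{-1})$ and the right-hand side to one built from $(\mathrm{Id}_L\otimes\Delta_K)(\mathcal{R}^{-1})$. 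Inverting the weak hexagon equations \eqref{wtr2} and \eqref{wtr3} for $\mathcal{R}$ gives $(\mathrm{Id}_L\otimes\Delta_K)(\mathcal{R}^{-1})=(\mathcal{R}^{-1})_{12}(\mathcal{R}^{-1})_{13}$ and $(\Delta_L\otimes\mathrm{Id}_K)(\mathcal{R}^{-1})=(\mathcal{R}^{-1})_{23}(\mathcal{R}^{-1})_{13}$, where the comultiplications are the twisted ones $\Delta_K=(\Delta_{H_1})_{\mathcal{F}_1}$ and $\Delta_L=(\Delta_{H_2})_{\mathcal{F}_2}$. Substituting these two identities, both sides of \eqref{2-cocy} for $A$ collapse (after relabelling summation indices) to one and the same element of $(K\otimes L)^{\otimes 3}$, which establishes the cocycle condition. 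The delicate points are the bookkeeping of the interleaved $K$- and $L$-legs and the fact that one must consistently use the \emph{twisted} comultiplications of $K$ and $L$, rather than those of $H_1$ and $H_2$, since it is precisely these that make the weak hexagon equations for $\mathcal{R}$ available. Combining (i), (ii) and the composition principle shows that $\mathcal{F}=A\Phi$ is a twist on $H_1\otimes H_2$.
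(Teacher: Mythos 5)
Your proof is correct, but it is organised quite differently from the paper's. The paper verifies the $2$-cocycle identity for $\mathcal{F}$ by a single long direct computation in leg notation, invoking \eqref{wtr2}, \eqref{wtr3} and \eqref{2-cocy} at the appropriate places. You instead factor $\mathcal{F}=A\Phi$ and reduce to: (a) the composition principle for twists ($\Phi$ a twist on $B$ and $A$ a twist on $B_\Phi$ imply $A\Phi$ is a twist on $B$), which is indeed routine to check and is implicitly what makes the composition \eqref{eq:compo} in $\TwBialg$ well defined; (b) the fact that $\Phi$ is a twist on $H_1\otimes H_2$ with $(H_1\otimes H_2)_\Phi=(H_1)_{\Ff_1}\otimes(H_2)_{\Ff_2}$; and (c) the fact that $1\otimes\Rr^{-1}\otimes 1$ is a twist on $(H_1)_{\Ff_1}\otimes(H_2)_{\Ff_2}$, which is essentially the ``if'' direction of \cref{cor:weakastwist} (applied to the twisted bialgebras) that the paper instead derives as a consequence of this lemma. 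Your computation in (c) is sound: both sides of the cocycle condition collapse to a product of three copies of $\Rr^{-1}$ placed in the legs $(2,3)$, $(4,5)$ and $(2,5)$, and the two copies in disjoint legs commute; the point you flag about having to use the twisted comultiplications $(\Delta_{H_i})_{\Ff_i}$ is exactly the right one, since it is for these that the weak hexagon equations of $\Rr$ are available. What your route buys is modularity and a transparent explanation of why the hypothesis concerns $((H_2)_{\Ff_2},(H_1)_{\Ff_1})$ rather than $(H_2,H_1)$; what the paper's route buys is self-containedness at the cost of a heavier computation. Two small points to tighten: the composition principle is nowhere stated explicitly in the paper, so it should be recorded and proved (it is a two-line consequence of \eqref{2-cocy}); and the counit identities for $\Rr^{-1}$ that you cite from \cite[Lemma 1.2]{Chen-quasi} are there stated for $\Rr$, so you should note that they transfer to $\Rr^{-1}$ by applying $\varepsilon\otimes\mathrm{Id}$ and $\mathrm{Id}\otimes\varepsilon$ to $\Rr\Rr^{-1}=1\otimes 1$, or by invoking \cref{lem:weakop}.
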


\begin{proof}
    We compute
\begin{align*}
   (1_{H_{1}\otimes H_{2}}&\otimes\Ff ) (\id _{H_{1}\otimes H_{2}}\otimes\Delta_{H_{1}\otimes H_{2}})(\Ff )\\
   &=(1_{H_{1}\otimes H_{2}}\otimes1_{H_{1}}\otimes\overline{\Rr}^{l}\ot\overline{\Rr}_{l}\otimes1_{H_{2}})(1_{H_{1}\otimes H_{2}}\otimes\Ff ^i_{1}\otimes\Ff _2^j\otimes\Ff _{1_i}\otimes\Ff _{2_j})\\
   &\hspace{.5cm}(1_{H_{1}}\otimes\overline{\Rr }^{j}\otimes\overline{\Rr }_{j_1}\otimes1_{H_{2}}\otimes\overline{\Rr }_{j_2}\otimes1_{H_{2}})(\Ff _1^s\otimes\Ff _2^t\otimes(\Ff _{1_{s}})_{1}\otimes(\Ff _{2_{t}})_{1}\otimes(\Ff _{1_{s}})_{2}\otimes(\Ff _{2_{t}})_{2})\\
   &\overset{\mathclap{\eqref{wtr2}}}{=}\Ff _1^s\otimes\overline{\Rr }^i\overline{\Rr }^j\Ff _2^t\otimes\Ff _1^i\overline{\Ff _1}^i\overline{\Rr }_i\Ff _1^{j}(\Ff _{1_{s}})_{1}\otimes\overline{\Rr }^l\Ff _2^j(\Ff _{2_{t}})_{1}\otimes\overline{\Rr }_l\Ff _{1_{i}}\overline{\Ff _1}_i\overline{\Rr }_j\Ff _{1_{j}}(\Ff _{1_s})_{2}\otimes\Ff _{2_{j}}(\Ff _{2_{t}})_{2}\\
   &=\Ff _1^s\otimes\overline{\Rr }^i\overline{\Rr }^j\Ff _2^t\otimes\overline{\Rr }_i\Ff _1^{j}(\Ff _{1_{s}})_{1}\otimes\overline{\Rr }^l\Ff _2^j(\Ff _{2_{t}})_{1}\otimes\overline{\Rr }_l\overline{\Rr }_j\Ff _{1_{j}}(\Ff _{1_s})_{2}\otimes\Ff _{2_{j}}(\Ff _{2_{t}})_{2}\\
   &=\Ff _1^s\otimes\overline{\Rr }^l\overline{\Rr }^j\Ff _2^t\otimes\overline{\Rr }_{l}\Ff _{1}^{i}(\Ff _{1_{s}})_{1}\otimes\overline{\Rr }^i\Ff _{2}^{j}(\Ff _{2_{t}})_{1}\otimes\overline{\Rr }_i\overline{\Rr }_j\Ff _{1_{i}}(\Ff _{1_{s}})_{2}\otimes\Ff _{2_{j}}(\Ff _{2_{t}})_{2}\\
   &\overset{\mathclap{\eqref{2-cocy}}}{=}\Ff _1^s\otimes\overline{\Rr }^l\overline{\Rr }^j\Ff _2^{l}(\Ff _2^t)_{1}\otimes\overline{\Rr }_{l}\Ff _{1}^{i}(\Ff _{1_{s}})_{1}\otimes\overline{\Rr }^i\Ff _{2_{l}}(\Ff _2^t)_{2}\otimes\overline{\Rr }_i\overline{\Rr }_j\Ff _{1_{i}}(\Ff _{1_{s}})_{2}\otimes\Ff _{2_{t}}\\
   &\overset{\mathclap{\eqref{2-cocy}}}{=}\Ff _1^i(\Ff _1^s)_{1}\otimes\overline{\Rr }^l\overline{\Rr }^j\Ff _2^{l}(\Ff _2^t)_{1}\otimes\overline{\Rr }_{l}\Ff _{1_{i}}(\Ff _1^s)_{2}\otimes\overline{\Rr }^i\Ff _{2_{l}}(\Ff _2^t)_{2}\otimes\overline{\Rr }_i\overline{\Rr }_j\Ff _{1_{s}}\otimes\Ff _{2_{t}}\\
   &=\Ff _1^i(\Ff _1^s)_{1}\otimes\overline{\Rr }^l\Ff _2^j\overline{\Ff _2}^j\overline{\Rr }^j\Ff _2^{l}(\Ff _2^t)_{1}\otimes\overline{\Rr }_{l}\Ff _{1_{i}}(\Ff _1^s)_{2}\otimes\Ff _{2_{j}}\overline{\Ff _2}_j\overline{\Rr }^i\Ff _{2_{l}}(\Ff _2^t)_{2}\otimes\overline{\Rr }_i\overline{\Rr }_j\Ff _{1_{s}}\otimes\Ff _{2_{t}}\\
   &\overset{\mathclap{\eqref{wtr3}}}{=}(1_{H_{1}}\otimes\Rr ^{-1}\otimes 1_{H_{2}}\otimes1_{H_{1}\otimes H_{2}})(\Ff _1^i\otimes\Ff _2^j\otimes\Ff _{1_{i}}\otimes\Ff _{2_{j}}\otimes1_{H_{1}\otimes H_{2}})\\
   &\hspace{.5cm}(1_{H_{1}}\otimes\overline{\Rr }_1^j\otimes1_{H_{1}}\otimes\overline{\Rr }^j_2\otimes\overline{\Rr }_j\otimes1_{H_{2}})((\Ff _1^s)_{1}\otimes(\Ff _2^t)_{1}\otimes(\Ff _1^s)_{2}\otimes(\Ff _2^t)_{2}\otimes\Ff _{1_{s}}\otimes\Ff _{2_{t}})\\
   &=(\Ff \otimes1_{H_{1}\otimes H_{2}})(\Delta_{H_{1}\otimes H_{2}}\otimes\id _{H_{1}\otimes H_{2}})(\Ff ).
\end{align*}
Moreover, we have    
\[
\begin{split}
(\id _{H_{1}\otimes H_{2}}&\otimes\varepsilon_{H_{1}\otimes H_{2}})(\Ff )=\\
&=(1_{H_{1}}\otimes(\id \otimes\varepsilon_{H_{1}})(\Rr ^{-1}))(\id _{H_{1}}\otimes\id _{H_{2}}\otimes\varepsilon_{H_{1}}\otimes\varepsilon_{H_{2}})(\id _{H_{1}}\otimes\tau\otimes\id _{H_{2}})(\Ff _{1}\otimes\Ff _{2})\\&=(\id _{H_{1}}\otimes\varepsilon_{H_{1}}\otimes\id _{H_{2}}\otimes\varepsilon_{H_{2}})(\Ff _{1}\otimes\Ff _{2})\overset{\eqref{norm}}{=}1_{H_{1}\otimes H_{2}}.
\end{split}
\]
Similarly, $(\varepsilon_{H_{1}\otimes H_{2}}\otimes\id _{H_{1}\otimes H_{2}})(\Ff )=1_{H_{1}\otimes H_{2}}$ and then $\Ff $ is a twist of $H_{1}\otimes H_{2}$.
\end{proof}

\begin{remark}
By Lemma \ref{lem:Drinfeldtwist}, if $H_{1}=H_{2}:=H$ and $\Ff_{2}=1_{H}\ot1_{H}=\Rr$, one has that $\Ff_{1}^{i}\ot1_{H}\ot\Ff_{1_{i}}\ot1_{H}$ is a twist on $H\ot H$. Similarly, if $\Ff_{1}=1_{H}\ot1_{H}=\Rr$, one obtains that $1_{H}\ot\Ff_{2}^{i}\ot 1_{H}\ot\Ff_{2_{i}}$ is a twist on $H\ot H$. We observe that in \cite[Lemma 1]{Lomp} it is proven that, if $\Ff^{i}\ot1_{H}\ot1_{H}\ot\Ff_{i}$ is a twist on $H\ot H$ for a given twist $\Ff$ on $H$, then $(e_{i}^{m}\ot e^{m}_{j})(\Ff)$ is a twist for $H^{\ot m}$ for any $1\leq i<j\leq m$, where $e^{m}_{i}:H\to H^{\ot m}$ denotes the embedding of $H$ into the $i$th tensorand of $H^{\ot m}$, so e.g.\ $e_2^4(x)=1_H\ot x\ot 1_H\ot 1_H$. These special twists will be mentioned again in Remark \ref{rmk:PanseraLomp}. We point out that in \cite{Lomp} the notion of twist is dual with respect to Definition \ref{def:twist}.
\end{remark}


As an immediate consequence of the previous results, we get the following characterization of weak $\Rr$-matrices.

\begin{corollary}
\label{cor:weakastwist}
Let $H_1$ and $H_2$ be bialgebras. Then $\Rr\in H_2\otimes H_1$ is a weak $\Rr$-matrix of $(H_2,H_1)$ if and only if $1_{H_1}\otimes\Rr^{-1}\otimes 1_{H_2}$ is a twist on the tensor product bialgebra $H_1\otimes H_2$.
\end{corollary}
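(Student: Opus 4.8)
The plan is to derive both implications from the two preceding results, \cref{thm:twistot} and \cref{lem:Drinfeldtwist}, by specializing them to the case in which the auxiliary twists $\Ff_1$ and $\Ff_2$ are trivial, i.e. $\Ff_1=1_{H_1}\otimes 1_{H_1}$ and $\Ff_2=1_{H_2}\otimes 1_{H_2}$. I would first record that for these trivial twists one has $\Delta_{\Ff_i}=\Delta$, so that $(H_1)_{\Ff_1}=H_1$ and $(H_2)_{\Ff_2}=H_2$; in particular, a weak $\Rr$-matrix of $((H_2)_{\Ff_2},(H_1)_{\Ff_1})$ is precisely a weak $\Rr$-matrix of $(H_2,H_1)$. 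This observation is what lets both quoted statements specialize to the situation of the corollary.

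For the implication ``$\Rr$ weak $\Rr$-matrix $\Rightarrow$ $1_{H_1}\otimes\Rr^{-1}\otimes 1_{H_2}$ twist'', I would simply feed $\Rr$ together with the trivial twists $\Ff_1,\Ff_2$ into \cref{lem:Drinfeldtwist}. Since $(\mathrm{Id}_{H_1}\otimes\tau\otimes\mathrm{Id}_{H_2})(\Ff_1\otimes\Ff_2)=1_{H_1}\otimes 1_{H_2}\otimes 1_{H_1}\otimes 1_{H_2}=1_{H_1\otimes H_2}\otimes 1_{H_1\otimes H_2}$, the twist produced by the lemma collapses to exactly $1_{H_1}\otimes\Rr^{-1}\otimes 1_{H_2}$, which is therefore a twist on $H_1\otimes H_2$.

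For the converse, I would apply \cref{thm:twistot} to the twist $\Ff:=1_{H_1}\otimes\Rr^{-1}\otimes 1_{H_2}$ and identify the four contractions $\Ff_1,\Ff_2,\Gg,\Hh$ attached to it. Writing $\Rr^{-1}=\overline{\Rr}^i\otimes\overline{\Rr}_i\in H_2\otimes H_1$, a direct evaluation of the projections gives $\Hh=\Rr^{-1}$ outright, while the remaining three contractions are controlled by the counit of $\Rr^{-1}$ on one leg. The normalization \eqref{norm} of $\Ff$, split along $\varepsilon_{H_1\otimes H_2}=\varepsilon_{H_1}\otimes\varepsilon_{H_2}$, unwinds precisely to $(\varepsilon_{H_2}\otimes\mathrm{Id}_{H_1})(\Rr^{-1})=1_{H_1}$ and $(\mathrm{Id}_{H_2}\otimes\varepsilon_{H_1})(\Rr^{-1})=1_{H_2}$; these force $\Ff_1=1_{H_1}\otimes 1_{H_1}$, $\Ff_2=1_{H_2}\otimes 1_{H_2}$ and $\Gg=1_{H_1\otimes H_2}$. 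Consequently the weak $\Rr$-matrix produced by the theorem is $\Gg^{\mathrm{op}}\Hh^{-1}=1_{H_2\otimes H_1}\cdot\Rr=\Rr$, and by the theorem it is a weak $\Rr$-matrix of $((H_2)_{\Ff_2},(H_1)_{\Ff_1})=(H_2,H_1)$, as required.

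I do not expect a genuine obstacle, since both directions are specializations of already-established statements and the only computation is the evaluation of the four contractions, which is routine once \eqref{norm} is translated into the two counit conditions on $\Rr^{-1}$. The one point that deserves care is the bookkeeping of the tensor factors and of the flip $\tau$: specifically, one must verify that $\Gg$ genuinely collapses to the unit, so that $\Gg^{\mathrm{op}}$ drops out of $\Gg^{\mathrm{op}}\Hh^{-1}$ and leaves $\Rr$ rather than a nontrivial scalar or factor.
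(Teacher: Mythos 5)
Your proposal is correct and follows essentially the same route as the paper's proof: the forward direction by applying \cref{lem:Drinfeldtwist} with $\Ff_1=1_{H_1}\otimes 1_{H_1}$ and $\Ff_2=1_{H_2}\otimes 1_{H_2}$, and the converse by applying \cref{thm:twistot} to $\Ff=1_{H_1}\otimes\Rr^{-1}\otimes 1_{H_2}$ and computing that $\Ff_1,\Ff_2,\Gg$ are trivial and $\Hh=\Rr^{-1}$, so that $\Gg^{\mathrm{op}}\Hh^{-1}=\Rr$. The paper states these identifications without the explicit verification via \eqref{norm} that you supply, but the argument is the same.
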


\begin{proof}
If $\Rr$ is a weak $\Rr$-matrix, we can take $\Ff_1=1_{H_1}\otimes 1_{H_1}$ and  $\Ff_2=1_{H_2}\otimes 1_{H_2}$ and apply Lemma \ref{lem:Drinfeldtwist}. Conversely, if $\Ff\coloneqq 1_{H_1}\otimes\Rr^{-1}\otimes 1_{H_2}$ is a twist, we can use Theorem \ref{thm:twistot} to get that $\Ff_1=1_{H_1}\otimes1_{H_1}$, $\Ff_2=1_{H_2}\otimes 1_{H_2}$ $\Gg= 1_{H_1}\otimes1_{H_2}$, $\Hh=\Rr^{-1}$ and that $\Gg^\op\Hh^{-1}=\Rr$ is a weak $\Rr$-matrix.
\end{proof}

\subsection{Quasitriangular structures up to cohomology}
Since any quasitriangular structure is a special case of a twist, we can apply Theorem \ref{thm:twistot} to classify quasitriangular structures on the tensor product bialgebra up to cohomology. 
First, we show the following result.
\begin{lemma}
    Let $(H,\Rr)$ be a quasitriangular bialgebra. Given $h\in H^{\times}$, we set 
    \[\Rr'\coloneq (1\ot 1)^{h}=(h\ot h)\Delta(h^{-1})\in H\ot H.\] 
    Then, the following statements hold:
    \begin{itemize}
        \item[$1)$] Both $h$ and $\Rr'$ are central if and only if so is $\Delta(h).$
      \item[$2)$] If $h$ is central and $\Rr'$ is a central weak $\Rr$-matrix of $(H,H)$, then $\Rr^{h}=(h\ot h)\Rr\Delta(h^{-1})$ is a quasitriangular structure for $H$.
        \end{itemize}
\end{lemma}
\begin{proof}
1) If $h$ is central, then $\Rr'$ is central if and only if $\Delta(h)$ is central. Moreover, $\Delta(h)$ central implies $h$ central as $ha= h_1 a_1 \varepsilon(h_2 a_2) = a_1 h_1 \varepsilon(a_2 h_2) = ah$, for all $a\in H$.

2) Clearly $\Rr^h$ is invertible. Since $h$ is central, we obtain
\[
\Rr^{h}=(h\ot h)\Rr\Delta(h^{-1})=\Rr(h\ot h)\Delta(h^{-1})=\Rr\Rr'
\]
and also
\[
\Rr'\Delta(x)(\Rr')^{-1}
=(h\ot h)\Delta(h^{-1})\Delta(x)\Delta(h)(h^{-1}\ot h^{-1})
=\Delta(h^{-1}xh)=\Delta(x).
\]
Thus, we get
\[
\Rr^{h}\Delta(\cdot)(\Rr^{h})^{-1}=\Rr\Rr'\Delta(\cdot)(\Rr')^{-1}\Rr^{-1}=\Rr\Delta(\cdot)\Rr^{-1}=\Delta^{\mathrm{op}}(\cdot),
\]
hence \eqref{qtr1} is satisfied. 
Moreover, since $\Rr'$ is central  and $\Rr,\Rr'$ satisfy \eqref{qtr2} and\eqref{qtr3}, also $\Rr^{h}$ satisfies \eqref{qtr2} and \eqref{qtr3}.
\end{proof}

\begin{remark}
    Since $\Rr'\Delta(\cdot)(\Rr')^{-1}=\Delta(\cdot)$, we observe that $\Rr'$ is a quasitriangular structure for $H$ if and only if $H$ is cocommutative.
\end{remark}

We are now able to state the aforementioned classification result.

\begin{proposition}
\label{pro:Rot}
Let $H_{1}$ and $H_{2}$ be bialgebras. Then, any quasitriangular structure $\Ss$ on $H_{1}\ot H_{2}$ is cohomologous to a quasitriangular structure
\begin{equation}\label{eq:quasitriangtensprod}
\Rr_{\ot}:=(1_{H_{1}}\ot\Qq\ot1_{H_{2}})\big((\id _{H_{1}}\ot\tau\ot\id _{H_{2}})(\Rr _1\ot\Rr _2)\big),
\end{equation}
where $\Rr _1\in H_{1}\ot H_{1}$ and $\Rr _2\in H_{2}\ot H_{2}$ are quasitriangular structures of $H_{1}$ and $H_{2}$, respectively, and $\Qq\in H_{2}\ot H_{1}$ is a central weak $\Rr$-matrix of $(H_{2},H_{1})$. Explicitly, the triple $(\Rr_1,\Rr_2,\Qq)$ associated with $\Ss$ is defined by taking $(\Rr_1,\Rr_2,\Gg,\Hh):=\Phi(\Ss)$ and then $\Qq=\Hh(\Gg^\op)^{-1}$.
\end{proposition}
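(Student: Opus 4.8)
The plan is to regard the given quasitriangular structure $\Rr$ on $H_1\otimes H_2$ as a twist (\cref{exa:Rtwist}) and feed it into \cref{thm:twistot}. With $p_1=\mathrm{Id}\otimes\varepsilon$ and $p_2=\varepsilon\otimes\mathrm{Id}$, the components produced by that theorem are $\Rr_1=(p_1\otimes p_1)(\Rr)$, $\Rr_2=(p_2\otimes p_2)(\Rr)$, $\Gg=(p_1\otimes p_2)(\Rr)$ and $\Hh=(p_2\otimes p_1)(\Rr)$, and the theorem already delivers the cohomology statement: $\Rr$ is cohomologous to $\Rr_\otimes=(1_{H_1}\otimes\Qq\otimes 1_{H_2})(\mathrm{Id}\otimes\tau\otimes\mathrm{Id})(\Rr_1\otimes\Rr_2)$ with $\Qq=(\Gg^{\op}\Hh^{-1})^{-1}=\Hh(\Gg^{\op})^{-1}$. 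It thus remains to upgrade this twist-level output to the quasitriangular level, namely to show that (a) $\Rr_1,\Rr_2$ are quasitriangular structures, (b) $\Qq$ is a \emph{central} weak $\Rr$-matrix of $(H_2,H_1)$, and (c) $\Rr_\otimes$ is a quasitriangular structure on $H_1\otimes H_2$.

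For (a), I would use that $p_1$ and $p_2$ are surjective bialgebra maps: the hexagon equations \eqref{qtr2}--\eqref{qtr3} push forward along $p_i^{\otimes 3}$ (this is the content of \cref{exa:weakR} with $\alpha=\beta=p_i$), while quasi-cocommutativity \eqref{qtr1} pushes forward precisely because $p_i$ is surjective. Hence $(H_1,\Rr_1)$ and $(H_2,\Rr_2)$ are quasitriangular, and in particular $(H_i)_{\Rr_i}=H_i^{\cop}$ by \cref{exa:Rtwist}.

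The key new ingredient beyond the twist case --- and the step I expect to be the main obstacle --- is centrality, which is where quasi-cocommutativity (the feature distinguishing a genuine quasitriangular structure from a bare twist) is used. Recall from the proof of \cref{thm:twistot} that $(p_1\otimes p_2)\Delta_{H_1\otimes H_2}=\mathrm{Id}$ and $(p_2\otimes p_1)\Delta_{H_1\otimes H_2}=\tau_{H_1,H_2}$; flipping these also gives $(p_1\otimes p_2)\Delta^{\op}_{H_1\otimes H_2}=\mathrm{Id}$ and $(p_2\otimes p_1)\Delta^{\op}_{H_1\otimes H_2}=\tau_{H_1,H_2}$. Applying the algebra map $p_1\otimes p_2$ (resp. $p_2\otimes p_1$) to \eqref{qtr1}, written as $\Rr\Delta_{H_1\otimes H_2}(w)=\Delta^{\op}_{H_1\otimes H_2}(w)\Rr$, then yields $\Gg w=w\Gg$ for all $w\in H_1\otimes H_2$ (resp. $\Hh\,\tau_{H_1,H_2}(w)=\tau_{H_1,H_2}(w)\,\Hh$), so that $\Gg$ and $\Hh$ are central as $w$ ranges over all of $H_1\otimes H_2$. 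Since the flip is an algebra isomorphism, $\Gg^{\op}$ is central too, whence $\Qq=\Hh(\Gg^{\op})^{-1}$ is central in $H_2\otimes H_1$. Combining this with \cref{thm:twistot} (which asserts that $\Qq^{-1}=\Gg^{\op}\Hh^{-1}$ is a weak $\Rr$-matrix of $((H_2)_{\Rr_2},(H_1)_{\Rr_1})=(H_2^{\cop},H_1^{\cop})$) and the equivalence $(i)\Leftrightarrow(iii)$ of \cref{lem:weakop}, we conclude that $\Qq$ is a weak $\Rr$-matrix of $(H_2,H_1)$, hence a central one, which gives (b).

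Finally, for (c) I would write $\Rr_\otimes=\Tt\,\widetilde{\Rr}$, where $\widetilde{\Rr}:=(\mathrm{Id}\otimes\tau\otimes\mathrm{Id})(\Rr_1\otimes\Rr_2)$ is the standard quasitriangular structure on $H_1\otimes H_2$ attached to $(H_1,\Rr_1)$ and $(H_2,\Rr_2)$ (see the Introduction) and $\Tt:=1_{H_1}\otimes\Qq\otimes 1_{H_2}$. A direct computation from the weak hexagons \eqref{wtr2}--\eqref{wtr3} of $\Qq$, entirely parallel to the one in \cref{lem:Drinfeldtwist}, shows that $\Tt$ satisfies \eqref{qtr2}--\eqref{qtr3}, i.e. is a weak $\Rr$-matrix of $(H_1\otimes H_2,H_1\otimes H_2)$, and $\Tt$ is central by (b). Since $\Tt$ is central and both $\Tt$ and $\widetilde{\Rr}$ satisfy the hexagon equations, their product $\Rr_\otimes$ does as well; and since $\Tt$ is central while $\widetilde{\Rr}$ is quasi-cocommutative, $\Rr_\otimes$ is quasi-cocommutative too. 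Therefore $\Rr_\otimes$ is a quasitriangular structure --- this is exactly the mechanism of the lemma preceding this proposition --- and, together with the cohomology relation supplied by \cref{thm:twistot}, this completes the argument.
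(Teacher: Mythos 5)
Your proposal is correct and follows essentially the same route as the paper: feed the quasitriangular structure into \cref{thm:twistot} as a twist, upgrade $\Rr_1,\Rr_2$ to quasitriangular structures via the surjective bialgebra projections, extract centrality of $\Gg$ and $\Hh$ (hence of $\Qq$) from quasi-cocommutativity, pass from $(H_2^{\cop},H_1^{\cop})$ to $(H_2,H_1)$ via \cref{lem:weakop}, and verify directly that $\Rr_\ot$ is quasitriangular. The only cosmetic difference is that you organize the final verification through the factorization $\Rr_\ot=\Tt\,\widetilde{\Rr}$ with $\Tt=1\ot\Qq\ot1$ central and satisfying the hexagons, whereas the paper computes on $\Rr_\ot$ as a whole (and additionally routes $\Ss^{\Gg}$ through the preceding lemma); the content is the same.
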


\begin{proof}
    Let $\Ss$ be a quasitriangular structure on $H_{1}\ot H_{2}$, hence a twist on $H_{1}\ot H_{2}$. By definition, the datum $(\Rr_1,\Rr_2,\Gg,\Hh):=\Phi(\Ss)$ is given by setting 
\begin{gather*}
    \Rr _{1}:=\left( \id \otimes \varepsilon \otimes \id %
\otimes \varepsilon \right) \left( \Ss\right)\in H_{1}\ot H_{1}, \qquad \Rr _{2}:=\left( \varepsilon \otimes \id \otimes
\varepsilon \otimes \id \right) \left( \Ss\right)\in H_{2}\ot H_{2} 
\\
\Gg :=\left( \id \otimes \varepsilon \otimes \varepsilon
\otimes \id \right) \left( \Ss\right)\in H_{1}\ot H_{2}, \qquad 
\Hh :=\left( \varepsilon \otimes \id \otimes \id %
\otimes \varepsilon \right) \left( \Ss\right)\in H_{2}\ot H_{1}. 
\end{gather*}
Then, by Theorem \ref{thm:twistot}, we know that $\Ss$ is cohomologous to 
\[
\Ss^\Gg=(1_{H_{1}}\ot\Rr^{-1}\ot1_{H_{2}})\big((\id _{H_{1}}\ot\tau\ot\id _{H_{2}})(\Rr _1\ot\Rr _2)\big),
\]
where $\Rr _{1}$ and $\Rr _{2}$ are twists on $H_{1}$ and $H_{2}$, respectively. Moreover, $\Rr=\Gg^{\mathrm{op}}\Hh^{-1}\in H_{2}\ot H_{1}$ is a weak $\Rr$-matrix of $((H_{2})_{\Rr_{2}},(H_{1})_{\Rr_{1}})$.
Since $\id \ot\varepsilon:H_{1}\ot H_{2}\to H_{1}$ and $\varepsilon\ot\id :H_{1}\ot H_{2}\to H_{2}$ are bialgebra maps, using Example \ref{exa:weakR} we obtain that $\Rr _1$ and $\Rr _2$ are weak $\Rr$-matrices of $(H_{1},H_{1})$ and $(H_{2},H_{2})$, respectively. Moreover, $\Ss$ satisfies \eqref{qtr1}, i.e.\ for all $x\in H_{1}$ and $a\in H_{2}$ we have
\begin{equation}\label{eqq}
    (x_{2}\ot a_{2}\ot x_{1}\ot a_{1})\Ss=\Ss(x_{1}\ot a_{1}\ot x_{2}\ot a_{2}).
\end{equation}
In particular, we have $(x_{2}\ot1_{H_{2}}\ot x_{1}\ot1_{H_{2}})\Ss=\Ss(x_{1}\ot1_{H_{2}}\ot x_{2}\ot1_{H_{2}})$ for all $x\in H_{1}$ and then, applying $\id \ot\varepsilon\ot\id \ot\varepsilon$, we get $\Delta^{\mathrm{op}}(x)\Rr_{1}=\Rr_{1}\Delta(x)$ for all $x\in H_{1}$. Thus, $\Rr_{1}$ is a quasitriangular structure on $H_{1}$. Similarly, we get that $\Rr_{2}$ is a quasitriangular structure on $H_{2}$. Therefore, $(H_{1})_{\Rr_{1}}=H_{1}^{\mathrm{cop}}$ and $(H_{2})_{\Rr_{2}}=H_{2}^{\mathrm{cop}}$, see Example \ref{exa:Rtwist}. Moreover, applying $\id \ot\varepsilon\ot\varepsilon\ot\id $ to \eqref{eqq}, we get $(x\ot a)\Gg=\Gg(x\ot a)$ for all $x\in H_{1}$ and $a\in H_{2}$, hence $\Gg$ is central while, applying $\varepsilon\ot\id \ot\id \ot\varepsilon$ to \eqref{eqq}, we get $(a\ot x)\Hh=\Hh(a\ot x)$ for all $x\in H_{1}$ and $a\in H_{2}$, hence also $\Hh$ is central. Thus, $\Rr$ is a central weak $\Rr$-matrix of $(H_{2}^{\mathrm{cop}},H_{1}^{\mathrm{cop}})$ that is equivalent, by Lemma \ref{lem:weakop}, to $\Qq\coloneq \Rr^{-1}\in H_{2}\ot H_{1}$ being a central weak $\Rr$-matrix of $(H_{2},H_{1})$.

We also recall that $\Ss$ satisfies \eqref{qtr2} and \eqref{qtr3}, i.e. 
\[
    (\id _{H_{1}\ot H_{2}}\ot\Delta_{H_{1}\ot H_{2}})(\Ss)=\Ss_{13}\Ss_{12},\qquad (\Delta_{H_{1}\ot H_{2}}\ot\id _{H_{1}\ot H_{2}})(\Ss)=\Ss_{13}\Ss_{23}.
\]
By applying $\id \ot\varepsilon\ot\varepsilon\ot\id \ot\varepsilon\ot\id $ to the first equality we get $(\id \ot\Delta)(\Gg)=\Gg_{13}\Gg_{12}$, while applying $\id \ot\varepsilon\ot\id \ot\varepsilon\ot\varepsilon\ot\id $ to the second we obtain $(\Delta\ot\id )(\Gg)=\Gg_{13}\Gg_{23}$. Therefore, $\Gg$ is a central weak $\Rr$-matrix of $(H_{1},H_{2})$. 
Moreover, we have
\begin{align*}
\Delta_{H_{1}\ot H_{2}}(\Gg)&=(\id \ot\tau\ot\id )(\Delta_{H_{1}}\ot\Delta_{H_{2}})(\Gg)\\
&\overset{\mathclap{\eqref{eq:DDR}}}=(\id \ot\tau\ot\id )(\Gg^{i}\Gg%
^{p}\otimes \Gg^{j}\Gg^{s}\otimes \Gg_{p}\Gg%
_{s}\otimes \Gg_{i}\Gg_{j})\\
&=\Gg^{i}\Gg%
^{p}\otimes  \Gg_{p}\Gg%
_{s}\otimes \Gg^{j}\Gg^{s}\otimes\Gg_{i}\Gg_{j}\\
&=(\Gg^{i}\ot\Gg_{s}\ot\Gg^{s}\ot\Gg_{i})(\Gg\ot\Gg).
\end{align*}
Set $\Ss'\coloneqq (\Gg\ot\Gg)
\Delta_{H_{1}\ot H_{2}}(\Gg^{-1})$. Therefore, $(\Ss')^{-1}= 
\Delta_{H_{1}\ot H_{2}}(\Gg)(\Gg^{-1}\ot\Gg^{-1})=\Gg^{i}\ot\Gg_{s}\ot\Gg^{s}\ot\Gg_{i}$.
Thus, $(\Ss')^{-1}$ is central invertible. Moreover, $(\Ss')^{-1}$ also satisfies \eqref{qtr2} and \eqref{qtr3} since $\Gg$ is a central weak $\Rr$-matrix. Hence $(\Ss')^{-1}$ is a central weak $\Rr$-matrix of $(H_{1}\ot H_{2},H_{1}\ot H_{2})$ and so, by Lemma \ref{lem:weakop}, also $\Ss'$ is a central weak $\Rr$-matrix of $(H_{1}\ot H_{2},H_{1}\ot H_{2})$. Therefore, by the previous lemma we know that $\Ss^{\Gg}$ is a quasitriangular structure on $H_{1}\ot H_{2}$.

In order to conclude, we show that, given a triple $(\Rr_1,\Rr_2,\Qq)$ where $\Rr_{1}\in H_{1}\ot H_{1}$ and $\Rr_{2}\in H_{2}\ot H_{2}$ are quasitriangular structures of $H_{1}$ and $H_{2}$, respectively, and $\Qq\in H_{2}\ot H_{1}$ is a central weak $\Rr$-matrix of $(H_{2},H_{1})$, then $\Rr_{\ot}$ defined as in \eqref{eq:quasitriangtensprod} is a quasitriangular structure on $H_{1}\ot H_{2}$. Clearly, $\Rr_{\ot}$ is invertible. We compute
\begin{align*}
    \Rr_{\ot}\Delta_{H_{1}\ot H_{2}}(x\ot a)&\overset{\mathclap{\eqref{eq:quasitriangtensprod}}}=(1\ot \Qq \ot1)(\Rr_{1}^ix_{1}\ot\Rr_2^ja_{1}\ot\Rr_{1_{i}}x_{2}\ot\Rr_{2_{j}}a_{2})\\&\overset{\mathclap{\eqref{qtr1}}}{=}(1\ot \Qq \ot1)(x_{2}\Rr_1^i\ot a_{2}\Rr_2^j\ot x_{1}\Rr_{1_{i}}\ot a_{1}\Rr_{2_{j}})\\&=(x_{2}\ot a_{2}\ot x_{1}\ot a_{1})(1\ot \Qq \ot1)(\Rr_1^i\ot \Rr_2^j\ot\Rr_{1_{i}}\ot\Rr_{2_{j}})\\&\overset{\mathclap{\eqref{eq:quasitriangtensprod}}}=\Delta^{\mathrm{op}}_{H_{1}\ot H_{2}}(x\ot a)\Rr_{\ot}
\end{align*}
and also
\begin{align*}
    (\id _{H_1\ot H_2}&\ot\Delta_{H_{1}\ot H_{2}})(\Rr_{\ot})\\
    &\overset{\mathclap{\eqref{eq:quasitriangtensprod}}}=(1\ot \Qq^i \ot( \Qq_i )_1\ot1\ot( \Qq_i )_2\ot1)(\Rr_1^i\ot\Rr_2^j\ot(\Rr_{1_i})_{1}\ot(\Rr_{2_{j}})_{1}\ot(\Rr_{1_{i}})_2\ot(\Rr_{2_{j}})_{2})\\&\overset{\mathclap{\eqref{qtr2},\eqref{wtr2}}}{=}\hspace{0.3cm}(1\ot \Qq^i\Qq^s \ot \Qq_s \ot1\ot \Qq_i \ot1)(\Rr_1^i\Rr_1^p\ot\Rr_2^j\Rr_2^s\ot\Rr_{1_{p}}\ot\Rr_{2_{s}}\ot\Rr_{1_{i}}\ot\Rr_{2_{j}})\\&=(1\ot \Qq^i \ot1\ot1\ot \Qq_i \ot1)(\Rr_1^i\ot\Rr_2^j\ot1\ot1\ot\Rr_{1_{i}}\ot\Rr_{2_{j}})\\&\hspace{.5cm} (1\ot \Qq^s \ot \Qq_s\ot1\ot1\ot1)(\Rr_1^p\ot\Rr_2^s\ot\Rr_{1_{p}}\ot\Rr_{2_{s}}\ot1\ot1)\\
    &\overset{\mathclap{\eqref{eq:quasitriangtensprod}}}=(\Rr_{\ot})_{13}(\Rr_{\ot})_{12},
\end{align*}
using that $\Qq$ is central. Similarly, we have $(\Delta_{H_{1}\ot H_{2}}\ot\id _{H_{1}\ot H_{2}})(\Rr_{\ot})=(\Rr_{\ot})_{13}(\Rr_{\ot})_{23}$ and then $\Rr_{\ot}$ is a quasitriangular structure on $H_{1}\ot H_{2}$. 
\end{proof}

\begin{remark}
    1) A direct computation, using the fact that the evaluation of the counit on any entry of $\Rr_1,\Rr_2$ and $\Qq$ gives $1$, shows that $\Phi(\Rr_\otimes)=(\Rr_1,\Rr_2,1_{H_1}\otimes 1_{H_2},\Qq)$, so the triple associated with $\Rr_\otimes$ is again $(\Rr_1,\Rr_2,\Qq)$.
    
    2) The assignment $(\Rr_1,\Rr_2,\Qq)\mapsto [\Rr_\otimes]$  defines a surjection from the set of triples $(\Rr_1, \Rr_2, \Qq)$, where $\Rr_{1}$ and $\Rr_{2}$ are $\Rr$-matrices of $H_{1}$ and $H_{2}$, respectively, and $\Qq$ is a central weak $\Rr$-matrix of $(H_{2},H_{1})$, onto the set of cohomology classes of $\Rr$-matrices of $H_1 \otimes H_2$. However, this map seems generally not injective. Otherwise it would be bijective and so its inverse would be well-defined, but this seems not the case.
    Indeed, given $h\in (H_1\otimes H_2)^\times$, set $h^1:=(\id \otimes \varepsilon) (h)$ and $h^2:=(\varepsilon \otimes
\id )(h)$. Then $[\Rr_\otimes^h]=[\Rr_\otimes]$ and a direct computation shows that 
\[\Phi(\Rr_\otimes^h)=(\Rr_1^{h^1},\Rr_2^{h^2},(h^1\otimes h^2)h^{-1},(h^2\otimes h^1)\Qq(h^\op)^{-1}).\] 
Thus, by using that $\Qq$ is central, we get that the triple associated with  $\Rr_\otimes^h$ is $(\Rr_1^{h^1},\Rr_2^{h^2},\Qq)$. We see no reason for it to coincide with $(\Rr_1, \Rr_2, \Qq)$.
    \begin{invisible}
        Volendo essere più rigorosi, notiamo che $\Rr_\otimes^g$ è cohomologo al $\Rr_\otimes'$ definito tramite la nuova tripla. Quindi $[\Rr_\otimes']=[\Rr_\otimes]$ ma l'inversa di cui sopra ha immagini diverse su queste due classi.
        \end{invisible}
\end{remark}

\begin{remark}
    Given bialgebras $H$ and $A$, a classification result for the quasitriangular structures on $H\ot A$ is given in \cite[Theorem 2.9]{Chen-quasi} (consider $R=1\ot 1$ so that $H\bowtie^{R}A=H\otimes A$ in loc.\ cit.). These have the form
\[
\Ff\coloneqq \big((\id\otimes \tau\otimes\id)(\Ss \otimes \Tt)\big)(\Uu^i\otimes \Vv^\op\otimes \Uu_i)=(\Uu^i\otimes \Vv^\op\otimes \Uu_i)\big((\id\otimes \tau\otimes\id)(\Ss \otimes \Tt)\big)
\]
where $\Ss \in H\ot H$ and $\Tt\in A\ot A$ are quasitriangular structures of $H$ and $A$, respectively, while $\Uu,\Vv\in H\ot A$ are central weak $\Rr$-matrices of $(H,A)$. Using the fact that the evaluation of the counit on any entry of $\Ss,\Tt,\Uu$ and $\Vv$ gives $1$, a direct computation shows that $\Phi(\Ff)=(\Ss,\Tt,\Uu,\Vv^\op)$ so that the triple associated with $\Ff$ is $(\Ss,\Tt,(\Vv\Uu^{-1})^\op)$. In view of Proposition \ref{pro:Rot}, 
$\Ff$ is cohomologous to 
\[(1\ot(\Vv\Uu^{-1})^\op\ot 1)\big((\id \ot\tau\ot\id )(\Ss \ot\Tt)\big).\] 
Note that, by Lemma \ref{lem:weakop}, since $\Uu$ and $\Vv$ are central, we coherently get that $(\Vv\Uu^{-1})^\op$ is a central weak $\Rr$-matrix of $(A,H)$.
\end{remark}


\section{The relevant \texorpdfstring{$2$}{2}-categories}\label{sec:2-categories}

The binary product of two objects $H_{1}$ and $H_{2}$ in the category $\Bialgcc$ of cocommutative bialgebras is given by the tensor product bialgebra $H_{1}\ot H_{2}$, where the projections are $p_{1}\coloneqq \id \ot\varepsilon_{H_{2}}:H_1\otimes H_2\to H_1$ and $p_{2}\coloneqq \varepsilon_{H_{1}}\ot\id :H_1\otimes H_2\to H_2$. Moreover, the base field $\Bbbk$ is a terminal object in $\Bialgcc$, hence $\Bialgcc$ is a cartesian monoidal category (see e.g.\ \cite[Remark 3.4]{Pfeiffer}). 
Given two morphisms $f_{1}:H\to H_{1}$ and $f_{2}:H\to H_{2}$ in $\Bialgcc$, the diagonal morphism $\langle f_{1},f_{2}\rangle:H\to H_{1}\ot H_{2}$ is given by $f:=(f_{1}\ot f_{2})\Delta_{H}$.
\begin{equation*}
\xymatrix{&H\ar[ld]_{f_1}\ar[rd]^{f_2}\ar@{.>}[d]|{\langle f_{1},f_{2}\rangle}\\ H_1&
H_1\otimes H_2\ar[l]_{p_1}\ar[r]^{p_2}& H_2}
\end{equation*}
We recall that, if $H$ is not cocommutative (and so $\Delta_{H}$ is not a coalgebra map), $f$ is not a coalgebra map, in general (take e.g.\ $f_1=f_2=\id$ so that $f=\Delta_H$). 

\begin{remark}
    We recall that products in $\Bialgcc$ are computed as in $\mathsf{Coalg_{cc}}$, see \cite[Remark 3.4]{Pfeiffer}, and the latter category is cartesian monoidal, as it is the category $\mathsf{Comon_{cc}}(\Mm)$ of cocommutative comonoids in any symmetric monoidal category $\Mm$, see \cite[Corollary 2.24]{KM}.
\end{remark}

Our goal is to extend the above binary product construction to the context of quasitriangular bialgebras.
If $(H,\Rr)$ is such a bialgebra, the morphism $f:H\to (H_1\ot H_2)_\Ff$ is a bialgebra map where $\Ff=1\ot (f_2\ot f_1)(\Rr^{-1})\ot 1.$ This result has been proven e.g.\ in \cite[Lemma 4.2]{Schneider} in presence of an antipode $S$ (where one has $\Rr^{-1}=(S\ot\id)(\Rr)$), but the result remains valid for $H$ bialgebra. We are so led to include twists in our treatment. Moreover, given quasitriangular bialgebras $(H_{1},\Rr_{1})$ and $(H_{2},\Rr_{2})$, the tensor product bialgebra $H_{1}\ot H_{2}$ is quasitriangular with $\widetilde{\Rr}:=(\id \ot\tau\ot\id )(\Rr_{1}\ot\Rr_{2})$, see e.g.\ \cite[Theorem 2.2]{Chen-quasi}. If $(H_{1},\Rr_{1})$ and $(H_{2},\Rr_{2})$ are further triangular, then also $\left( H_{1}\otimes H_{2},\widetilde{\Rr}\right)$ is triangular. In the following, we will adopt the following notation
\begin{equation}
\label{def:tensorproductquasi}
\left( H_{1},\Rr _{1}\right) \otimes \left( H_{2},\Rr %
_{2}\right) :=\left( H_{1}\otimes H_{2}, \widetilde{\Rr}\right).
\end{equation}

\begin{invisible}
We observe that the categories $\mathsf{QTrBialg}$ and $\TrBialg$ are monoidal in the following way. Given quasitriangular bialgebras $\left( H_{1},\Rr _{1}\right) $ and $\left( H_{2},\Rr %
_{2}\right) $, it is known that the tensor product bialgebra $H_{1}\ot H_{2}$ is quasitriangular with $(\id \ot\tau\ot\id )(\Rr_{1}\ot\Rr_{2})$, see e.g.\ \cite[Theorem 2.2]{Chen-quasi}. In case $(H_{1},\Rr_{1})$ and $(H_{2},\Rr_{2})$ are triangular, also $\left( H_{1}\otimes H_{2},\left( \id \otimes \tau
\otimes \id \right) \left( \Rr _{1}\otimes \Rr %
_{2}\right) \right)$ is triangular.
Indeed
\begin{eqnarray*}
\left[ \left( \id _{H_{1}}\otimes \tau \otimes \id %
_{H_{2}}\right) \left( \Rr _{1}\otimes \Rr _{2}\right) \right]
^{-1} &=&\left( \id _{H_{1}}\otimes \tau \otimes \id %
_{H_{2}}\right) \left( \Rr _{1}^{-1}\otimes \Rr %
_{2}^{-1}\right) \\
&=&\left( \id _{H_{1}}\otimes \tau \otimes \id _{H_{2}}\right)
\left( \tau \otimes \tau \right) \left( \Rr _{1}\otimes \Rr %
_{2}\right) \\
&=&\tau _{H_{1}\otimes H_{2},H_{1}\otimes H_{2}}\left( \id %
_{H_{1}}\otimes \tau \otimes \id _{H_{2}}\right) \left( \Rr %
_{1}\otimes \Rr _{2}\right) \\
&=&\left[\left( \id %
_{H_{1}}\otimes \tau \otimes \id _{H_{2}}\right) \left( \Rr %
_{1}\otimes \Rr _{2}\right)\right]^{\mathrm{op}}
\end{eqnarray*}
Thus, one can define the tensor product of $(H_{1},\Rr_{1})$ and $(H_{2},\Rr_{2})$ as
\begin{equation}
\left( H_{1},\Rr _{1}\right) \otimes \left( H_{2},\Rr %
_{2}\right) :=\left( H_{1}\otimes H_{2}, \widetilde{\Rr}\right)\quad \text{where}\quad \widetilde{\Rr}\coloneqq \left( \id \otimes \tau
\otimes \id \right) \left( \Rr _{1}\otimes \Rr %
_{2}\right).
\end{equation}%
Observe also that the base field $\Bbbk$ is (quasi)triangular with unique (quasi)triangular structure $\Rr=1\ot1$. The categories $(\mathsf{QTrBialg},\otimes,\Bbbk)$ and $(\TrBialg,\otimes,\Bbbk)$ are monoidal, defining the associativity and unit constraints as in $\mathsf{Bialg}$. Moreover, the projections $p_1$ and $p_2$ are clearly morphisms in $\mathsf{QTrBialg}$. However, $((H_{1},\Rr_{1})\ot(H_{2},\Rr_{2}),p_{1},p_{2})$ does not coincide with the binary product of $(H_{1},\Rr_{1})$ and $(H_{2},\Rr_{2})$ in $\mathsf{QTrBialg}$ (or $\mathsf{TrBialg}$) by a mere extension of the cocommutative case argument. In fact, consider another (quasi)triangular bialgebra $\left( H,\Ss \right) $ and two morphisms  of (quasi)triangular bialgebras $f_{1}:\left( H,\Ss \right) \rightarrow \left( H_{1},\Rr %
_{1}\right) $ and $f_{2}:\left( H,\Ss \right) \rightarrow \left(
H_{2},\Rr _2\right) $.
Mimicking the cocommutative case, define
$f=\left( f_{1}\otimes f_{2}\right) \Delta _{H}:H\to H_1\otimes H_2$ and let us see if it defines a morphism of triangular bialgebras
\begin{equation*}
f:\left( H,\Ss \right) \rightarrow \left( H_{1}\otimes H_{2},\left(
\id \otimes \tau \otimes \id \right) \left( \Rr %
_{1}\otimes \Rr _{2}\right) \right).
\end{equation*}%
Clearly, $f$ is
a counitary algebra map as it is composition of such maps. Moreover, we have%
\begin{eqnarray*}
\Delta _{H_{1}\otimes H_{2}}f\left( x\right)  &=&\Delta _{H_{1}\otimes
H_{2}}\left( f_{1}\left( x_{1}\right) \otimes f_{2}\left( x_{2}\right)
\right)  \\
&=&f_{1}\left( x_{1}\right) _{1}\otimes f_{2}\left( x_{2}\right) _{1}\otimes
f_{1}\left( x_{1}\right) _{2}\otimes f_{2}\left( x_{2}\right) _{2} \\
&=&f_{1}\left( x_{11}\right) \otimes f_{2}\left( x_{21}\right) \otimes
f_{1}\left( x_{12}\right) \otimes f_{2}\left( x_{22}\right)  \\
&=&f_{1}\left( x_{1}\right) \otimes f_{2}\left( x_{3}\right) \otimes
f_{1}\left( x_{2}\right) \otimes f_{2}\left( x_{4}\right)  \\
&\overset{\eqref{qtr1}}=&f_{1}\left( x_{1}\right) \otimes f_{2}(\Ss ^{i}x_{2}\overline{%
\Ss }^{j})\otimes f_{1}(\Ss _{i}x_{3}\overline{\Ss }%
_{j})\otimes f_{2}\left( x_{4}\right)  \\
&=&f_{1}\left( x_{1}\right) \otimes f_{2}\left( \Ss ^{i}\right)
f_{2}\left( x_{2}\right) f_{2}\left( \overline{\Ss }^{j}\right)
\otimes f_{1}\left( \Ss _{i}\right) f_{1}\left( x_{3}\right)
f_{1}\left( \overline{\Ss }_{j}\right) \otimes f_{2}\left(
x_{4}\right)  \\
&=&\left( 1\otimes f_{2}\left( \Ss ^{i}\right) \otimes f_{1}\left(
\Ss _{i}\right) \otimes 1\right) \left( f_{1}\left( x_{1}\right)
\otimes f_{2}\left( x_{2}\right) \otimes f_{1}\left( x_{3}\right) \otimes
f_{2}\left( x_{4}\right) \right) \left( 1\otimes f_{2}\left( \overline{%
\Ss }^{j}\right) \otimes f_{1}\left( \overline{\Ss }%
_{j}\right) \otimes 1\right)  \\
&=&\left( 1\otimes f_{2}\left( \Ss ^{i}\right) \otimes f_{1}\left(
\Ss _{i}\right) \otimes 1\right) \left( f\left( x_{1}\right) \otimes
f\left( x_{2}\right) \right) \left( 1\otimes f_{2}\left( \overline{\Ss}^{j}\right) \otimes f_{1}\left( \overline{\Ss }_{j}\right) \otimes
1\right)  \\
&=&\left( 1\otimes \left( f_{2}\otimes f_{1}\right) \left( \Ss %
\right) \otimes 1\right) \left( f\otimes f\right) \Delta _{H}\left( x\right)
\left( 1\otimes \left( f_{2}\otimes f_{1}\right) \left( \Ss %
^{-1}\right) \otimes 1\right), 
\end{eqnarray*}%
hence $f$ is not a morphism of coalgebras in general.

If we set $\Ff:=1\otimes \left( f_{2}\otimes f_{1}\right) \left( \Ss %
^{-1}\right) \otimes 1$, the previous equality rewrites as 
\[\Ff \Delta _{H_1\otimes H_2 }f\left( x\right) \Ff ^{-1}=\left(
f\otimes f\right) \Delta_H \left( x\right). \]
\end{invisible}

In order to extend the binary product construction, we consider the $2$-category of bialgebras and twisted morphisms of bialgebras introduced in \cite{Davydov}.

We refer to \cite[\S  7.1]{BorI94} or \cite[\S 2.3]{Johnson-Yau-book} for basic facts about (strict) $2$-categories. We just mention that a (locally small) $2$-category is precisely a category enriched over $\Cat$, see \cite[Proposition 2.3.9]{Johnson-Yau-book}. In particular, a $2$-category $\Cc$ is a category where, for all objects $X$ and $Y$, the hom-set $\Cc(X,Y)$ is a category itself, whose objects are called $1$-cells while the morphisms are called $2$-cells.

\subsection{The \texorpdfstring{$2$}{2}-category \texorpdfstring{$\TwBialg$}{TwBialg} of bialgebras and twisted morphisms}
Following \cite[pages 2 and 16]{Davydov}, we consider the category $\TwBialg$  whose objects are bialgebras and whose morphisms from a bialgebra $H$ to a bialgebra $H'$ are pairs $\left(f,\Ff \right)$ consisting of a twist $\Ff \in H^{\prime }\otimes H^{\prime }$ and a morphism of bialgebras $f:H\rightarrow H_{\Ff }^{\prime
}$. They are denoted by $\left(f,\Ff \right):H\to H'$ and called \emph{twisted morphisms of bialgebras}. 

In this category, the composition of twisted morphisms of  bialgebras $\left( f,\Ff \right) :H \rightarrow
H'$ and $\left( f^{\prime },%
\Ff ^{\prime }\right) :H' \rightarrow H'' $ is defined as
\begin{equation}
\label{eq:compo}
\left( f^{\prime },\Ff ^{\prime }\right) \circ \left( f,\Ff %
\right) :=\left( f^{\prime }f,\left( f^{\prime }\otimes f^{\prime }\right)
\left( \Ff \right) \Ff ^{\prime }\right) .
\end{equation}
The identity is $(\id,1\otimes 1):H\to H$. 

\begin{remark}
\label{rmk:PanseraLomp}
In \cite{Pansera} it is shown that, given a twisted morphism of bialgebras $(f,\Ff):H\to H$, then the bialgebra structure of $H$ can be extended to the skew polynomial ring $H[x;f]$, such that $\Delta(x)=\Ff(x\ot x)$ and $\varepsilon(x)=1$. We point out that in \cite{Pansera} the notion of twist (and, accordingly, that of twisted morphism of bialgebras) is dual with respect to Definition \ref{def:twist}. The result achieved in \cite{Pansera} has been extended in \cite{Lomp} to $H=B^{\ot m}$, for an arbitrary bialgebra $B$ and a central twist $\Ff$ on $B$ that satisfies the condition that $\Ff^{i}\ot1_{B}\ot1_{B}\ot\Ff_{i}$ is a twist on $B\ot B$. There, the skew polynomial ring is replaced by a skew monoid algebra $H\#M$ over a suitable free monoid $M$.
\end{remark}

Following \cite[page 5]{Davydov}, we also recall that $\TwBialg$ is in fact a (locally small) 2-category. 

In order to describe it explicitly, we need the following notion.

\begin{definition}[{cf.\ \cite[page 3]{Davydov}}]
Given two morphisms $(f',\Ff'),(f,\Ff):H\to H'$ in $\TwBialg$, a \emph{gauge transformation} $a:(f,\Ff)\Rightarrow (f',\Ff')$
consists of an element $a\in H'$ such that $\varepsilon'(a)=1$ and 
\begin{align}
  (a\ot a)\Ff&=\Ff'\Delta'(a); \label{def:gauge1}\\
  af(x)&=f'(x)a.
  \label{def:gauge2}
\end{align}
\end{definition}

Then, $\TwBialg$ is, in fact, a $2$-category  where: 
\begin{itemize}
    \item[i)] the $0$-cells are bialgebras $H$,
    \item[ii)] the $1$-cells are twisted morphisms of bialgebras $(f,\Ff):H\to H'$,
    \item[iii)] the $2$-cells are gauge transformations $a:(f,\Ff)\Rightarrow (f',\Ff')$. 
\end{itemize}
The vertical composition $\vc$ and the horizontal composition $\hc$ are given, respectively, by 
 \[\xymatrix@C=2cm{
H\ruppertwocell^{(f,\Ff)}{b}
\rlowertwocell_{(f'',\Ff'')}{a}
\ar[r]|(.3){(f',\Ff')} & H'}
\; =\;
\xymatrix@R=3cm@C=1cm{H\rtwocell^{(f,\Ff)}_{(f'',\Ff'')}{ab}&H'},
\qquad
\xymatrix{H\rtwocell^{(f,\Ff)}_{(f',\Ff')}{b} & H' \rtwocell^{(g,\Gg)}_{(g',\Gg')}{a}& H''}
\;= \; \xymatrix@C=1.3cm{H \rrtwocell^{(gf,(g\otimes g)(\Ff)\Gg)}_{(g'f',(g'\otimes g')(\Ff')\Gg')}{\mathrlap{ag(b)}}&  & H''}\]
i.e.\ $a\vc b=ab$ and $a\hc b=ag(b).$

\begin{remark}
The horizontal composition is not written explicitly in \cite{Davydov} but it can be deduced from \cite[page 4]{Davydov}. Indeed, by interchange law we have
\[a\hc b
=(a\vc 1)\hc (1\vc b)
=(a\hc 1)\vc (1\hc b)
=a\vc g(b)=ag(b).\]
\[\xymatrix{H\ar@/^{2pc}/[rr]|{(f,\Ff)}="a"
\ar[rr]|{(f',\Ff')}="b"
\ar@/^{-2pc}/[rr]|{(f',\Ff')}="c"
&&H'
\ar@{=>}"a";"b"^{b}
\ar@{=>}"b";"c"^{1}
\ar@/^{2pc}/[rr]|{(g,\Gg)}="a'"
\ar[rr]|{(g,\Gg)}="b'"
\ar@/^{-2pc}/[rr]|{(g',\Gg')}="c'"
\ar@{=>}"a'";"b'"^{1}
\ar@{=>}"b'";"c'"^{a}
&&H''}\]
The horizontal composition is well defined as 
\[
ag(b)gf(x)=ag(bf(x))=ag(f'(x)b)=ag(f'(x))g(b)=g'f'(x)ag(b)
\]
and 
\[
\begin{split}
(ag(b)\otimes ag(b))(g\otimes g)(\Ff)\Gg
&=(a\otimes a)\big((g\otimes g)((b\otimes b)\Ff)\big)\Gg
=(a\otimes a)\big((g\otimes g)(\Ff'\Delta'(b))\big)\Gg
\\&=(a\otimes a)(g\otimes g)(\Ff')(g\otimes g)\Delta'(b)\Gg
=(a\otimes a)(g\otimes g)(\Ff')\Gg\Delta'' g(b)\\&=(g'\ot g')(\Ff')(a\ot a)\Gg\Delta''g(b)=(g'\ot g')(\Ff')\Gg'\Delta''(a)\Delta''g(b)\\&=(g'\otimes g')(\Ff')\Gg'\Delta''(ag(b)).
\end{split}
\]
\end{remark}

\begin{remark}
\label{rmk:hata}
If $a\in H^{\times}$, from \eqref{def:gauge1}, we get that $\Ff^a =\Ff'$, where we use \eqref{def:F^h}, i.e.\ $\Ff'$ and $\Ff$ are cohomologous. Moreover, from \eqref{def:gauge2}, we have $\hat{a}\circ  f=f'$, where $\hat{a}
:=a\left( -\right)  a
^{-1}:H' _{\Ff}\rightarrow H' _{\Ff'}$.
\[\xymatrix@R=12pt{&H\ar[dl]_{f}\ar[dr]^{f'}\\H'_{\Ff}\ar[rr]^{\hat{a}}&&H'_{\Ff'} }\]

In \eqref{def:gauge1}, the roles of $\Ff$ and $\Ff'$ are  exchanged with respect to \cite{Davydov}.  This change is due to the fact that
\[
\begin{split}
\Delta'_{\Ff'}\hat{a}(x)&=\Ff'\Delta'(axa^{-1})(\Ff')^{-1}
=\Ff'\Delta'(a)\Delta'(x)\Delta'(a^{-1})(\Ff')^{-1}
=(a\otimes a) \Ff\Delta'(x)\Ff^{-1}(a^{-1}\otimes a^{-1})
\\&=(\hat{a}\otimes \hat{a})\Delta'_{\Ff}(x),
\end{split}
\]
so that $\hat{a}
:H' _{\Ff}\rightarrow H' _{\Ff'}$ is indeed a coalgebra map with domain and codomain in the given order which makes sense to the composition $\hat{a}\circ  f=f'$. Note that our \eqref{def:gauge1} agrees with \cite[\S1]{Davydov-TwDer}, where the particular gauge transformations considered have $a$ invertible.
\end{remark}

\begin{remark}
\label{rmk:gaugeps}
In \cite{Davydov}, the condition $\varepsilon'(a)=1$ is not part of the definition of gauge transformation but it is incorrectly observed that it follows from \eqref{def:gauge1} together with normalisation conditions for twists. What is true is that $\varepsilon'(a)=1$ if, and only if, $a\neq 0$. Of course, this holds when $a$ is invertible as in \cite{Davydov-TwDer}.
\end{remark}

\subsection{The \texorpdfstring{$2$}{2}-category \texorpdfstring{$\TwTrBialg$}{TwTrBialg} of triangular bialgebras and twisted morphisms}
We now want to define another $2$-category, whose 0-cells are triangular bialgebras, endowed with a $2$-functor (see \cite[\S 4.1]{Johnson-Yau-book}) into $\TwBialg$. From now on, the quasitriangular structures considered will be triangular.

\begin{definition}
Let $(H,\Rr)$ and $(H',\Rr')$ be triangular bialgebras. We say that $\left(f,\Ff \right):(H,\Rr)\to (H',\Rr')$ is a \emph{twisted morphism of triangular bialgebras} if $f:(H,\Rr)\to(H'_{\Ff},\Rr'_{\Ff})$ is a morphism in the category $\TrBialg$ of triangular bialgebras.
\end{definition}

\begin{remark}
Recall that $H_{\Ff }^{\prime }=\left( H^{\prime
},m^{\prime },u^{\prime },\Delta _{\Ff }^{\prime },\varepsilon
^{\prime }\right) $, where $\left( H^{\prime },m^{\prime },u^{\prime },\Delta
^{\prime },\varepsilon ^{\prime }\right) $ denotes the starting bialgebra
structure on $H^{\prime }$ while $\Delta _{\Ff }^{\prime }\left(
\cdot\right) =\Ff \Delta ^{\prime }\left( \cdot\right) \Ff ^{-1}.$ Therefore, $\left( f,\Ff \right) :\left( H,\Rr \right) \rightarrow
\left( H^{\prime },\Rr ^{\prime }\right) $ is a twisted
morphism of triangular bialgebras if $f:H\rightarrow H^{\prime }$ is a counitary algebra
map such that 
\begin{equation}\label{eq:deformedcoproduct}
\Ff \Delta ^{\prime }f\left( x\right) \Ff ^{-1}=\left(
f\otimes f\right) \Delta \left( x\right) 
\end{equation}%
and verifies%
\begin{equation}\label{eq:deformedR}
\left( f\otimes f\right) \left( \Rr \right) =\Ff ^{\mathrm{op}}%
\Rr ^{\prime }\Ff ^{-1},
\end{equation}
i.e.\ if $(f,\Ff):H\to H'$ is a morphism in $\TwBialg$ and \eqref{eq:deformedR} is satisfied.
\end{remark}

\begin{remark}
We observe that the composition of twisted morphisms of  triangular bialgebras $\left( f,\Ff \right) :\left( H,\Rr \right) \rightarrow
\left( H^{\prime },\Rr ^{\prime }\right) $ and $\left( f^{\prime },%
\Ff ^{\prime }\right) :\left( H^{\prime },\Rr ^{\prime
}\right) \rightarrow \left( H^{\prime \prime },\Rr ^{\prime \prime
}\right) $ is still a twisted morphism of triangular bialgebras: 
\begin{eqnarray*}
\left( f^{\prime }f\otimes f^{\prime }f\right) \left( \Rr \right) 
&=&\left( f^{\prime }\otimes f^{\prime }\right) \left( f\otimes f\right)
\left( \Rr \right) =\left( f^{\prime }\otimes f^{\prime }\right)
\left( \Ff ^{\mathrm{op}}\mathcal{R'F}^{-1}\right)  \\
&=&\left( f^{\prime }\otimes f^{\prime }\right) \left( \Ff \right) ^{%
\mathrm{op}}\left( f^{\prime }\otimes f^{\prime }\right) \left( \Rr %
^{\prime }\right) \left( f^{\prime }\otimes f^{\prime }\right) \left( 
\Ff \right) ^{-1} \\
&=&\left( f^{\prime }\otimes f^{\prime }\right) \left( \Ff \right) ^{%
\mathrm{op}}\Ff ^{\prime \mathrm{op}}\Rr ^{\prime \prime }%
\Ff ^{\prime -1}\left( f^{\prime }\otimes f^{\prime }\right) \left( 
\Ff \right) ^{-1} \\
&=&((f'\ot f')(\Ff)\Ff')^{\mathrm{op}}\Rr''((f'\ot f')(\Ff)\Ff')^{-1}.
\end{eqnarray*}
Moreover, $\id_{(H,\Rr)}\coloneqq\left( \id _H%
,1\otimes 1\right) :\left( H,\Rr \right) \rightarrow \left( H,%
\Rr \right) $ is clearly a twisted morphism of triangular bialgebras.
\end{remark}

\begin{remark}
    If $H$ and $H'$ are cocommutative bialgebras, one can consider the triangular structures $\Rr=1_{H}\ot1_{H}$ and $\Rr'=1_{H'}\ot_{H'}$. Therefore, a morphism $(f,\Ff):H\to H'$ in $\TwBialg$ is a twisted morphism of triangular bialgebras if and only if $\Ff=\Ff^{\mathrm{op}}$. In particular, $(f,1\ot1):(H,1\ot1)\to(H',1\ot1)$ is a twisted morphism of triangular bialgebras. 
\end{remark}

\begin{example}\label{ex:groupalgebra}
    Consider the abelian group $\Gamma=\langle x,y\ |\ xy=yx,\ x^{n}=1,\ y^{n}=1\rangle$, with $n>1$, and let $q\in\Bbbk$ be a primitive $n$-th root of unity. One can define a morphism $(f,\Ff):\Bbbk\Gamma\to\Bbbk\Gamma$ in $\TwBialg$ as in \cite[Example 3.2]{ShilinZhang}, see also \cite{Pansera}. We point out that in \cite{ShilinZhang} the notion of twist (and, accordingly, that of twisted morphism of bialgebras) is dual with respect to Definition \ref{def:twist}. Thus, here we consider the inverse of the twist considered there. We define $f:\Bbbk\Gamma\to\Bbbk\Gamma$ on $x$ and $y$ as $f(x)=y$ and $f(y)=x$ and we extend it to an algebra automorphism of $\Bbbk\Gamma$. Moreover, defining $\Ff:=\frac{1}{n}\sum_{i,j=0}^{n-1}{q^{-ij}x^{i}\ot y^{-j}}$, we obtain a morphism $(f,\Ff):\Bbbk\Gamma\to\Bbbk\Gamma$ in $\TwBialg$. The bialgebra $\Bbbk\Gamma$ is cocommutative, but we can choose triangular structures $\Rr$  different from $1\ot1$, see e.g.\ \cite[Example 2.1.17]{Majid-book}. If we define $\Rr':=(f\ot f)(\Rr)_{\Ff^{-1}}$ we obtain that $(f,\Ff):(\Bbbk\Gamma,\Rr)\to(\Bbbk\Gamma,\Rr')$ is a (bijective) twisted morphism of triangular bialgebras. 
\end{example}

\begin{example}
\label{exa:Sw1}
Let $\Bbbk$ be a field of characteristic different from 2. The Sweedler Hopf algebra $H$ is the $\Bbbk$-algebra given by generators $g$ and $x$, and relations
\[
g^{2}=1,\quad x^{2}=0,\quad  xg=-gx. 
\]    
This becomes a Hopf algebra with comultiplication, counit and antipode determined by 
\[
\Delta(g)=g\ot g,\ \Delta(x)=x\ot 1+g\ot x,\ \varepsilon(g)=1,\ \varepsilon(x)=0,\ S(g)=g,\ S(x)=-gx.
\]
There is an exhaustive 1-parameter family of triangular structures on $H$ given by 
\[
\Rr_{\lambda}:=\frac{1}{2}(1\ot1+g\ot1+1\ot g-g\ot g)+\frac{\lambda}{2}(x\ot x-xg\ot x+x\ot xg+xg\ot xg),
\]
see e.g.\ \cite[Exercise 2.1.7]{Majid-book}. One can define a morphism $(f,\Ff):H\to H$ in $\TwBialg$ as in \cite[Example 3.1]{ShilinZhang}. Again we point out that in \cite{ShilinZhang} the notion of twist (and, accordingly, that of twisted morphism of bialgebras) is dual with respect to Definition \ref{def:twist}. Thus, here we consider the inverse of the twist considered there. We define $f_{s}:H\to H$ on $g$ and $x$ as $f_{s}(g)=g$ and $f_{s}(x)=sx$, for $s\in\Bbbk$ with $s\not=0$, and we extend it into an algebra automorphism of $H$. Moreover, defining $\Ff_{d}:=1\ot1+d(xg\ot x)$, for $d\in\Bbbk$, we obtain a morphism $(f_{s},\Ff_{d}):H\to H$ in $\TwBialg$. We observe that $(\Rr_{\lambda})_{\Ff}=\Rr_{\lambda+2d}$: 
\[
\begin{split}
    (\Rr_{\lambda})_{\Ff}&=(1\ot1+d(x\ot xg))\Rr_{\lambda}(1\ot 1-d(xg\ot x))\\&=\Rr_{\lambda}-\frac{d}{2}(1\ot1+g\ot 1+1\ot g-g\ot g)(xg\ot x)+\frac{d}{2}(x\ot xg)(1\ot 1+g\ot 1+1\ot g-g\ot g)\\&=\Rr_{\lambda}+d(x\ot xg+x\ot x+xg\ot xg-xg\ot x)=\Rr_{\lambda+2d}.
\end{split}
\]
Therefore, $(f_{s},\Ff_{d}):(H,\Rr_{\lambda})\to(H,\Rr_{\gamma})$ is a twisted morphism of triangular bialgebras if and only if 
$\Rr_{\lambda s^{2}}=(f\ot f)(\Rr_{\lambda})=(\Rr_{\gamma})_{\Ff}=\Rr_{\gamma+2d}$,
i.e.\ if and only if $\lambda s^{2}=\gamma+2d$. 

Hence $(f_{s},\Ff_{d}):(H,\Rr_{\lambda})\to(H,\Rr_{\lambda s^{2}-2d})$ is a (bijective)  twisted morphism of triangular bialgebras. In particular, $(f_{1}=\id ,\Ff_{d}):(H,\Rr_{\lambda})\to (H,\Rr_{\lambda-2d})$ is a (bijective) twisted morphism of triangular bialgebras.
\end{example}

\begin{definition}
We define the 2-category $\TwTrBialg$ where:
\begin{itemize}
    \item[i)] the $0$-cells are triangular bialgebras $(H,\Rr)$,
    \item[ii)] the $1$-cells are twisted morphisms of triangular bialgebras $(f,\Ff):(H,\Rr)\to (H',\Rr')$,
    \item[iii)] the $2$-cells are the gauge transformations $a:(f,\Ff)\Rightarrow (f',\Ff')$ between the underlying twisted morphisms of bialgebras. 
\end{itemize}


\end{definition}
Consider the assignments
\begin{equation}
\label{def:funcF}
F:\TwTrBialg\to\TwBialg,\;(H,\Rr)\mapsto H,\;(f,\Ff )\mapsto (f,\Ff ),\;a\mapsto a.    \end{equation}
Since the identity $1$-cells, the identity $2$-cells, the composition of $2$-cells, and horizontal compositions of $1$-cells and $2$-cells are defined in the same way in the $2$-categories $\TwTrBialg$ and $\TwBialg$, it is clear that $F$ preserves them. Thus it is a $2$-functor, see also \cite[Explanation 4.1.9]{Johnson-Yau-book}.

\begin{invisible}
 Following \cite[page 105]{Johnson-Yau-book}, a $2$-functor is a strict functor between two 2-categories. One the previous line in loc.\ cit.\ a strict functor is defined as a lax functor $(F,F^2,F^0):\Aa\to \Bb$ in which $F^2$ and $F^0$ are identity natural transformations. Here $F:Ob(\Aa)\to Ob(\Bb)$ is a function on objects. For each pair of objects there is a \emph{local functor} \[F_{X,Y}:\Aa(X,Y)\to \Bb(F(X),F(Y)),\;[f:X\to Y]\mapsto [Ff:FX\to FY],\;[\alpha:f\Rightarrow g]\mapsto [F\alpha:Ff\Rightarrow Fg].\] The fact that $F^2$ is the identity natural
transformation just means that $F$ preserves the composition of $1$-cells and the horizontal composition of $2$-cells, while the fact that $F^0$ is the identity natural
transformation just means that $F$ preserves the identity $1$-cell and the identity $2$-cell so that it is a usual functor. 

In our case $F:Ob(\TwTrBialg)\to Ob(\TwBialg),(H,\Rr)\mapsto H$ while the local functor is 
\begin{align*}
   F_{(H,\Rr),(H',\Rr')}:\TwTrBialg((H,\Rr),(H',\Rr'))&\to \TwBialg(H,H'),\\
   [(f,\Ff):(H,\Rr)\to(H',\Rr')]&\mapsto [(f,\Ff):H\to H']\\
   [a:(f,\Ff)\Rightarrow (f',\Ff)']&\mapsto [a:(f,\Ff)\Rightarrow (f',\Ff)'].
\end{align*}

\end{invisible}

\begin{remark}
We observe that, given two morphisms $(f',\Ff'),(f,\Ff):(H,\Rr)\to (H',\Rr')$ in $\TwTrBialg$ and a gauge transformation $a:(f,\Ff)\Rightarrow (f',\Ff')$, we have
\begin{align*}
    (a\otimes a)\Rr'_{\Ff}
&=(a\otimes a)\Ff^\op\Rr'\Ff^{-1}=((a\ot a)\Ff)^{\mathrm{op}}\Rr'\Ff^{-1}=(\Ff'\Delta'(a))^{\mathrm{op}}\Rr'\Ff^{-1}\\=&\,(\Ff')^\op(\Delta')^\op(a)\Rr'\Ff^{-1}
\overset{\eqref{qtr1}}=(\Ff')^\op\Rr'\Delta'(a)\Ff^{-1}=(\Ff')^\op\Rr'(\Ff')^{-1}(a\ot a)=\Rr'_{\Ff'}(a\ot a).
\end{align*}
Hence, the gauge transformation is automatically compatible with the triangular structures involved. Moreover,
if $a\in H^{\times}$, then the bialgebra map $\hat{a}
:=a\left( -\right)  a
^{-1}:H' _{\Ff}\rightarrow H' _{\Ff'}$ is a morphism of triangular bialgebras. 
\end{remark}

\begin{invisible}We keep here the computations we did before discovering \cite{Davydov}.

$i)$\ Let $\left( f,\Ff \right) :\left( H,\Rr \right)
\rightarrow \left( H^{\prime },\Rr ^{\prime }\right) $ and $\left(
f^{\prime },\Ff ^{\prime }\right) :\left( H^{\prime },\Rr %
^{\prime }\right) \rightarrow \left( H^{\prime \prime },\Rr ^{\prime
\prime }\right) $ be twisted morphism of triangular bialgebras. Then $\Ff$ is a twist on $H^{\prime }$, $f:H\rightarrow H_{\Ff %
}^{\prime }$ is a bialgebra map, $\Ff ^{\prime }$ is a Drinfel'd
twist on $H^{\prime \prime }$ and $f^{\prime }:H^{\prime }\rightarrow H_{%
\Ff }^{\prime \prime }$ is a bialgebra map. Let us check that $%
\Ff ^{\prime \prime }:=\left( f^{\prime }\otimes f^{\prime }\right)
\left( \Ff \right) \Ff ^{\prime }$ is a twist on $%
H^{\prime \prime }.$%
\begin{eqnarray*}
\left( \Ff ^{\prime \prime }\otimes 1\right) \left( \Delta ^{\prime
\prime }\otimes \id \right) \left( \Ff ^{\prime \prime
}\right)  &=&\left( \left( f^{\prime }\otimes f^{\prime }\right) \left( 
\Ff \right) \Ff ^{\prime }\otimes 1\right) \left( \Delta
^{\prime \prime }\otimes \id \right) \left( \left( f^{\prime }\otimes
f^{\prime }\right) \left( \Ff \right) \Ff ^{\prime }\right)  \\
&=&\left( \left( f^{\prime }\otimes f^{\prime }\right) \left( \Ff %
\right) \otimes 1\right) \underleftrightarrow{\left( \Ff ^{\prime
}\otimes 1\right) \left( \Delta ^{\prime \prime }\otimes \id \right)
\left( f^{\prime }\otimes f^{\prime }\right) \left( \Ff \right) }%
\left( \Delta ^{\prime \prime }\otimes \id \right) \left( \Ff %
^{\prime }\right)  \\
&=&\left( \left( f^{\prime }\otimes f^{\prime }\right) \left( \Ff %
\right) \otimes 1\right) \left( f^{\prime }\otimes f^{\prime }\otimes
f^{\prime }\right) \left( \Delta ^{\prime }\otimes \id \right) \left( 
\Ff \right) \left( \Ff ^{\prime }\otimes 1\right) \left(
\Delta ^{\prime \prime }\otimes \id \right) \left( \Ff %
^{\prime }\right)  \\
&=&\left( f^{\prime }\otimes f^{\prime }\otimes f^{\prime }\right) \left[
\left( \Ff \otimes 1\right) \left( \Delta ^{\prime }\otimes \mathrm{Id%
}\right) \left( \Ff \right) \right] \left( \Ff ^{\prime
}\otimes 1\right) \left( \Delta ^{\prime \prime }\otimes \id \right)
\left( \Ff ^{\prime }\right)  \\
&=&\left( f^{\prime }\otimes f^{\prime }\otimes f^{\prime }\right) \left[
\left( 1\otimes \Ff \right) \left( \id \otimes \Delta ^{\prime
}\right) \left( \Ff \right) \right] \left( 1\otimes \Ff %
^{\prime }\right) \left( \id \otimes \Delta ^{\prime \prime }\right)
\left( \Ff ^{\prime }\right)  \\
&=&\left( 1\otimes \left( f^{\prime }\otimes f^{\prime }\right) \left( 
\Ff \right) \right) \underleftrightarrow{\left( f^{\prime }\otimes
f^{\prime }\otimes f^{\prime }\right) \left( \id \otimes \Delta
^{\prime }\right) \left( \Ff \right) \left( 1\otimes \Ff %
^{\prime }\right) }\left( \id \otimes \Delta ^{\prime \prime }\right)
\left( \Ff ^{\prime }\right)  \\
&=&\left( 1\otimes \left( f^{\prime }\otimes f^{\prime }\right) \left( 
\Ff \right) \right) \left( 1\otimes \Ff ^{\prime }\right)
\left( \id \otimes \Delta ^{\prime \prime }\right) \left( f^{\prime
}\otimes f^{\prime }\right) \left( \Ff \right) \left( \id %
\otimes \Delta ^{\prime \prime }\right) \left( \Ff ^{\prime }\right) 
\\
&=&\left( 1\otimes \Ff ^{\prime \prime }\right) \left( \id %
\otimes \Delta ^{\prime \prime }\right) \left( \Ff ^{\prime \prime
}\right) 
\end{eqnarray*}%
We also have%
\begin{eqnarray*}
\left( \varepsilon ^{\prime \prime }\otimes \id \right) \left( 
\Ff ^{\prime \prime }\right)  &=&\left( \varepsilon ^{\prime \prime
}\otimes \id \right) \left( \left( f^{\prime }\otimes f^{\prime
}\right) \left( \Ff \right) \Ff ^{\prime }\right)  \\
&=&\left( \varepsilon ^{\prime \prime }\otimes \id \right) \left(
f^{\prime }\otimes f^{\prime }\right) \left( \Ff \right) \left(
\varepsilon ^{\prime \prime }\otimes \id \right) \left( \Ff %
^{\prime }\right)  \\
&=&f^{\prime }\left( \varepsilon ^{\prime }\otimes \id \right) \left( 
\Ff \right) 1^{\prime }=f^{\prime }\left( 1\right) 1^{\prime
}=1^{\prime }
\end{eqnarray*}%
and%
\begin{eqnarray*}
\left( \id \otimes \varepsilon ^{\prime \prime }\right) \left( 
\Ff ^{\prime \prime }\right)  &=&\left( \id \otimes
\varepsilon ^{\prime \prime }\right) \left( \left( f^{\prime }\otimes
f^{\prime }\right) \left( \Ff \right) \Ff ^{\prime }\right)  \\
&=&\left( \id \otimes \varepsilon ^{\prime \prime }\right) \left(
f^{\prime }\otimes f^{\prime }\right) \left( \Ff \right) \left( 
\id \otimes \varepsilon ^{\prime \prime }\right) \Ff ^{\prime }
\\
&=&f^{\prime }\left( \id \otimes \varepsilon ^{\prime }\right) \left( 
\Ff \right) 1^{\prime }=f^{\prime }\left( 1\right) 1^{\prime
}=1^{\prime }.
\end{eqnarray*}%
Thus $\Ff ^{\prime }$ is a twist. Clearly $f^{\prime
}f:H\rightarrow H^{\prime \prime }$ is a counitary algebra map. Moreover,
we have 
\begin{eqnarray*}
\left( f^{\prime }f\otimes f^{\prime }f\right) \Delta \left( x\right) 
&=&\left( f^{\prime }\otimes f^{\prime }\right) \left( f\otimes f\right)
\Delta \left( x\right) =\left( f^{\prime }\otimes f^{\prime }\right) \left( 
\Ff \Delta ^{\prime }f\left( x\right) \Ff ^{-1}\right)  \\
&=&\left( f^{\prime }\otimes f^{\prime }\right) \left( \Ff \right)
\left( f^{\prime }\otimes f^{\prime }\right) \left( \Delta ^{\prime }f\left(
x\right) \right) \left( f^{\prime }\otimes f^{\prime }\right) \left( 
\Ff ^{-1}\right)  \\
&=&\left( f^{\prime }\otimes f^{\prime }\right) \left( \Ff \right) 
\Ff ^{\prime }\left( \Delta ^{\prime \prime }f^{\prime }f\left(
x\right) \right) \left( \Ff ^{\prime }\right) ^{-1}\left( f^{\prime
}\otimes f^{\prime }\right) \left( \Ff ^{-1}\right) 
\end{eqnarray*}%
so that $\left( f^{\prime }f,\left( f^{\prime }\otimes f^{\prime }\right)
\left( \Ff \right) \Ff ^{\prime }\right) :\left( H,\Rr %
\right) \rightarrow \left( H^{\prime \prime },\Rr ^{\prime \prime
}\right) $ is a twisted  morphism of triangular bialgebras once proved the
compatibility with the quasitiangular structure. Indeed we have%
\begin{eqnarray*}
\left( f^{\prime }f\otimes f^{\prime }f\right) \left( \Rr \right) 
&=&\left( f^{\prime }\otimes f^{\prime }\right) \left( f\otimes f\right)
\left( \Rr \right) =\left( f^{\prime }\otimes f^{\prime }\right)
\left( \Ff ^{\mathrm{op}}\mathcal{R'F}^{-1}\right)  \\
&=&\left( f^{\prime }\otimes f^{\prime }\right) \left( \Ff \right) ^{%
\mathrm{op}}\left( f^{\prime }\otimes f^{\prime }\right) \left( \Rr %
^{\prime }\right) \left( f^{\prime }\otimes f^{\prime }\right) \left( 
\Ff \right) ^{-1} \\
&=&\left( f^{\prime }\otimes f^{\prime }\right) \left( \Ff \right) ^{%
\mathrm{op}}\Ff ^{\prime \mathrm{op}}\Rr ^{\prime \prime }%
\Ff ^{\prime -1}\left( f^{\prime }\otimes f^{\prime }\right) \left( 
\Ff \right) ^{-1} \\
&=&\Ff ^{\prime \prime \mathrm{op}}\Rr ^{\prime \prime }%
\Ff ^{\prime \prime -1}.
\end{eqnarray*}

$ii)$ Let us check associativity. We compute%
\begin{eqnarray*}
\left( f^{\prime \prime },\Ff ^{\prime \prime }\right) \circ \left(
\left( f^{\prime },\Ff ^{\prime }\right) \circ \left( f,\Ff %
\right) \right)  &=&\left( f^{\prime \prime },\Ff ^{\prime \prime
}\right) \circ \left( f^{\prime }f,\left( f^{\prime }\otimes f^{\prime
}\right) \left( \Ff \right) \Ff ^{\prime }\right)  \\
&=&\left( f^{\prime \prime }f^{\prime }f,\left( f^{\prime \prime }\otimes
f^{\prime \prime }\right) \left[ \left( f^{\prime }\otimes f^{\prime
}\right) \left( \Ff \right) \Ff ^{\prime }\right] \Ff %
^{\prime \prime }\right)  \\
&=&\left( f^{\prime \prime }f^{\prime }f,\left( f^{\prime \prime }f^{\prime
}\otimes f^{\prime \prime }f^{\prime }\right) \left( \Ff \right)
\left( f^{\prime \prime }\otimes f^{\prime \prime }\right) \left( \Ff %
^{\prime }\right) \Ff ^{\prime \prime }\right)  \\
&=&\left( f^{\prime \prime }f^{\prime },\left( f^{\prime \prime }\otimes
f^{\prime \prime }\right) \left( \Ff ^{\prime }\right) \Ff %
^{\prime \prime }\right) \circ \left( f,\Ff \right)  \\
&=&\left( \left( f^{\prime \prime },\Ff ^{\prime \prime }\right)
\circ \left( f^{\prime },\Ff ^{\prime }\right) \right) \circ \left( f,%
\Ff \right) .
\end{eqnarray*}%
The rest of the proof is trivial.
\end{invisible}

\section{Binary products in the \texorpdfstring{$2$}{2}-category \texorpdfstring{$\TwTrBialg$}{TwTr}}\label{sec:binaryproduct}
We are now going to show that in the 2-category $\TwTrBialg$, the tensor product \eqref{def:tensorproductquasi} constitutes the binary product. Naturally, this requires us to consider the concept of binary product within the context of $2$-categories, see e.g.\ \cite[\S 2.2]{CKWW2}.

\begin{definition}
A \emph{product of two objects} $A,B$ in a $2$-category $\Cc$ is an
object $A\times B$ together with projections $p:A\times B\rightarrow A$ and $%
q:A\times B\rightarrow B$ such that, for any object $X$ in $\Cc$,
the functor
\[\Cc(X,A\times B)\to\Cc(X,A)\times \Cc(X,B),\,h\mapsto (ph,qh),\,\left[\xymatrix{X\rtwocell^{h}_{k}{\gamma}&A\times B}\right]\mapsto \left(\xymatrix{X\rtwocell^{ph}_{pk}{p\gamma}&A},\xymatrix{X\rtwocell^{qh}_{qk}{q\gamma}&B}\right),\] 
is an equivalence of categories (essentially surjective on objects and fully faithful).\\ Here $p\gamma\coloneqq \id_p\hc \gamma$, where we use the horizontal composition.

Explicitly, this means that:
\begin{itemize}
    \item[1)] for any $f:X\rightarrow A$ and $g:X\rightarrow B$, there exists a morphism $h:X\rightarrow A\times B$ and
isomorphisms $ph\cong f$ and $qh\cong g$; \medskip
\begin{equation*}
\xymatrix{&X\ar[ld]_{f}\ar[rd]^{g}\ar@{.>}[d]|{h}\\ A&
A\times B\ar[l]_{p}\ar[r]^{q}& B}
\end{equation*}%
    \item[2)] for any $h,k:X\rightarrow
A\times B$ and $2$-cells $\alpha:ph\Rightarrow pk$ and $\beta
:qh\Rightarrow qk$, there exists a unique $\gamma :h\Rightarrow k$ such
that $p\gamma =\alpha $ and $q\gamma=\beta $.
\[\xymatrix{ & X\ar[dl]_{ph} \ar[d]^-{k} \ar[dr]^{qh} & \\
A  & A\times B \ar[l]^{p}\ar[r]_{q}  \ar@{}[ur]|(.19){}="1"|(.42){}="2" \ar@{=>}"2";"1"^(.3){\beta}
\ar@{}[ul]|(.19){}="1"|(.42){}="2" \ar@{=>}"2";"1"^(.3){\alpha}& B
}\]
\end{itemize}
\end{definition}

Next aim is to prove that $\TwTrBialg$ has binary products. This will be done in successive steps. Note that in the context of $\TwTrBialg$ the functor above is even surjective on objects.

\begin{proposition}
\label{pro:Tw2cart1} 
Given $\left( f_{1},\Ff _{1}\right) :\left( H,\Ss \right)
\rightarrow \left( H_{1},\Rr _{1}\right) $ and $\left( f_{2},\Ff_{2}\right) :\left( H,\Ss \right) \rightarrow \left( H_{2},\Rr_{2}\right) $ any morphisms in $\TwTrBialg$, there is a morphism $\langle(f_1,\Ff_1),(f_2,\Ff_2)\rangle:\left( H,\Ss \right) \rightarrow
\left( H_{1},\Rr _{1}\right) \otimes \left( H_{2},\Rr %
_{2}\right) $ in $\TwTrBialg$ (the \emph{diagonal morphism}) such that the diagram
\begin{equation*}
\xymatrixcolsep{2cm}\xymatrixrowsep{1.3cm}\xymatrix{&(H,\Ss)\ar[ld]_{(f_1,%
\Ff_1)}\ar[rd]^{(f_2,\Ff_2)}\ar@{.>}[d]|{\langle(f_1,\Ff_1),(f_2,\Ff_2)\rangle}\\ (H_1,\Rr_1)&
(H_1,\Rr_1)\otimes (H_2,\Rr_2)\ar[l]_{(\id\otimes\varepsilon,1\otimes
1)}\ar[r]^{(\varepsilon\otimes\id,1\otimes 1)}& (H_2,\Rr_2)}
\end{equation*}%
commutes, namely 
\[\langle(f_1,\Ff_1),(f_2,\Ff_2)\rangle\coloneqq \big(\left( f_{1}\otimes f_{2}\right) \Delta _{H}\;,\;\left( 1_{H_{1}}\otimes \left( f_{2}\otimes
f_{1}\right) \left( \Ss ^{-1}\right) \otimes 1_{H_{2}}\right) \left(
\id \otimes \tau \otimes \id \right) \left( \Ff %
_{1}\otimes \Ff _{2}\right) \big).\]
\end{proposition}

\begin{proof}
    Set $p_{1}:=\id \otimes \varepsilon $ and $p_{2}:=\varepsilon \otimes
\id $. Observe that $p_{1}$ and $p_{2}$ are counitary algebra maps. Moreover, we
have
\begin{equation*}
(p_{1}\otimes p_{1})\Delta _{H_{1}\otimes H_{2}}(a\otimes
b)=a_{1}\varepsilon (b_{1})\otimes a_{2}\varepsilon (b_{2})=a_{1}\otimes
a_{2}\varepsilon (b)=(1\otimes 1)\Delta (a\varepsilon (b))(1\otimes 1)^{-1}
\end{equation*}%
and also
\begin{equation*}
(p_{1}\otimes p_{1})(\id \otimes \tau \otimes \id )(\Rr %
_{1}\otimes \Rr _{2})=(\id \otimes \id \otimes
\varepsilon \otimes \varepsilon )(\Rr _{1}\otimes \Rr _{2})=%
\Rr _{1}
\end{equation*}%
so that $(p_{1},1\otimes 1)$ is a morphism in $\TwTrBialg$. Similarly, $%
(p_{2},1\otimes 1)$ is a morphism in $\TwTrBialg$.

Note that $(f_2\otimes f_1)(\Ss)\in
H_2\otimes H_1$ is a weak $\Rr$-matrix of $((H_2)_{\Ff_2},(H_1)_{\Ff_1})$ by Example \ref{exa:weakR} applied to the bialgebra maps $f_1:H\to (H_1)_{\Ff_1}$ and $%
f_2:H\to (H_2)_{\Ff_2}$. Hence, using Lemma \ref{lem:Drinfeldtwist}, we
conclude that $\Ff \coloneqq \left( 1_{H_{1}}\otimes \left( f_{2}\otimes
f_{1}\right) \left( \Ss ^{-1}\right) \otimes 1_{H_{2}}\right) \left(
\id \otimes \tau \otimes \id \right) \left( \Ff %
_{1}\otimes \Ff _{2}\right) $ is a twist on $H_{1}\otimes H_{2}$. We prove that $(f,\Ff )$ is a twisted morphism of triangular
bialgebras for $f:=(f_{1}\otimes f_{2})\Delta_{H}$. Clearly $f$ is a counitary
algebra map since $f_{1}$, $f_{2}$ and $\Delta_{H}$ are so. Moreover, we compute
\begin{align*}
\Delta& _{H_1\otimes H_2}f\left( x\right) =\Delta _{H_1\otimes
H_2}\left( f_{1}\left( x_{1}\right) \otimes f_{2}\left( x_{2}\right)
\right) \\
&=\left( f_{1}\left( x_{1}\right) _{1}\otimes f_{2}\left( x_{2}\right)
_{1}\otimes f_{1}\left( x_{1}\right) _{2}\otimes f_{2}\left( x_{2}\right)
_{2}\right) \\
&=\overline{\Ff _{1}}^{i}f_{1}\left( x_{11}\right) \left( \Ff %
_{1}\right) ^{i}\otimes \overline{\Ff _{2}}^{i}f_{2}\left(
x_{21}\right) \left( \Ff _{2}\right) ^{i}\otimes \overline{\Ff %
_{1}}_{i}f_{1}\left( x_{12}\right) \left( \Ff _{1}\right) _{i}\otimes
\overline{\Ff _{2}}_{i}f_{2}\left( x_{22}\right) \left( \Ff %
_{2}\right) _{i} \\
&=\left( \overline{\Ff _{1}}^{i}\otimes \overline{\Ff _{2}}%
^{i}\otimes \overline{\Ff _{1}}_{i}\otimes \overline{\Ff _{2}}%
_{i}\right) \left( f_{1}\left( x_{1}\right) \otimes f_{2}\left( x_{3}\right)
\otimes f_{1}\left( x_{2}\right) \otimes f_{2}\left( x_{4}\right) \right)
\left( \left( \Ff _{1}\right) ^{i}\otimes \left( \Ff %
_{2}\right) ^{i}\otimes \left( \Ff _{1}\right) _{i}\otimes \left(
\Ff _{2}\right) _{i}\right) \\
&=
\left( \id \otimes \tau \otimes \id \right) \left( \overline{%
\Ff _{1}}^{i}\otimes \overline{\Ff _{1}}_{i}\otimes \overline{%
\Ff _{2}}^{i}\otimes \overline{\Ff _{2}}_{i}\right) \\
&\hspace{.5cm}\left( f_{1}\left( x_{1}\right) \otimes f_{2}\left( \Ss ^{i}x_{2}%
\overline{\Ss }^{i}\right) \otimes f_{1}\left( \Ss _{i}x_{3}%
\overline{\Ss }_{i}\right) \otimes f_{2}\left( x_{4}\right) \right)
\\
&\hspace{.5cm}\left( \id \otimes \tau \otimes \id \right) \left( \left(
\Ff _{1}\right) ^{i}\otimes \left( \Ff _{1}\right) _{i}\otimes
\left( \Ff _{2}\right) ^{i}\otimes \left( \Ff _{2}\right)
_{i}\right)\\
&=
\left( \id \otimes \tau \otimes \id \right) \left( \Ff %
_{1}^{-1}\otimes \Ff _{2}^{-1}\right) \\
&\hspace{.5cm}\big(f_{1}\left( x_{1}\right) \otimes f_{2}\left( \Ss ^{i}\right)
f_{2}\left( x_{2}\right) f_{2}\left( \overline{\Ss }^{i}\right)
\otimes f_{1}\left( \Ss _{i}\right) f_{1}\left( x_{3}\right)
f_{1}\left( \overline{\Ss }_{i}\right) \otimes f_{2}\left(
x_{4}\right)\big) \\
&\hspace{.5cm} \left( \id \otimes \tau \otimes \id \right) \left( \Ff %
_{1}\otimes \Ff _{2}\right)\\
&=
\big(\left( \id \otimes \tau \otimes \id \right) \left( \Ff %
_{1}^{-1}\otimes \Ff _{2}^{-1}\right)\big) \left( 1\otimes f_{2}\left(
\Ss ^{i}\right) \otimes f_{1}\left( \Ss _{i}\right) \otimes
1\right) \\
&\hspace{.5cm}\left( f_{1}\left( x_{1}\right) \otimes f_{2}\left( x_{2}\right) \otimes
f_{1}\left( x_{3}\right) \otimes f_{2}\left( x_{4}\right) \right) \\
&\hspace{.5cm}\left( 1\otimes f_{2}\left( \overline{\Ss }^{i}\right) \otimes
f_{1}\left( \overline{\Ss }_{i}\right) \otimes 1\right) \big(\left(
\id \otimes \tau \otimes \id \right) \left( \Ff %
_{1}\otimes \Ff _{2}\right)\big)\\
&=
\big(\left( \id \otimes \tau \otimes \id \right) \left( \Ff %
_{1}^{-1}\otimes \Ff _{2}^{-1}\right)\big) \left( 1\otimes f_{2}\left(
\Ss ^{i}\right) \otimes f_{1}\left( \Ss _{i}\right) \otimes
1\right)\left( f\left( x_{1}\right) \otimes f\left( x_{2}\right) \right) \\
&\hspace{.5cm}\left( 1\otimes f_{2}\left( \overline{\Ss }^{i}\right) \otimes
f_{1}\left( \overline{\Ss }_{i}\right) \otimes 1\right) \big(\left(
\id \otimes \tau \otimes \id \right) \left( \Ff %
_{1}\otimes \Ff _{2}\right)\big)\\
&=
\big(\left( \id \otimes \tau \otimes \id \right) \left( \Ff %
_{1}^{-1}\otimes \Ff _{2}^{-1}\right)\big) \left( 1\otimes \left(
f_{2}\otimes f_{1}\right) \left( \Ss \right) \otimes 1\right) \left( f\otimes f\right) \Delta \left( x\right) \\
&\hspace{.5cm}  \left( 1\otimes \left( f_{2}\otimes f_{1}\right) \left( \Ss %
^{-1}\right) \otimes 1\right) \big(\left( \id \otimes \tau \otimes \id\right) \left( \Ff _{1}\otimes \Ff _{2}\right)\big)\\
&=\Ff ^{-1}\left( f\otimes f\right) \Delta \left( x\right)\Ff 
\end{align*}
so $\Delta _{H\otimes H^{\prime }}f\left( x\right) =\Ff %
^{-1}\left( f\otimes f\right) \Delta \left( x\right) \Ff $ i.e.\ $%
\left( \Delta _{H\otimes H^{\prime }}\right) _{\Ff }f\left( x\right)
=\left( f\otimes f\right) \Delta \left( x\right) $.

Here is the first place where we will use that $\left( H,\Ss \right) $
is triangular and not just quasitriangular. We compute
\begin{eqnarray*}
\left( f\otimes f\right) \left( \Ss \right) &=&\left( f_{1}\otimes
f_{2}\otimes f_{1}\otimes f_{2}\right) \left( \Delta _{H}\otimes \Delta
_{H}\right) \left( \Ss \right) \\
&\overset{\eqref{eq:DDR}}=&
f_1(\Ss^{i}\Ss
^{k})\otimes f_2(\Ss^{j}\Ss^{h})\otimes f_1(\Ss_{k}\Ss_{h})\otimes f_2(\Ss_{i}\Ss_{j})\\
&=&f_{1}\left( \Ss ^{i}\right) \underleftrightarrow{f_{1}\left(
\Ss ^{k}\right) }\otimes \underbrace{f_{2}\left( \Ss %
^{j}\right) }f_{2}\left( \Ss ^{h}\right) \otimes \underleftrightarrow{%
f_{1}\left( \Ss _{k}\right) }f_{1}\left( \Ss _{h}\right)
\otimes f_{2}\left( \Ss _{i}\right) \underbrace{f_{2}\left( \Ss_{j}\right) } \\
&\overset{\eqref{eq:deformedR}}=&f_{1}\left( \Ss ^{i}\right) \left( \Ff _{1}\right)
_{i}\left( \Rr _{1}\right) ^{k}\overline{\left( \Ff %
_{1}\right) }^{i}\otimes \left( \Ff _{2}\right) _{j}\left( \Rr %
_{2}\right) ^{k}\overline{\left( \Ff _{2}\right) }^{j}f_{2}\left(
\Ss ^{h}\right) \\&\otimes& \left( \Ff _{1}\right) ^{i}\left(
\Rr _{1}\right) _{k}\overline{\left( \Ff _{1}\right) }%
_{i}f_{1}\left( \Ss _{h}\right) \otimes f_{2}\left( \Ss %
_{i}\right) \left( \Ff _{2}\right) ^{j}\left( \Rr _{2}\right)
_{k}\overline{\left( \Ff _{2}\right) }_{j} \\
&=&
\left( f_{1}\left( \Ss ^{i}\right) \otimes 1_{H_{2}}\otimes
1_{H_{1}}\otimes f_{2}\left( \Ss _{i}\right) \right) \left( \left(
\Ff _{1}\right) _{i}\otimes \left( \Ff _{2}\right) _{j}\otimes
\left( \Ff _{1}\right) ^{i}\otimes \left( \Ff _{2}\right)
^{j}\right) \\
&&\left( \left( \Rr _{1}\right) ^{k}\otimes \left( \Rr %
_{2}\right) ^{k}\otimes \left( \Rr _{1}\right) _{k}\otimes \left(
\Rr _{2}\right) _{k}\right) \\
&&\left( \overline{\left( \Ff \right) _{1}}^{i}\otimes \overline{\left(
\Ff _{2}\right) }^{j}\otimes \overline{\left( \Ff _{1}\right) }%
_{i}\otimes \overline{\left( \Ff _{2}\right) }_{j}\right) \left(
1_{H_{1}}\otimes f_{2}\left( \Ss ^{h}\right) \otimes f_{1}\left(
\Ss _{h}\right) \otimes 1_{H_{2}}\right)\\
&\overset{(*)}{=}&
\left( 1_{H_{1}}\otimes f_{2}\left( \Ss _{i}\right) \otimes
f_{1}\left( \Ss ^{i}\right) \otimes 1_{H_{2}}\right)^{\mathrm{op}}
\left( \left( \Ff _{1}\right) ^{i}\otimes \left( \Ff %
_{2}\right) ^{j}\otimes \left( \Ff _{1}\right) _{i}\otimes \left(
\Ff _{2}\right) _{j}\right) ^{\mathrm{op}} 
\\
&&\left( \left( \Rr _{1}\right) ^{k}\otimes \left( \Rr %
_{2}\right) ^{k}\otimes \left( \Rr _{1}\right) _{k}\otimes \left(
\Rr _{2}\right) _{k}\right) \\
&&\left( \overline{\left( \Ff \right) _{1}}^{i}\otimes \overline{\left(
\Ff _{2}\right) }^{j}\otimes \overline{\left( \Ff _{1}\right) }%
_{i}\otimes \overline{\left( \Ff _{2}\right) }_{j}\right) \left(
1_{H_{1}}\otimes f_{2}\left( \Ss ^{h}\right) \otimes f_{1}\left(
\Ss _{h}\right) \otimes 1_{H_{2}}\right)\\
&=&
\left( 1_{H_{1}}\otimes \left( f_{2}\otimes f_{1}\right) \left( \Ss ^{%
\mathrm{op}}\right) \otimes 1_{H_{2}}\right) ^{\mathrm{op}}\big(\left( \id \otimes \tau \otimes \id \right) \left( \Ff _{1}\otimes
\Ff _{2}\right) ^{\mathrm{op}}\big) \\
&&\left( \id _{H_{1}}\otimes \tau \otimes \id _{H_{2}}\right)
\left( \Rr _{1}\otimes \Rr _{2}\right) \\
 &&\big(\left( \id \otimes \tau \otimes \id \right) \left(
\Ff _{1}^{-1}\otimes \Ff _{2}^{-1}\right)\big) \left(
1_{H_{1}}\otimes \left( f_{2}\otimes f_{1}\right) \left( \Ss \right)
\otimes 1_{H_{2}}\right)\\
&\overset{\eqref{tr}}{=}&
\left( 1_{H_{1}}\otimes \left( f_{2}\otimes f_{1}\right) \left( \Ss %
^{-1}\right) \otimes 1_{H_{2}}\right) ^{\mathrm{op}}\big(\left( \id %
\otimes \tau \otimes \id \right) \left( \Ff _{1}\otimes
\Ff _{2}\right) ^{\mathrm{op}}\big) \\
&&\left( \id _{H_{1}}\otimes \tau \otimes \id _{H_{2}}\right)
\left( \Rr _{1}\otimes \Rr _{2}\right) \\
&&\big(\left( \id \otimes \tau \otimes \id \right) \left(
\Ff _{1}^{-1}\otimes \Ff _{2}^{-1}\right)\big) \left(
1_{H_{1}}\otimes \left( f_{2}\otimes f_{1}\right) \left( \Ss \right)
\otimes 1_{H_{2}}\right)\\
&=&
\left( \left( 1_{H_{1}}\otimes \left( f_{2}\otimes f_{1}\right) \left(
\Ss ^{-1}\right) \otimes 1_{H_{2}}\right) \left( \id \otimes
\tau \otimes \id \right) \left( \Ff _{1}\otimes \Ff %
_{2}\right) \right) ^{\mathrm{op}} \\
&&\left( \id _{H_{1}}\otimes \tau \otimes \id _{H_{2}}\right)
\left( \Rr _{1}\otimes \Rr _{2}\right) \\
&&\left( \left( 1_{H_{1}}\otimes \left( f_{2}\otimes f_{1}\right) \left(
\Ss ^{-1}\right) \otimes 1_{H_{2}}\right) \big(\left( \id \otimes
\tau \otimes \id \right) \left( \Ff _{1}\otimes \Ff %
_{2}\right)\big) \right) ^{-1}\\
&=&\Ff ^{\mathrm{op}}\big(\left( \id _{H_{1}}\otimes \tau \otimes
\id _{H_{2}}\right) \left( \Rr _{1}\otimes \Rr %
_{2}\right)\big) \Ff ^{-1},
\end{eqnarray*}
where the notation ``$\op$''
 in $(*)$ refers to the tensor product of the bialgebra $H_{1}\ot H_{2}$, hence, for instance, $\left( 1_{H_{1}}\otimes f_{2}\left( \Ss _{i}\right) \otimes
f_{1}\left( \Ss ^{i}\right) \otimes 1_{H_{2}}\right)^{\mathrm{op}}=\tau_{H_{1}\ot H_{2},H_{1}\ot H_{2}}\left( 1_{H_{1}}\otimes f_{2}\left( \Ss _{i}\right) \otimes
f_{1}\left( \Ss ^{i}\right) \otimes 1_{H_{2}}\right)$.
Thus, we have that $(f,\Ff)$ is a twisted morphism of triangular
bialgebras.
We compute
\begin{equation*}
\begin{split}
(p_{1},1\otimes 1)& \circ (f,\Ff )=\Big(p_{1}(f_{1}\otimes
f_{2})\Delta ,(p_{1}\otimes p_{1})(\Ff )(1\otimes 1)\Big) \\
& =\Big(f_{1}p_{1}\Delta ,\big((p_{1}\otimes p_{1})\left( 1\otimes \left(
f_{2}\otimes f_{1}\right) \left( \Ss ^{-1}\right) \otimes 1\right)\big)
\big((p_{1}\otimes p_{1})\left( \id \otimes \tau \otimes \id %
\right) \left( \Ff _{1}\otimes \Ff _{2}\right)\big) \Big) \\
& =\Big(f_{1},(1\otimes (\varepsilon \otimes f_{1})(\Ss ^{-1}))\big((%
\id \otimes \id \otimes \varepsilon \otimes \varepsilon )(%
\Ff _{1}\otimes \Ff _{2})\big)\Big) \\
& =\big(f_{1},(1\otimes f_{1}(1))\Ff _{1}\big) 
=\big(f_{1},\Ff _{1}\big).
\end{split}%
\end{equation*}%
Analogously, one can prove that $(p_{2},1\otimes
1)\circ (f,\Ff )=(f_{2},\Ff _{2})$. Thus, the diagram in the
statement commutes.
\end{proof}

\begin{remark}
    As we said before, condition \eqref{tr} is used to obtain $(f\ot f)(\Ss)=\Ff^{\mathrm{op}}\widetilde{\Rr}\Ff^{-1}$. We observe that the latter equality is satisfied if and only if $(f_{2}\ot f_{1})(\Ss^{\mathrm{op}})=(f_{2}\ot f_{1})(\Ss^{-1})$. Since we need this condition for arbitrary $f_1$ and $f_2$ , it should hold, in particular, when $f_1=\id=f_2$, which forces $\Ss^{\op}=\Ss^{-1}$.
\end{remark}

\begin{proposition}
\label{pro:Tw2cart2} 
Let $(f,\Ff),\left( f^{\prime },\Ff ^{\prime }\right):(H,\Ss) \to (H_1,\Rr_1)\ot (H_2,\Rr_2)$ be any morphisms in $\TwTrBialg$
together with $2$-cells
\begin{align*}
 g_{1} &:\left(
\id\otimes\varepsilon,1\otimes 1\right) \circ \left( f,\Ff \right)  \Rightarrow \left(
\id\otimes\varepsilon,1\otimes 1\right) \circ \left( f^{\prime },\Ff ^{\prime
}\right)   \\
g_{2} &:\left(
\varepsilon\otimes\id,1\otimes 1\right) \circ \left( f,\Ff \right) \Rightarrow \left(
\varepsilon\otimes\id,1\otimes 1\right) \circ \left( f^{\prime },\Ff ^{\prime
}\right).
\end{align*}
Then, there is a unique $2$-cell $g:\left( f,\Ff \right)
\Rightarrow \left( f^{\prime },\Ff ^{\prime }\right) $ such that 
\[%
\left( \id\otimes\varepsilon,1\otimes 1\right) g=g_{1}\qquad \text{and} \qquad\left( \varepsilon\otimes\id,1\otimes 1\right)
g=g_{2},\] explicitly given by $g=(\id\ot\varepsilon\ot\varepsilon\ot\id)((\Ff')^{-1})(g_1\otimes g_2)(\id\ot\varepsilon\ot\varepsilon\ot\id)(\Ff)$.     
\end{proposition}

\begin{proof}
As above, we set $p_{1}:=\id \otimes \varepsilon $ and $p_{2}:=\varepsilon \otimes
\id $. 
We are looking for a unique $g:\left( f,\Ff \right) \Rightarrow
\left( f^{\prime },\Ff ^{\prime }\right) $ such that $\left(
p_{1},1\otimes 1\right) g=g_{1}$ and $\left( p_{2},1\otimes 1\right) g=g_{2}.
$

Here $\left( p_{1},1\otimes 1\right) g=\id _{\left( p_{1},1\otimes
1\right) }\hc g=p_{1}\left( g\right) $ and similarly $\left(
p_{2},1\otimes 1\right) g=p_{2}\left( g\right) .$

A posteriori, we must have
$\left( g\otimes g\right) \Ff =\Ff ^{\prime }\Delta_{H_{1}\ot H_{2}} \left(
g\right)$ and if we apply $p_{1}\otimes p_{2}$ to this equality, we get%
\begin{equation*}
\left( p_{1}\left( g\right) \otimes p_{2}\left( g\right) \right) \left(
p_{1}\otimes p_{2}\right) \left( \Ff \right) =\left( p_{1}\otimes
p_{2}\right) \left( \Ff ^{\prime }\right) \big(\left( p_{1}\otimes
p_{2}\right) \Delta_{H_{1}\ot H_{2}} \left( g\right)\big)
\end{equation*}%
i.e.\ 
$(g_{1}\otimes g_{2})\Gg=\Gg ^{\prime }g$, where $\Gg \coloneqq (p_{1}\ot p_{2})(\Ff)$ and $\Gg ^{\prime }\coloneqq(p_{1}\ot p_{2})(\Ff')$. Therefore, we must have $g=\left( \Gg ^{\prime }\right) ^{-1}\left( g_{1}\otimes
g_{2}\right) \Gg$ which only depends on $\Ff,\Ff^{\prime },g_{1}$ and $g_{2}
$. Thus, this $g$ is the unique candidate. Let us check it really
defines a $2$-cell $g:\left( f,\Ff \right) \Rightarrow \left(
f^{\prime },\Ff ^{\prime }\right) .$ 
To this aim it is enough to notice that it can be obtained as the vertical composition of the following $2$-cells
\[\xymatrix{(f,\Ff)\ar@{=>}[r]^-{\Gg}&(\widehat{\Gg}\circ f,\Ff^\Gg)\ar@{=>}[r]^-{g_1\otimes g_2}&(\widehat{\Gg'}\circ f',(\Ff')^{\Gg'})\ar@{=>}[r]^-{(\Gg')^{-1}}&(f',\Ff')}.\]
The $2$-cells on the two sides above are clearly well-defined. It remains to discuss the one in the middle.

We have $\left( p_{1},1\otimes 1\right) \circ \left( f,\Ff %
\right) =\left( p_{1}f,\left( p_{1}\otimes p_{1}\right)
(\Ff )\right) =\left( p_{1}f,\Ff %
_{1}\right) $ and $\left( p_{2},1\otimes 1\right) \circ
\left( f,\Ff \right) =\left( p_{2}f,%
\Ff _{2}\right)$, where $\Ff_{1}=(p_{1}\ot p_{1})(\Ff)$ and $\Ff_{2}=(p_{2}\ot p_{2})(\Ff)$ and, similarly with $(f',\Ff')$. Therefore, by definition of the $2$-cells $%
g_{1}:\left(
p_{1}f,\Ff _{1}\right) \Rightarrow \left(
p_{1}f^{\prime },\Ff _{1}^{\prime }\right) $ and $g_{2}:\left( p_{2}f,\Ff %
_{2}\right)\Rightarrow \left( p_{2}f^{\prime },\Ff %
_{2}^{\prime }\right) $, we have $\varepsilon_1(g_1)=1,\varepsilon_2(g_2)=1$ and%
\begin{align*}
\left( g_{1}\otimes g_{1}\right) \Ff _{1}  =\Ff _{1}^{\prime }\Delta _{H_{1}}\left(
g_{1}\right),&\qquad g_{1}
p_1f(-)=p_{1}f^{\prime }(-)g_{1}, \\
\left( g_{2}\otimes g_{2}\right) \Ff _{2} =\Ff _{2}^{\prime }\Delta _{H_{2}}\left(
g_{2}\right) ,&\qquad g_{2}
p_{2}f(-)=p_{2}f^{\prime }(-)g_{2}.
\end{align*}
Clearly $\varepsilon_{H_1\otimes H_2}(g_1\ot g_2)=\varepsilon_1(g_1)\varepsilon_2(g_2)= 1$. By Theorem  \ref{thm:twistot} we know that
\begin{align}
\label{eqcohom0}
\Ff^\Gg& =\left( 1\otimes \left(  \Gg ^{\mathrm{op}} \Hh 
^{-1}\right) ^{-1}\otimes 1\right) \big(\left( \id _{H_{1}}\otimes
\tau \otimes \id _{H_{2}}\right) \left( \Ff _{1}\otimes \Ff _{2}\right)\big),\\
\label{eqcohom}
(\Ff')^{\Gg'}& =\left( 1\otimes \left( \left( \Gg %
^{\prime }\right) ^{\mathrm{op}}\left( \Hh ^{\prime }\right)
^{-1}\right) ^{-1}\otimes 1\right) \big(\left( \id _{H_{1}}\otimes
\tau \otimes \id _{H_{2}}\right) \left( \Ff _{1}^{\prime
}\otimes \Ff _{2}^{\prime }\right)\big),
\end{align}
where $\Hh\coloneqq (p_{2}\ot p_{1})(\Ff)$ and $\Hh'\coloneqq (p_{2}\ot p_{1})(\Ff')$. Since $f,f^{\prime }$ are morphisms of triangular bialgebras, we have%
\begin{align*}
\left( f\otimes f\right) \left( \Ss \right)
&=\Ff  ^{\mathrm{op}}\big(\left( \id %
_{H_{1}}\otimes \tau \otimes \id _{H_{2}}\right) \left( \Rr %
_{1}\otimes \Rr _{2}\right)\big) \left( \Ff \right) ^{-1},\\
\left( f^{\prime }\otimes f^{\prime }\right) \left( \Ss \right)
&=\left( \Ff ^{\prime }\right) ^{\mathrm{op}}\big(\left( \id %
_{H_{1}}\otimes \tau \otimes \id _{H_{2}}\right) \left( \Rr _{1}\otimes \Rr _{2}\right)\big) \left( \Ff ^{\prime }\right) ^{-1}.
\end{align*}%
If we apply $p_{2}\otimes p_{1}$ to the equalities above, we get%
\begin{align}
\label{eq:p2f'0}
\left( p_{2}f\otimes p_{1}f\right) \left( \Ss \right) &=\left( \left( p_{1}\otimes p_{2}\right) \left( \Ff\right) \right) ^{\mathrm{op}}\left( \left( p_{2}\otimes
p_{1}\right) \left( \Ff \right) \right) ^{-1}=
\Gg  ^{\mathrm{op}} \Hh  ^{-1}\\
\label{eq:p2f'}
\left( p_{2}f^{\prime }\otimes p_{1}f^{\prime }\right) \left( \Ss \right) &=\left( \left( p_{1}\otimes p_{2}\right) \left( \Ff^{\prime }\right) \right) ^{\mathrm{op}}\left( \left( p_{2}\otimes
p_{1}\right) \left( \Ff ^{\prime }\right) \right) ^{-1}=\left(
\Gg ^{\prime }\right) ^{\mathrm{op}}\left( \Hh ^{\prime
}\right) ^{-1}
\end{align}%
so that we obtain
\begin{align*}
(\Ff')^{\Gg'}&\Delta _{H_{1}\otimes H_{2}}(g_1\ot g_2)=\\
&\overset{\mathclap{\eqref{eqcohom},\eqref{eq:p2f'}}}{=}\hspace{0.3cm}\left( 1_{H_{1}}\otimes (p_{2}f'\ot p_{1}f')(\Ss^{-1}) \otimes 1_{H_{2}}\right) \big(\left(
\id _{H_{1}}\otimes \tau \otimes \id _{H_{2}}\right)\left( \Ff'_{1}\otimes \Ff'_{2}\right)\big)\Delta_{H_{1}\ot H_{2}}(g_{1}\ot g_{2}) \\
&=\left( 1_{H_{1}}\otimes (p_{2}f'\ot p_{1}f')(\Ss^{-1}) \otimes 1_{H_{2}}\right) \left(
\id _{H_{1}}\otimes \tau \otimes \id _{H_{2}}\right)\left( \Ff'_{1}\Delta_{H_{1}}(g_{1})\otimes \Ff'_{2}\Delta_{H_{2}}(g_{2})\right)  \\
&=\left( 1_{H_{1}}\otimes (p_{2}f'\ot p_{1}f')(\Ss^{-1}) \otimes 1_{H_{2}}\right) \left(
\id _{H_{1}}\otimes \tau \otimes \id _{H_{2}}\right) \left(
(g_{1}\ot g_{1})\Ff _{1}\otimes (g_{2}\ot g_{2})\Ff _{2}\right)  \\
&= \left( 1_{H_{1}}\otimes  \left(
p_{2}f'\otimes p_{1}f'\right) \left( \Ss ^{-1}\right)  \otimes 1_{H_{2}}\right) \left(
g_{1}\otimes g_{2}\ot g_{1}\otimes g_{2}\right) \left( \id %
_{H_{1}}\otimes \tau \otimes \id _{H_{2}}\right) \left( \Ff %
_{1}\otimes \Ff _{2}\right) \\
&=\left( 1_{H_{1}}\otimes \left(
g_{2}p_{2}f\otimes g_{1}p_{1}f\right) \left( \Ss ^{-1}\right)  \otimes 1_{H_{2}}\right) \left(
g_{1}\otimes 1\ot1\otimes g_{2}\right) \left( \id %
_{H_{1}}\otimes \tau \otimes \id _{H_{2}}\right) \left( \Ff %
_{1}\otimes \Ff _{2}\right)  \\
&=\left( g_{1}\otimes g_{2} \otimes g_{1}\otimes
g_{2}\right)  \left( 1_{H_{1}}\otimes \left( p_{2}f\otimes
p_{1}f\right) \left( \Ss ^{-1}\right) \otimes 1_{H_{2}}\right) \left(
\id _{H_{1}}\otimes \tau \otimes \id _{H_{2}}\right) \left(
\Ff _{1}\otimes \Ff _{2}\right) \\
&\overset{\mathclap{\eqref{eqcohom0},\eqref{eq:p2f'0}}}{=}\hspace{0.3cm}\left( \left( g_{1}\otimes g_{2}\right) \otimes \left( g_{1}\otimes
g_{2}\right) \right) \Ff^\Gg.
\end{align*}
Moreover%
\begin{align*}
\widehat{\Gg'}f^{\prime }(h)(g_1\ot g_2) &=\Gg'f'(h)(\Gg')^{-1}(g_1\ot g_2)\\
&=\left( p_{1}\otimes p_{2}\right)(\Ff')\big(\left( p_{1}\otimes p_{2}\right) \Delta _{H_{1}\otimes
H_{2}}f^{\prime }(h)\big)\big((p_{1}\ot p_{2})((\Ff')^{-1})\big)(g_{1}\ot g_{2})\\
&=\big((p_{1}\ot p_{2})(\Ff'\Delta_{H_{1}\ot H_{2}}f'(h)(\Ff')^{-1})\big)(g_{1}\ot g_{2})\\
&=\big((p_{1}\ot p_{2})((f'\ot f')\Delta(h))\big)(g_{1}\ot g_{2})\\
&=\big((p_{1}f'\ot p_{2}f')\Delta(h)\big)(g_{1}\ot g_{2})\\
&=(g_{1}\ot g_{2})\big((p_{1}f\ot p_{2}f)\Delta(h)\big)\\
&=(g_{1}\ot g_{2})\big((p_{1}\ot p_{2})((f\ot f)\Delta(h))\big)\\
&=(g_{1}\ot g_{2})\big((p_{1}\ot p_{2})(\Ff\Delta_{H_1\ot H_2} f(h)\Ff^{-1})\big)
\\
&=(g_{1}\ot g_{2})\Gg f(h)\Gg^{-1}=(g_{1}\ot g_{2})\widehat{\Gg} f(h).
\end{align*}%
Thus, we get the desired $2$-cell $g_1\ot g_2:\left( \widehat{\Gg}\circ f,\Ff ^\Gg\right) \Rightarrow \left(
 \widehat{\Gg'}\circ f^{\prime },(\Ff ^{\prime })^{\Gg'}\right) .$ Finally, we have
\begin{equation*}
\left( p_{1},1\otimes 1\right) g=p_{1}\left( g\right) =p_{1}\left( \left(
\Gg ^{\prime }\right) ^{-1}\left( g_{1}\otimes g_{2}\right)\Gg \right)
=p_{1}\left( \Gg ^{\prime }\right) ^{-1}g_{1}\varepsilon \left(
g_{2}\right) p_1(\Gg)=g_{1}
\end{equation*}%
as $p_{1}\left( \Gg \right) =p_{1}\left( p_{1}\otimes
p_{2}\right) \left( \Ff \right) =\left( \id \otimes
\varepsilon \otimes \varepsilon \otimes \varepsilon \right) \left( \Ff\right) =1$ and similarly $p_{1}\left( \Gg ^{\prime }\right) =1.$ Analogously,
\begin{equation*}
\left( p_{2},1\otimes 1\right) g=p_{2}\left( g\right) =p_{2}\left( \left(
\Gg ^{\prime }\right) ^{-1}\left( g_{1}\otimes g_{2}\right)\Gg \right)
=p_{2}\left( \Gg ^{\prime }\right) ^{-1}\varepsilon \left(
g_{1}\right) g_{2}p_2(\Gg)=g_{2}
\end{equation*}%
as  $p_{2}\left( \Gg \right) =p_{2}\left( p_{1}\otimes
p_{2}\right) \left( \Ff \right) =\left( \varepsilon \otimes
\varepsilon \otimes \varepsilon \otimes \id \right) \left( \Ff %
\right) =1$ and $p_{2}\left( \Gg ^{\prime }\right) =1.$    
\end{proof}

Putting together Proposition \ref{pro:Tw2cart1} and Proposition \ref{pro:Tw2cart2}, we obtain the following result.

\begin{theorem}
\label{thm:Tw2cart}
The $2$-category $\TwTrBialg$ has binary products that, for objects $(H_{1},\Rr_{1})$ and $(H_{2},\Rr_{2})$ in $\TwTrBialg$, is given by
\begin{equation}
\label{def:binaryproduct}
\big((H_{1},\Rr_{1})\ot(H_{2},\Rr_{2}),(\id \otimes\varepsilon,1\otimes1),(\varepsilon\otimes\id ,1\otimes1)\big).
\end{equation}
\end{theorem}

\begin{remark}
\label{rmk:tensorproductmorphism}
    Given morphisms $(f_{1},\Ff_{1})
    $ and $(f_{2},\Ff_{2})
    $ in $\TwTrBialg$, as in the diagram \begin{equation*}
\xymatrixcolsep{2cm}\xymatrixrowsep{1.3cm}\xymatrix{(H,\Rr)\ar[d]_{(f_1,\Ff_1)}&(H,\Rr)\ot(H',\Rr')\ar[r]^{(\varepsilon\otimes\id,1\otimes 1)}\ar[l]_{(\id\otimes\varepsilon,1\otimes
1)}
\ar@{.>}[d]|{(f_1,\Ff_1)\ot(f_2,\Ff_2)}&(H',\Rr')\ar[d]^{(f_2,\Ff_2)}\\ (H_1,\Rr_1)&
(H_1,\Rr_1)\otimes (H_2,\Rr_2)\ar[l]_{(\id\otimes\varepsilon,1\otimes
1)}\ar[r]^{(\varepsilon\otimes\id,1\otimes 1)}& (H_2,\Rr_2)}
\end{equation*}
we calculate the compositions
\begin{align*}
(f_{1},\Ff_{1})(\id \ot\varepsilon,1\ot1)&=(f_{1}(\id \ot\varepsilon),\Ff_{1})
,\qquad (f_{2},\Ff_{2})(\varepsilon\ot\id ,1\ot1)=(f_{2}(\varepsilon\ot\id ),\Ff_{2})
.    \end{align*} 
As usual, we refer to their diagonal morphism in $\TwTrBialg$ as \emph{the product of the two morphisms} $(f_1,\Ff_1)$ and $(f_2,\Ff_2)$, denoted by $(f_1,\Ff_1)\ot(f_2,\Ff_2)$. 
By Proposition \ref{pro:Tw2cart1}, it is the pair  
\((f,\Ff)\), where 
\begin{align*}
f&=(f_{1}(\id \ot\varepsilon)\ot f_{2}(\varepsilon\ot\id ))\Delta_{H\ot H'}=f_{1}\ot f_{2} \\
\Ff&=\big(1\ot(f_{2}(\varepsilon\ot\id )\ot f_{1}(\id \ot\varepsilon))(\id \ot\tau\ot\id )(\Rr^{-1}\ot\Rr'^{-1})\ot1\big)(\id \ot\tau\ot\id )(\Ff_{1}\ot\Ff_{2})\\&=(\id \ot\tau\ot\id )(\Ff_{1}\ot\Ff_{2}).
\end{align*}
Summing up, we have the formula 
\begin{equation}
\label{eq:tns1cell}
(f_1,\Ff_1)\ot(f_2,\Ff_2)=(f_{1}\ot f_{2},(\id \ot\tau\ot\id )(\Ff_{1}\ot\Ff_{2})).  \end{equation} 
\end{remark}

We now look for a terminal object in $\TwTrBialg$. We first recall the definition in the $2$-categorical context, see again \cite[\S 2.2]{CKWW2}. 

\begin{definition}
A \emph{terminal object} in a $2$-category $\Cc$ is a 0-cell $\mathsf{1}$ such that the functor 
\[\Cc(X,\mathsf{1})\to* \] into the terminal category is an equivalence of categories. 
Explicitly, this means that for any 0-cell $X$ there is a 1-cell $X\rightarrow \mathsf{1}$ and for any two 1-cells $f,g:X\rightarrow \mathsf{1}$ there is a unique $2$-cell $f\Rightarrow g$. Note that this $2$-cell is necessarily  an isomorphism. 
\begin{invisible}
There are unique $\alpha:f\Rightarrow g$ and $\beta:g\Rightarrow f$ and also their compositions are unique and have than to be the identities. 
\end{invisible}
\end{definition}

Note that in $\TwTrBialg$ the functor above is even bijective on objects, whence a category isomorphism, see \cite[Definition 1.5.1]{BorI94}.

\begin{lemma}
\label{lem:termobj}
The triangular bialgebra $(\Bbbk,1\otimes 1)$ is a terminal object in the $2$-category $\TwTrBialg$.
\end{lemma}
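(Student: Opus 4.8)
The plan is to verify directly the $2$-categorical definition of terminal object, showing that for every $0$-cell the hom-category into $(\Bbbk,1\otimes 1)$ is isomorphic to the terminal category; as anticipated in the remark preceding the statement, this is even a bijection on objects. First I would pin down the $1$-cells. A $1$-cell $(f,\Ff)\colon (H,\Rr)\to(\Bbbk,1\otimes 1)$ consists of a twist $\Ff\in\Bbbk\otimes\Bbbk$ together with a bialgebra map $f\colon H\to\Bbbk_\Ff$ satisfying \eqref{eq:deformedR}. Writing $\Ff=\lambda(1\otimes1)$ and recalling that the counit of $\Bbbk$ is the identity, the normalization \eqref{norm} gives $\lambda=(\mathrm{Id}\otimes\varepsilon)(\Ff)=1$, so necessarily $\Ff=1\otimes1$ and $\Bbbk_\Ff=\Bbbk$.

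Next I would identify $f$. Since a coalgebra map $f\colon H\to\Bbbk$ must satisfy $\varepsilon_\Bbbk f=\varepsilon_H$ and $\varepsilon_\Bbbk=\mathrm{Id}_\Bbbk$, we are forced to have $f=\varepsilon_H$; conversely the counit is a bialgebra map. It remains to check the triangular compatibility \eqref{eq:deformedR}, which here reads $(\varepsilon_H\otimes\varepsilon_H)(\Rr)=1\otimes1$: this holds because $(\mathrm{Id}_H\otimes\varepsilon_H)(\Rr)=1_H$, so that $(\varepsilon_H\otimes\varepsilon_H)(\Rr)=\varepsilon_H(1_H)=1$. Hence $(\varepsilon_H,1\otimes1)$ is the unique $1$-cell $(H,\Rr)\to(\Bbbk,1\otimes1)$.

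For the $2$-cells I would use that, since the only $1$-cell is $(\varepsilon_H,1\otimes 1)$, a $2$-cell is a gauge transformation $a\colon(\varepsilon_H,1\otimes1)\Rightarrow(\varepsilon_H,1\otimes1)$, namely an element $a\in\Bbbk$ with $\varepsilon_\Bbbk(a)=1$ satisfying \eqref{def:gauge1} and \eqref{def:gauge2}. Condition \eqref{def:gauge2} is vacuous by commutativity of $\Bbbk$, while \eqref{def:gauge1}, using $\Delta_\Bbbk(a)=a(1\otimes1)$, reduces under the identification $\Bbbk\otimes\Bbbk\cong\Bbbk$ to $a^2=a$. Together with $a\neq 0$ (equivalently $\varepsilon_\Bbbk(a)=1$, see \cref{rmk:gaugeps}) this forces $a=1$, the identity $2$-cell. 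Thus there is a unique $2$-cell, and the hom-category $\TwTrBialg\big((H,\Rr),(\Bbbk,1\otimes1)\big)$ is the terminal one.

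The computations are all routine; the only point requiring a little care is confirming uniqueness of the twist at the level of $1$-cells, where one genuinely uses that the counit of the base field is the identity to rule out any rescaling of $1\otimes1$. Everything else follows from the terminality of $\Bbbk$ in the category of bialgebras and the constraint $\varepsilon_\Bbbk(a)=1$ built into the notion of gauge transformation.
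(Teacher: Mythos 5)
Your proof is correct and follows essentially the same route as the paper's: uniqueness of the twist $1\otimes 1$ on $\Bbbk$, uniqueness of $\varepsilon_H$ as the bialgebra map, and the idempotency argument $a^2=a$, $a\neq 0$ forcing $a=1$ for the $2$-cells. You additionally spell out the verification of the triangular compatibility $(\varepsilon_H\otimes\varepsilon_H)(\Rr)=1$, which the paper leaves implicit; this is a welcome but minor elaboration, not a different approach.
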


\begin{proof}
Let $(H,\Rr )$ be a triangular bialgebra. Since $\Ff =1\otimes1$ is the unique twist (and triangular structure) on $\Bbbk$ and $\varepsilon:H\to\Bbbk$ is the unique morphism in $\Bialg$ from $H$ to $\Bbbk$, we get that $(\varepsilon,1\ot1):(H,\Rr)\to(\Bbbk,1\ot1)$ is the unique morphism in $\TwTrBialg$ from $(H,\Rr)$ to $(\Bbbk,1\ot1)$. Given a $2$%
-cell $k:\left( \varepsilon ,1\otimes 1\right) \Rightarrow \left(
\varepsilon ,1\otimes 1\right) ,$ we have that $(k\otimes k)(1\ot 1)=(1\otimes 1)\Delta_{\Bbbk} (k)$, so that $k^2=k$. Since $k\neq 0$  (cf.\ Remark \ref{rmk:gaugeps}), we get $k=1$, so that the corresponding $2$-cell is necessarily the identity $2$-cell. Therefore, $(\Bbbk,1\ot1)$ is a terminal object in the 2-category $\TwTrBialg$.
\end{proof}

In view of \cite[Proposition 2.4]{CKWW2}, by Theorem \ref{thm:Tw2cart} and Lemma \ref{lem:termobj}, we get the following result.

\begin{corollary}
The $2$-category $\TwTrBialg$ has finite products.
\end{corollary}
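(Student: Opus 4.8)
The plan is to reduce the statement to the two facts already established and to invoke the general $2$-categorical principle that a terminal object together with binary products suffices to produce all finite products. This is precisely the content of \cite[Proposition 2.4]{CKWW2}, so the proof amounts to checking that its hypotheses are met in $\TwTrBialg$ and recalling how the higher products are assembled from the binary ones.

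First I would record the two inputs. By \cref{lem:termobj}, the triangular bialgebra $(\Bbbk,1\otimes 1)$ is a terminal object in $\TwTrBialg$, and by \cref{thm:Tw2cart}, $\TwTrBialg$ admits the binary product of any two objects $(H_1,\Rr_1)$ and $(H_2,\Rr_2)$, realized by $(H_1,\Rr_1)\otimes(H_2,\Rr_2)$ with the stated projections and diagonal. In the language of finite products, the terminal object is the nullary product, while a single object is its own unary product.

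Next, for $n\geq 2$ objects $(H_1,\Rr_1),\dots,(H_n,\Rr_n)$, I would build the $n$-ary product by iterating the binary one, say as $(H_1,\Rr_1)\otimes\big((H_2,\Rr_2)\otimes\cdots\otimes(H_n,\Rr_n)\big)$, taking as projections the appropriate composites of the binary projections. Its universal property is then verified by induction on $n$: the surjectivity on objects and the bijective correspondence of $2$-cells for the comparison functor
\[
\TwTrBialg\big(X,\textstyle\bigotimes_{i}(H_i,\Rr_i)\big)\longrightarrow \prod_{i}\TwTrBialg\big(X,(H_i,\Rr_i)\big)
\]
follow by composing the equivalences supplied by \cref{thm:Tw2cart} at each stage, exactly as in the classical passage from binary to finite products, now carried out at the level of $2$-cells as well as $1$-cells.

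The only point requiring attention, and the part I expect to be the main (though entirely routine) obstacle, is confirming that this inductive assembly is legitimate $2$-categorically, i.e. that the equivalences of hom-categories compose coherently so that the resulting comparison functor into the finite product of hom-categories is again an equivalence. Since \cref{thm:Tw2cart} delivers the binary comparison functor not merely as an equivalence but with surjectivity on objects together with a strict bijection on $2$-cells (through the explicit formula for the mediating gauge transformation $g$), this composition presents no difficulty, and the conclusion follows directly from \cite[Proposition 2.4]{CKWW2}.
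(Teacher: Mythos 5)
Your proposal is correct and follows exactly the paper's route: the paper likewise deduces the corollary from \cref{thm:Tw2cart} and \cref{lem:termobj} by invoking \cite[Proposition 2.4]{CKWW2}, with no further argument given. The additional inductive details you sketch are a harmless elaboration of what that citation already packages.
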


Moreover, in view of \cite[Theorem 2.15]{CKWW2}, we also obtain the following result. Here, by a symmetric monoidal $2$-category we mean $2$-category that is symmetric monoidal as a bicategory.

\begin{corollary}
$\TwTrBialg$ is a symmetric monoidal 2-category with the binary product, defined on objects as in \eqref{def:binaryproduct} and on 1-cells as in \eqref{eq:tns1cell}, as its tensor product and the terminal object $(\Bbbk,1\ot1)$ as its unit object.
\end{corollary}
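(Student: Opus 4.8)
The plan is to obtain this as a formal consequence of the general theory of cartesian bicategories rather than by any direct manipulation of twists. The relevant input is \cite[Theorem 2.15]{CKWW2}, which guarantees that any $2$-category (viewed as a bicategory) admitting finite products carries a canonical symmetric monoidal structure whose tensor product is the binary product and whose unit is the terminal object. We have already verified in \cref{thm:Tw2cart} that $\TwTrBialg$ has binary products and in \cref{lem:termobj} that $(\Bbbk,1\ot1)$ is terminal; combined, the preceding corollary supplies all finite products. Thus the cited theorem applies to $\TwTrBialg$ and produces the asserted symmetric monoidal $2$-category structure, with no further hypotheses to check.

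What remains is purely an identification of the abstract monoidal data with the explicit formulas in the statement. First I would record that, by construction in \cite{CKWW2}, the tensor product on $0$-cells is the binary product $(H_1,\Rr_1)\ot(H_2,\Rr_2)$ of \eqref{def:binaryproduct}, and the monoidal unit is the terminal object $(\Bbbk,1\ot1)$. Next I would observe that in any cartesian bicategory the induced tensor of two $1$-cells $(f_1,\Ff_1)\colon(H,\Rr)\to(H_1,\Rr_1)$ and $(f_2,\Ff_2)\colon(H',\Rr')\to(H_2,\Rr_2)$ is, up to canonical isomorphism, the product $1$-cell $\langle(f_1,\Ff_1)\circ(\id\ot\varepsilon,1\ot1),\,(f_2,\Ff_2)\circ(\varepsilon\ot\id,1\ot1)\rangle$ furnished by the universal property of \cref{thm:Tw2cart}. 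This is precisely the $1$-cell already computed in \cref{rmk:tensorproductmorphism}, where it was shown to equal $(f_1\ot f_2,\,(\id\ot\tau\ot\id)(\Ff_1\ot\Ff_2))$, i.e. the right-hand side of \eqref{eq:tns1cell}. Hence the concrete tensor on $1$-cells agrees with the abstract cartesian one.

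The main obstacle is not the invocation of \cite[Theorem 2.15]{CKWW2}, which is automatic once finite products are available, but the bookkeeping needed to be certain that the concrete formulas \eqref{def:binaryproduct} and \eqref{eq:tns1cell} genuinely coincide with the coherent monoidal data extracted from the universal properties, and that the associativity, unit, and symmetry constraints are the expected ones on $\TwTrBialg$. Both points are settled by \cref{rmk:tensorproductmorphism} together with the naturality built into the cited theorem: the compatibility of gauge transformations with the triangular structures was already established, so no new computation with twists is required, and the constraints are inherited from those of the cartesian structure on the underlying $2$-category.
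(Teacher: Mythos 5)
Your proposal is correct and follows essentially the same route as the paper: the corollary is obtained by applying \cite[Theorem 2.15]{CKWW2} once finite products are available from \cref{thm:Tw2cart}, \cref{lem:termobj}, and the preceding corollary, with the identification of the tensor on $1$-cells coming from \cref{rmk:tensorproductmorphism}. Your additional bookkeeping confirming that the abstract cartesian data coincide with \eqref{def:binaryproduct} and \eqref{eq:tns1cell} is exactly what the paper leaves implicit.
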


We end this subsection by proving a result that will be useful in the following.

\begin{lemma}
\label{lem:isoTwTr}
A $1$-cell $\left( f,\Ff \right) :\left( H,\Rr \right) \rightarrow \left(
H^{\prime },\Rr ^{\prime }\right) $  in $\TwTrBialg$ is invertible if, and
only if, $f:\left( H,\Rr \right) \rightarrow \left( H_{\Ff %
}^{\prime },\Rr _{\Ff }^{\prime }\right) $ is an isomorphism in $\TrBialg$.

Thus, $(H,\Rr)$ and $(H',\Rr')$ are isomorphic in $\TwTrBialg$ if, and only if, they are twist equivalent up to isomorphism.
\end{lemma}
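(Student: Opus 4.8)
The plan is to prove both implications of the equivalence directly and then read off the final ``Thus'' as an immediate consequence.

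For the forward direction, suppose $(f,\Ff)$ is invertible in $\TwTrBialg$, say with inverse $(g,\Gg):(H',\Rr')\to(H,\Rr)$. Unwinding the composition rule \eqref{eq:compo} and the shape of the identity $1$-cells $(\id,1\otimes 1)$, the equalities $(g,\Gg)\circ(f,\Ff)=\id_{(H,\Rr)}$ and $(f,\Ff)\circ(g,\Gg)=\id_{(H',\Rr')}$ force $gf=\id_H$ and $fg=\id_{H'}$ (together with $(g\otimes g)(\Ff)\Gg=1\otimes 1$ and $(f\otimes f)(\Gg)\Ff=1\otimes 1$, which I will reuse below). In particular $f$ is a bijective algebra map. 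Since $(f,\Ff)$ is a $1$-cell, $f\colon(H,\Rr)\to(H'_\Ff,\Rr'_\Ff)$ is already a morphism in $\TrBialg$, and a bijective morphism of triangular bialgebras is an isomorphism there: the inverse of a bijective bialgebra map is a bialgebra map, and applying $f^{-1}\otimes f^{-1}$ to $(f\otimes f)(\Rr)=\Rr'_\Ff$ (equation \eqref{eq:deformedR}) gives $(f^{-1}\otimes f^{-1})(\Rr'_\Ff)=\Rr$. Hence $f$ is an isomorphism in $\TrBialg$.

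For the converse, assume $f\colon(H,\Rr)\to(H'_\Ff,\Rr'_\Ff)$ is an isomorphism in $\TrBialg$, set $g:=f^{-1}$ and $\Gg:=(g\otimes g)(\Ff^{-1})$, and build the candidate inverse $(g,\Gg)$. I would first recall the standard fact that $\Ff^{-1}$ is a twist on $H'_\Ff$ with $(H'_\Ff)_{\Ff^{-1}}=H'$; since $g\colon(H'_\Ff,\Rr'_\Ff)\to(H,\Rr)$ is a triangular bialgebra isomorphism, pulling the twist $\Ff^{-1}$ back along the bialgebra isomorphism $g$ shows $\Gg$ is a twist on $H$. The identity that makes everything close up is $\Gg^{-1}=(g\otimes g)(\Ff)$, valid because $g\otimes g$ is an algebra map. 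Using that $g$ is a coalgebra map $H'_\Ff\to H$, i.e. $\Delta_H(g(y))=(g\otimes g)\big(\Ff\Delta'(y)\Ff^{-1}\big)$, one computes $\Delta_\Gg(g(y))=\Gg\,(g\otimes g)(\Ff)\,(g\otimes g)\Delta'(y)\,(g\otimes g)(\Ff^{-1})\,\Gg^{-1}=(g\otimes g)\Delta'(y)$, so $g\colon H'\to H_\Gg$ is a bialgebra map. Similarly, writing $\Rr'=(\Ff^{\op})^{-1}\Rr'_\Ff\,\Ff$, applying $g\otimes g$, and using $(g\otimes g)(\Rr'_\Ff)=\Rr$ together with naturality of the flip (so that $(g\otimes g)(T^{\op})=\big((g\otimes g)(T)\big)^{\op}$) yields $(g\otimes g)(\Rr')=\Gg^{\op}\Rr\Gg^{-1}=\Rr_\Gg$. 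Thus $(g,\Gg)\colon(H',\Rr')\to(H,\Rr)$ is a $1$-cell of $\TwTrBialg$, and from $\Gg^{-1}=(g\otimes g)(\Ff)$ and $(f\otimes f)(\Gg)=(fg\otimes fg)(\Ff^{-1})=\Ff^{-1}$ the composition rule \eqref{eq:compo} gives $(g,\Gg)\circ(f,\Ff)=(\id_H,1\otimes 1)$ and $(f,\Ff)\circ(g,\Gg)=(\id_{H'},1\otimes 1)$, so $(f,\Ff)$ is invertible.

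The final assertion is then immediate: $(H,\Rr)$ and $(H',\Rr')$ are isomorphic in $\TwTrBialg$ exactly when there is an invertible $1$-cell between them, which by the equivalence just proved is the same as the existence of a twist $\Ff$ on $H'$ and an isomorphism $(H,\Rr)\cong(H'_\Ff,\Rr'_\Ff)$ in $\TrBialg$, i.e. $(H,\Rr)$ and $(H',\Rr')$ being twist equivalent up to isomorphism. I expect the main obstacle to be the bookkeeping in the converse: keeping straight which bialgebra ($H'$ versus $H'_\Ff$, $H$ versus $H_\Gg$) each map and each twist lives on, and tracking $\Ff$ against $\Ff^{-1}$ when transporting the twist and the triangular structure through $g=f^{-1}$. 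No single verification is deep, but the identities $(g\otimes g)(\Ff)=\Gg^{-1}$ and ``$\Ff^{-1}$ is a twist on $H'_\Ff$'' are exactly what make each step work, so I would isolate and state them before running the computations.
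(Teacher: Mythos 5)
Your proof is correct and follows essentially the same route as the paper: the same unwinding of the composition rule for the forward direction, and the same candidate inverse $(g,\Gg)$ with $g=f^{-1}$ and $\Gg=(f^{-1}\otimes f^{-1})(\Ff^{-1})$ for the converse, with identical verifications of the coalgebra-map and $\Rr$-matrix compatibilities and of the two compositions. The only (harmless) difference is that you justify ``$\Gg$ is a twist on $H$'' by transporting the twist $\Ff^{-1}$ on $H'_\Ff$ along the isomorphism $g$, whereas the paper checks the $2$-cocycle and normalization conditions by a direct computation.
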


\begin{proof}
Let $\left( f,\Ff \right) :\left( H,\Rr \right) \rightarrow
\left( H^{\prime },\Rr ^{\prime }\right) $ be an isomorphism in $\TwTrBialg$. Then
there is $\left( g,\Gg \right) :\left( H^{\prime },\Rr %
^{\prime }\right) \rightarrow \left( H,\Rr \right) $ in $\TwTrBialg$ such that $%
\left( f,\Ff \right) \circ \left( g,\Gg \right) =\left(
\id _{H^{\prime }},1_{H^{\prime }}\otimes 1_{H^{\prime }}\right) $
and $\left( g,\Gg \right) \circ \left( f,\Ff \right) =\left(
\id _{H},1_{H}\otimes 1_{H}\right) .$ In particular, $f g=\mathrm{%
Id}_{H^{\prime }}$ and $g f=\id _{H}$ so that $f:\left( H,%
\Rr \right) \rightarrow \left( H_{\Ff }^{\prime },\Rr _{%
\Ff }^{\prime }\right) $ is an isomorphism in $\TrBialg$.

Conversely, if $f:\left( H,\Rr \right) \rightarrow \left( H_{\Ff}^{\prime },\Rr _{\Ff }^{\prime }\right) $ is an isomorphism in $\TrBialg$, we can set $g:=f^{-1}$ and $\Gg :=\left(
f^{-1}\otimes f^{-1}\right) \left( \Ff ^{-1}\right) \in H\otimes H.$
We check that $\Gg $ is a twist on $H.$ Since $%
f:H\rightarrow H^{\prime }$ is an algebra map ($H^{\prime }$ and $H_{%
\Ff }^{\prime }$ have the same underlying algebra), we get
\begin{align*}
\left( f\otimes f\otimes f\right) \left( \left( \Gg \otimes
1\right) \left( \Delta \otimes \id \right) \left( \Gg \right)
\right)  
&=\left( \left( f\otimes f\right) \left( \Gg \right) \otimes
1\right) \left( \left( f\otimes f\right) \Delta \otimes f\right) \left(
\Gg \right)  \\
&=\left( \Ff ^{-1}\otimes 1\right) \left( \Delta _{\Ff %
}^{\prime }f\otimes f\right) \left( \Gg \right)  \\
&=\left( \Delta ^{\prime }f\otimes f\right) \left( \Gg \right)
\left( \Ff ^{-1}\otimes 1\right)  \\
&=\left( \Delta ^{\prime }\otimes \id \right) \left( \Ff %
^{-1}\right) \left( \Ff ^{-1}\otimes 1\right)  \\
&=\left( \id \otimes \Delta ^{\prime }\right) \left( \Ff %
^{-1}\right) \left( 1\otimes \Ff ^{-1}\right)  \\
&=\left( f\otimes \Delta ^{\prime }f\right) \left( \Gg \right)
\left( 1\otimes \Ff ^{-1}\right)  \\
&=\left( 1\otimes \Ff ^{-1}\right) \left( f\otimes \Delta _{\Ff}^{\prime }f\right) \left( \Gg \right)  \\
&=\left( 1\otimes \left( f\otimes f\right) \left( \Gg \right)
\right) \left( f\otimes \left( f\otimes f\right) \Delta \right) \left(
\Gg \right)  \\
&=\left( f\otimes f\otimes f\right) \left( \left( 1\otimes \Gg %
\right) \left( \id \otimes \Delta \right) \left( \Gg \right)
\right)
\end{align*}%
so that $\left( \Gg \otimes 1\right) \left( \Delta \otimes \id %
\right) \left( \Gg \right) =\left( 1\otimes \Gg \right) \left(
\id \otimes \Delta \right) \left( \Gg \right) .$ Moreover%
\begin{equation*}
\left( \varepsilon \otimes \id \right) \left( \Gg \right)
=\left( \varepsilon \otimes \id \right) \left( f^{-1}\otimes
f^{-1}\right) \left( \Ff ^{-1}\right) =\left( \varepsilon \otimes
f^{-1}\right) \left( \Ff ^{-1}\right) =1
\end{equation*}%
and similarly $\left( \id \otimes \varepsilon \right) \left( \Gg\right) =1.$ Now, we verify that $g:H^{\prime }\rightarrow H_{\Gg }$
is a coalgebra map, whence a bialgebra map. We compute%
\begin{eqnarray*}
\left( f\otimes f\right) \Delta _{\Gg }\left( h\right)  &=&\left(
f\otimes f\right) \left( \Gg \Delta \left( h\right) \Gg %
^{-1}\right)  \\
&=&\left( f\otimes f\right) \left( \Gg \right) \left( f\otimes
f\right) \Delta \left( h\right) \left( f\otimes f\right) \left( \Gg %
^{-1}\right)  \\
&=&\Ff ^{-1}\left( f\otimes f\right) \Delta \left( h\right) \Ff=\Delta ^{\prime }f\left( h\right) 
\end{eqnarray*}%
so that $\left( f\otimes f\right) \Delta _{\Gg }=\Delta ^{\prime }f$
and hence $\Delta _{\Gg }g=\left( g\otimes g\right) \Delta ^{\prime }.
$ Moreover
\begin{eqnarray*}
\left( g\otimes g\right) \left( \Rr ^{\prime }\right)  &=&\left(
g\otimes g\right) \left( \left( \Ff ^{\mathrm{op}}\right) ^{-1}%
\Rr _{\Ff }^{\prime }\Ff \right)  \\
&=&\left( \left( g\otimes g\right) \left( \Ff ^{-1}\right) \right) ^{%
\mathrm{op}}\left( g\otimes g\right) \left( \Rr _{\Ff %
}^{\prime }\right) \left( g\otimes g\right) \left( \Ff \right)  \\
&=&\Gg ^{\mathrm{op}}\left( g\otimes g\right) \left( f\otimes
f\right) \left( \Rr \right) \Gg ^{-1}=\Gg ^{\mathrm{op}}%
\mathcal{RG}^{-1}=\Rr _{\Gg }.
\end{eqnarray*}%
Thus, $g:\left( H^{\prime },\Rr ^{\prime }\right) \rightarrow \left(
H_{\Gg },\Rr _{\Gg }\right) $ is a morphism in $\TrBialg$, hence $\left( g,\Gg \right)
:\left( H^{\prime },\Rr ^{\prime }\right) \rightarrow \left( H,%
\Rr \right)$ is a morphism in $\TwTrBialg$. By definition of composition, we have
\begin{eqnarray*}
\left( g,\Gg \right) \circ \left( f,\Ff \right)  &=&\left(
gf,\left( g\otimes g\right) \left( \Ff \right) \Gg \right)
=\left( \id _{H},1_{H}\otimes 1_{H}\right) , \\
\left( f,\Ff \right) \circ \left( g,\Gg \right)  &=&\left(
fg,\left( f\otimes f\right) \left( \Gg \right) \Ff \right)
=\left( \id _{H^{\prime }},1_{H^{\prime }}\otimes 1_{H^{\prime
}}\right)
\end{eqnarray*}%
and hence $\left( f,\Ff \right) $ is an isomorphism in $\TwTrBialg$.
\end{proof}

\subsection{Comparison with cocommutative bialgebras}
Here we are going to see how cocommutative bialgebras sit inside twisted triangular bialgebras.
Given a cocommutative bialgebra $H$, we know that $(H,1\otimes1)$ is an object in $\TwTrBialg$. If $H'$ is another cocommutative bialgebra, then a twisted morphisms of triangular bialgebras $(f,1\ot 1):(H,1\otimes1)\to (H',1\otimes1)$ is just a bialgebra map $f:H\to H'.$ A gauge transformation $a:(f,1\ot 1)\Rightarrow (f',1\ot 1)$ is just a grouplike element $a\in G(H')$ such that \eqref{def:gauge2} holds true.

In light of the above discussion, we denote by $\mathsf{Bialg}^{2}_{\mathsf{cc}}$ the $2$-category where: 
\begin{itemize}
    \item[i)] $0$-cells are cocommutative bialgebras,
    \item[ii)] $1$-cells are bialgebra maps $f:H\to H'$, 
    \item[iii)] $2$-cells $a:f\Rightarrow f'$ are grouplike elements $a\in H'$ such that \eqref{def:gauge2} holds true. 
\end{itemize}
 The vertical and horizontal compositions are defined as follows:
 \[\xymatrix@C=2cm{
H\ruppertwocell^{f}{b}
\rlowertwocell_{f''}{a}
\ar[r]_(.35){f'} & H'}
 =
\xymatrix{H\rtwocell^{f}_{f''}{ab}&H'},\qquad
\xymatrix{H\rtwocell^{f}_{f'}{b} & H' \rtwocell^{g}_{g'}{a}& H''}
=  \xymatrix@C=1cm{H \rrtwocell^{gf}_{g'f'}{\mathrlap{ag(b)}}&  & H''}.
\] 
\begin{invisible}
The vertical composition of $2$-cells is clearly associative and unital. \\
If $a:\alpha\Rightarrow \alpha'$, $b:\beta\Rightarrow \beta'$ and $c:\gamma\Rightarrow \gamma'$, then 
\[(a\hc b)\hc c=(a\alpha(b))\hc c=a\alpha(b)\alpha\beta(c)
=a\alpha(b\beta(c))
=a\alpha(b\hc c)=a\hc(b\hc c).\] 
Moreover $1\hc a=a\alpha(1)=a$ and $a\hc 1=1\id(a)=a$ so that the horizontal composition is associative and unital. Note that $\id_g\hc \id f=1\hc 1=1=\id_{gf}.$ Moreover, for $2$-cells as in the 
\[
\xymatrix@C=2cm{
H\ruppertwocell^{g}{b'}
\rlowertwocell_{g''}{a'}
\ar[r]_(.35){g'} & H'\ruppertwocell^{f}{b}
\rlowertwocell_{f''}{a}
\ar[r]_(.35){f'}&H''}
\]
we have the interchange law
$(a\vc b)\hc (a'\vc b')
=(ab)\hc (a'b')
=abf(a'b')
=abf(a')f(b')
=af'(a')bf(b')
=(af'(a'))\vc (bf(b'))
=(a\hc a')\vc (b\hc b')$.
Thus, in view of \cite[Proposition 2.3.4]{Johnson-Yau-book}, $\Bialgcc$ is a $2$-category.
\end{invisible}

Since, the assignments 
\[U:\mathsf{Bialg}^{2}_{\mathsf{cc}}\to \TwTrBialg,\; H\mapsto (H,1\otimes 1),\; f\mapsto (f,1\ot 1),\; a\mapsto a,\]
preserve the identity $1$-cells, the identity $2$-cells, the composition of $2$-cells, and horizontal compositions of $1$-cells and $2$-cells, then $U$ defines a $2$-functor, see also \cite[Explanation 4.1.9]{Johnson-Yau-book}. Clearly, this $2$-functor is locally fully faithful.\medskip

In some $2$-categories the product of two objects can exist without being their product in the $2$-categorical sense, see \cite[exercise 7.10.4]{BorI94}. Still, for $\mathsf{Bialg}^{2}_{\mathsf{cc}}$ we have the following result.

\begin{lemma}
The product of two objects in the $1$-category $\Bialgcc$ is also their product in the $2$-category $\mathsf{Bialg}^{2}_{\mathsf{cc}}$. The same goes for the terminal object.
\end{lemma}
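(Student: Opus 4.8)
The plan is to verify directly that the tensor product $H_1\otimes H_2$, equipped with the projections $p_1=\mathrm{Id}\otimes\varepsilon$ and $p_2=\varepsilon\otimes\mathrm{Id}$, satisfies the two clauses in the definition of a binary product in a $2$-category, and likewise that $\Bbbk$ satisfies the $2$-categorical terminal property. Clause 1) is immediate: given bialgebra maps $f_1\colon X\to H_1$ and $f_2\colon X\to H_2$, the $1$-categorical universal property already produces $h\coloneqq\langle f_1,f_2\rangle=(f_1\otimes f_2)\Delta_X$ with $p_1h=f_1$ and $p_2h=f_2$ \emph{on the nose}, so the comparison functor is even surjective on objects. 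All the content is therefore in clause 2), i.e.\ in showing that the assignment $\gamma\mapsto(p_1\gamma,p_2\gamma)$ is a bijection on $2$-cells. First I would record that, by the horizontal composition formula $a\hc b=ag(b)$ in $\Bialgcc$ together with the fact that $\mathrm{id}_{p_i}=1_{H_i}$, one has $p_i\gamma=\mathrm{id}_{p_i}\hc\gamma=p_i(\gamma)$ for every $2$-cell $\gamma$.

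The key ingredient will be the factorisation of grouplike elements in a tensor product: every $\gamma\in G(H_1\otimes H_2)$ is of the form $\alpha\otimes\beta$ with $\alpha\in G(H_1)$ and $\beta\in G(H_2)$. I would prove this by applying $\mathrm{Id}\otimes\varepsilon\otimes\varepsilon\otimes\mathrm{Id}$ to both sides of $\Delta_{H_1\otimes H_2}(\gamma)=\gamma\otimes\gamma$: using $\Delta_{H_1\otimes H_2}=(\mathrm{Id}\otimes\tau\otimes\mathrm{Id})(\Delta_{H_1}\otimes\Delta_{H_2})$, the left-hand side returns $\gamma$, while the right-hand side returns $p_1(\gamma)\otimes p_2(\gamma)$, whence $\gamma=p_1(\gamma)\otimes p_2(\gamma)$, with $p_1(\gamma)$ and $p_2(\gamma)$ grouplike as images of a grouplike element under the coalgebra maps $p_1,p_2$.

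Uniqueness in clause 2) is then immediate: if a $2$-cell $\gamma\colon h\Rightarrow k$ satisfies $p_1\gamma=\alpha$ and $p_2\gamma=\beta$, the factorisation forces $\gamma=p_1(\gamma)\otimes p_2(\gamma)=\alpha\otimes\beta$. For existence, given $2$-cells $\alpha\colon p_1h\Rightarrow p_1k$ and $\beta\colon p_2h\Rightarrow p_2k$, I would set $\gamma\coloneqq\alpha\otimes\beta$, which is grouplike with $\varepsilon_{H_1\otimes H_2}(\gamma)=1$ and satisfies $p_1\gamma=\alpha$ and $p_2\gamma=\beta$ (since $\varepsilon(\alpha)=\varepsilon(\beta)=1$). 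It remains to check the gauge condition \eqref{def:gauge2}. Writing $h=(p_1h\otimes p_2h)\Delta_X$ and $k=(p_1k\otimes p_2k)\Delta_X$ (the $1$-categorical decomposition of a map into a product, valid because $X$ is cocommutative), a short computation in Sweedler notation, using $\alpha\,p_1h(x_1)=p_1k(x_1)\,\alpha$ and $\beta\,p_2h(x_2)=p_2k(x_2)\,\beta$, gives $\gamma h(x)=k(x)\gamma$. This shows the comparison functor is fully faithful, hence an equivalence. The main (mild) obstacle is precisely this last compatibility check, as it is the only point where the two separate gauge conditions must be combined with the diagonal description of $h$ and $k$; everything else is formal.

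Finally, for the terminal object, since $\Bbbk$ is already terminal in the $1$-category $\Bialgcc$, the only $1$-cell $X\to\Bbbk$ is $\varepsilon_X$, and a $2$-cell $\varepsilon_X\Rightarrow\varepsilon_X$ is a grouplike element of $\Bbbk$ satisfying \eqref{def:gauge2} automatically. As $G(\Bbbk)=\{1\}$, the unique such $2$-cell is the identity, so $\Bialgcc(X,\Bbbk)$ is the terminal category and the functor $\Bialgcc(X,\Bbbk)\to *$ is an isomorphism, in particular an equivalence.
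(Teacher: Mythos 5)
Your proof is correct, but it takes a genuinely different route from the paper's. The paper deduces the lemma from \cref{thm:Tw2cart}: it observes that $2$-cells in $\Bialgcc$ between $f,f'\colon H\to H_1\otimes H_2$ are exactly the gauge transformations in $\TwTrBialg$ between $(f,1\ot 1)$ and $(f',1\ot 1)$ (since for trivial twists condition \eqref{def:gauge1} reduces to grouplikeness), and then imports existence, uniqueness and the explicit formula $g=g_1\otimes g_2$ from the universal property already established in $\TwTrBialg$; the terminal-object part is handled the same way in both arguments. You instead verify clause 2) directly and self-containedly, the key point being the factorisation $\gamma=p_1(\gamma)\otimes p_2(\gamma)$ for grouplikes of $H_1\otimes H_2$, obtained by applying $\mathrm{Id}\otimes\varepsilon\otimes\varepsilon\otimes\mathrm{Id}$ to $\Delta_{H_1\otimes H_2}(\gamma)=\gamma\otimes\gamma$, combined with the $1$-categorical decomposition $h=(p_1h\otimes p_2h)\Delta_X$ to check \eqref{def:gauge2} for $\alpha\otimes\beta$. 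All the individual steps check out (in particular $p_1(\alpha\otimes\beta)=\alpha\varepsilon(\beta)=\alpha$ uses that $\beta$ is grouplike, as you note). What each approach buys: the paper's is shorter on the page but leans on the long computational proof of \cref{thm:Tw2cart} and on the identification of $2$-cells under the $2$-functor $G$; yours is independent of that theorem, makes explicit \emph{why} the grouplike $2$-cells split along the tensor factors, and would work verbatim in $\mathsf{Coalg_{cc}}$ or any similar setting without any quasitriangular machinery.
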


\begin{proof}
We mentioned that the binary product of two objects $H_1$ and $H_2$ in the $1$-category $\Bialgcc$ is $(H_1\otimes H_2,\id\ot\varepsilon,\varepsilon\ot\id)$ and that, given morphisms $f_1:H\to H_1$ and $f_2:H\to H_2$ in $\Bialgcc$, the diagonal morphism is $\langle f_1,f_2\rangle\coloneq (f_1\ot f_2)\Delta_H:H\to H_1\ot H_2$. Now, given morphisms $f,f':H\to H_1\ot H_2$ and $2$-cells $g_1:(\id\ot\varepsilon)f\Rightarrow (\id\ot\varepsilon) f'$ and $g_2:(\varepsilon\ot\id)f\Rightarrow (\varepsilon\ot\id)f'$ in the $2$-category $\mathsf{Bialg}^{2}_{\mathsf{cc}}$, we have that $g_1:(\id\ot\varepsilon,1\ot1)(f,1\ot1)\Rightarrow (\id\ot\varepsilon,1\ot1) (f',1\ot1)$ and $g_2:(\varepsilon\ot\id,1\ot1)(f,1\ot1)\Rightarrow (\varepsilon\ot\id,1\ot1)(f',1\ot1)$ are also $2$-cells in $\TwTrBialg$. Thus, by Proposition \ref{pro:Tw2cart2}, we get a unique $2$-cell $g:(f,1\ot1)\Rightarrow (f',1\ot1)$ in $\TwTrBialg$ such that $(\id\ot\varepsilon,1\ot1)g=g_1$ and $(\varepsilon\ot\id,1\ot1)g=g_2$, namely $g=g_1\ot g_2$. Then $g:f\Rightarrow f'$ is the unique $2$-cell in $\mathsf{Bialg}^{2}_{\mathsf{cc}}$ such that $(\id\ot\varepsilon)g=g_1$ and $(\varepsilon\ot\id)g=g_2$.

The terminal object in the $1$-category $\Bialgcc$ is given by the base field $\Bbbk$. Now, given two morphisms $f,f':H\to \Bbbk$ in $\Bialgcc$, we  have that $f=f'=\varepsilon_H$. Thus, a $2$-cell $k:f\Rightarrow f'$ is a grouplike element $k$ in $\Bbbk$ and hence $k=1$ is the identity $2$-cell.    
\end{proof}

We observe that $U$ preserves binary products and the terminal object. Indeed, given cocommutative bialgebras $H_1$ and $H_2$, by Theorem \ref{thm:Tw2cart} the binary product object of $U(H_{1})$ and $U(H_{2})$ in $\TwTrBialg$ is given by
\[U(H_1)\otimes U(H_2)
=(H_1, 1\ot 1)\ot (H_2, 1\ot 1)=(H_1\ot H_2,1\ot1\ot1\ot 1)=U(H_1\ot H_2),\]
while the projections are given by $(\id\ot\varepsilon,1\ot 1)=U(\id\ot\varepsilon)$ and $(\varepsilon\ot\id,1\ot 1)=U(\varepsilon\ot\id).$ We can compute the diagonal morphism  $\langle U(\id\ot\varepsilon),U(\varepsilon\ot\id)\rangle
=((\id\ot\varepsilon)\ot (\varepsilon\ot\id))\Delta_{H_1\ot H_2},1\ot1\ot1\ot 1)
=\id.$ Thus, the $2$-functor $U$ preserves binary products. 
\begin{invisible}
 Note that, for $\Cc\coloneqq\TwTrBialg$, we have that
 \[\Cc((H,\Ss),U(H_1\ot H_2))=\Cc((H,\Ss),U(H_1)\ot U(H_2))\cong\Cc((H,\Ss),U(H_1))\times \Cc((H,\Ss),U(H_2))\]
 is a category equivalence.
\end{invisible}
Moreover, $U(\Bbbk)=(\Bbbk,1\ot 1)$, which is the terminal object in $\TwTrBialg$, see Lemma \ref{lem:termobj}. Therefore, the terminal morphism $U(\Bbbk)\to (\Bbbk,1\ot 1)$ is necessarily the identity, so $U$ preserves the terminal object.
\begin{invisible}
 Also here, for $\Cc\coloneqq\TwTrBialg$, we have that the functor
 \[\Cc((H,\Ss),U(\Bbbk))
 =\Cc((H,\Ss),(\Bbbk,1\otimes 1))\to *\] is a category isomorphism. 
\end{invisible}

If we compose $U$ and the $2$-functor $F:\TwTrBialg\to\TwBialg$ of \eqref{def:funcF}, we obtain the $2$-functor 
\[FU:\mathsf{Bialg}^{2}_{\mathsf{cc}}\to \TwBialg,\; H\mapsto H,\; f\mapsto (f,1\ot 1),\; a\mapsto a.\]

\subsection{The classifying category of \texorpdfstring{$\TwTrBialg$}{TwTr}}

Let $\Cc$ be a (locally small) $2$-category. Then, the isomorphism classes of $1$-cells form a set, i.e.\ $\Cc$ is \emph{locally essentially small}. Therefore, following \cite[Example 2.1.27]{Johnson-Yau-book}, we can consider the \emph{classifying category} $\mathsf{Cla}(\Cc)$ of $\Cc$. Its objects are the objects in $\Cc$ while for objects $X,Y$ in $\mathsf{Cla}(\Cc)$, the set $\mathsf{Cla}(\Cc)(X,Y)$ consists of isomorphism classes $[f]$ of $1$-cells $f$ in $\Cc(X,Y)$. The identity morphism of an object $X$ in $\mathsf{Cla}(\Cc)$ is $[\id_X]$. The composition of isomorphism classes is given by $[f]\circ [g]=[f\circ g]$.

It is easy to check that, if the $2$-category $\Cc$ has binary products and the terminal object, then the $1$-category $\mathsf{Cla}(\Cc)$ has them too and that the $1$-functor \[Q:\Cc\to \mathsf{Cla}(\Cc),\;X\mapsto X,\;f\mapsto [f],\] preserves them. Therefore, this construction allows one to pass to a simpler 1-categorical setting in which finite products can be transported and studied more conveniently. We apply it to obtain a genuine 1‑category of triangular bialgebras with finite products.

\begin{invisible} Let us consider the binary product in $\Cc$.
Let $[f]:X\rightarrow A$ and $[g]:X\rightarrow B$ be morphisms in $\mathsf{Cla}(\Cc)$. Since they are represented by morphisms $f:X\rightarrow A$ and $g:X\rightarrow B$ in $\Cc$, we know that there exists a morphism $h:X\rightarrow A\times B$ and
isomorphisms $ph\cong f$ and $qh\cong g$, where $p:A\times B\to A$ and $q:A\times B\to B$. Thus, $[p]\circ[h]=[ph]=[f]$ and $[q]\circ [h]=[qh]=[g]$.
\begin{equation*}
\xymatrix{&X\ar[ld]_{[f]}\ar[rd]^{[g]}\ar@{.>}[d]|{[h]}\\ A&
A\times B\ar[l]_{[p]}\ar[r]^{[q]}& B}
\end{equation*}%
Assume now there is another morphism $[k]:X\to A\times B$ such that $[p]\circ[k]=[f]$ and $[q]\circ [k]=[g]$. Then $pk\cong f\cong ph$ and $qk\cong g\cong qh $ so that 
there are invertible $2$-cells 
$\alpha:ph\Rightarrow pk$ and $\beta:qh\Rightarrow qk$. Thus, there is a unique $\gamma:h\Rightarrow k$ such that $p\gamma=\alpha$ and $q\gamma=\beta.$ Similarly, there is a unique $\gamma':k\Rightarrow h$ such that $p\gamma'=\alpha^{-1}$ and $q\gamma'=\beta^{-1}.$ Thus 
\[
p(\gamma\vc\gamma')=\id_p\hc(\gamma\vc\gamma')=(\id_p\vc\id_p)\hc(\gamma\vc\gamma')=(\id_p\hc\gamma)\vc(\id_p\hc\gamma')=p\gamma\vc p\gamma'=\id 
\]
and hence $\gamma\circ \gamma'=\id$. Similarly $\gamma'\circ \gamma=\id$. Thus $\gamma:h\Rightarrow k$ is invertible and hence $[h]=[k]$. 

Let us now consider the terminal object. Given an object $X$ in $\mathsf{Cla}(\Cc)$, then $X$ is an object in $\Cc$ so that there is a 1-cell $f:X\rightarrow \mathsf{1}$ and hence a 1-cell $[f]:X\rightarrow \mathsf{1}$.  If there is another 1-cell $[g]:X\rightarrow \mathsf{1}$, then it is represented by the 1-cell $g:X\rightarrow \mathsf{1}$. Therefore, there is a unique necessarily invertible $2$-cell $f\Rightarrow g$ so that $[f]=[g]$.

Explicitly, this means that for any 0-cell $X$ there is a 1-cell $X\rightarrow \mathsf{1}$ and for any two 1-cells $f,g:X\rightarrow \mathsf{1}$ there is a unique $2$-cell $f\Rightarrow g$. Note that this $2$-cell is necessarily  an isomorphism.

We deal with the functor $Q$. The diagonal morphism $<Q(p),Q(q)>:Q(A\times B)\to Q(A)\times Q(B)$ is uniquely determined by the equalities $[p]\circ <Q(p),Q(q)>=Q(p)$ and $[q]\circ <Q(p),Q(q)>=Q(p)$
and hence it is necessarily $[\id_{A\times B}]$. This shows that $Q$ preserves binary products. The terminal morphism $Q(\mathsf{1})\to \mathsf{1}$ is necessarily $\id_{\mathsf{1}}$. Thus, $Q$ preserves the terminal object.
\end{invisible}

In view of Theorem \ref{thm:Tw2cart}, the classifying category 
$\mathsf{Cla}(\TwTrBialg)$ of $\TwTrBialg$ has binary products and terminal object, hence it is a cartesian monoidal category. Explicitly, its objects are triangular bialgebras and its morphisms are equivalence classes $[f,\Ff]:(H,\Rr)\to (H',\Rr')$ of morphisms $(f,\Ff):(H,\Rr)\to (H',\Rr')$ in $\TwTrBialg$. Note that $[f,\Ff]=[f',\Ff']$ if, and only if, there is an invertible gauge transformation $a:(f,\Ff)\Rightarrow (f',\Ff')$. Thus, $[f,\Ff]$ can be called the \emph{gauge class} of $(f,\Ff)$. 

Given morphisms $(f_1,\Ff_1):(H,\Ss)\to (H_1,\Rr_1)$ and $(f_2,\Ff_2):(H,\Ss)\to (H_2,\Rr_2)$ in $\TwTrBialg$, the diagonal morphism  $\langle[f_1,\Ff_1],[f_2,\Ff_2]\rangle$ in $\mathsf{Cla}(\TwTrBialg)$ is given by the gauge class $[f,\Ff]$ where $(f,\Ff)=\langle(f_1,\Ff_1),(f_2,\Ff_2)\rangle$ is the diagonal morphism in $\TwTrBialg$ given in Proposition \ref{pro:Tw2cart1}.

\section{Applications to twisted tensor product}\label{sec:applications}

As recalled in \cite{Wu-Zhou}, the categorical problem of determining whether a braided finite tensor category is braided equivalent to the Deligne tensor product of two braided finite tensor categories motivates the analogous question for Hopf algebras: whether a quasitriangular Hopf algebra can be expressed as a tensor product of two non-trivial quasitriangular Hopf algebras. Two finite-dimensional quasitriangular Hopf algebras have braided equivalent representation categories precisely when one is isomorphic to a twist of the other. Consequently, every finite-dimensional quasitriangular Hopf algebra admitting a factorizable quotient Hopf algebra decomposes as a twisted tensor product of two quasitriangular Hopf algebras. In \cite{Wu-Zhou}, the authors established sufficient conditions for such a factorization to occur for an arbitrary quasitriangular Hopf algebra.

Following \cite[Definition 3.1]{Wu-Zhou} we recall the following definition where we drop out the hypothesis of the existence of an antipode. 

\begin{definition}
\label{def:twtensprod}
Let $(H,\Rr)$, $(H_1,\Rr_1)$ and $(H_2,\Rr_2)$ be quasitriangular bialgebras. Then $(H,\Rr)$ is called a \emph{twisted tensor product} of  $(H_1,\Rr_1)$ and $(H_2,\Rr_2)$ if there is a twist $\Ff$ on the tensor product bialgebra $H_{1}\otimes H_2$ and an isomorphism of quasitriangular bialgebras  $f:(H,\Rr)\to (( H_{1}\otimes H_2)_\Ff,\widetilde{\Rr}_\Ff)$ where $\widetilde{\Rr}=(\id\ot\tau\ot\id)(\Rr_1\ot\Rr_2)$.
\end{definition}

\begin{remark}
\label{rmk:twistedasinv1cell}
    By Lemma \ref{lem:isoTwTr}, if $(H,\Rr)$, $(H_1,\Rr_1)$ and $(H_2,\Rr_2)$ are triangular bialgebras, then $(H,\Rr)$ is a twisted tensor product of $(H_{1},\Rr_{1})$ and $(H_{2},\Rr_{2})$ if there is a twist $\Ff$ on the tensor product bialgebra $H_{1}\otimes H_2$ and an invertible 1-cell $(f,\Ff):(H,\Rr)\to(H_{1}\ot H_{2},\widetilde{\Rr})$ in $\TwTrBialg$.
\end{remark}


In \cite[Theorem
4.2]{Wu-Zhou} there is a characterization of quasitriangular Hopf algebras $(H,\Rr) $ that are a twisted tensor product of two quotient Hopf algebras $H_{1}$ and
$H_{2}$ with twist $\mathcal{J}=1\otimes \left(
f_{2}S\otimes f_{1}\right) \left( \Rr \right) \otimes 1$, where $f_{1}:H\to H_{1}$ and $f_{2}:H\to H_{2}$ denote the projections. In the triangular case, we are able to provide a further characterization in terms of the binary product in $\TwTrBialg$. We prove the result for bialgebras noting that, if $H$ has an antipode $S$, then one has $ 1\otimes \left( f_{2}\otimes
f_{1}\right) \left( \Rr^{-1}\right) \otimes 1 =1\otimes \left(
f_{2}S\otimes f_{1}\right) \left( \Rr \right) \otimes 1.$ 

\begin{corollary}
\label{cor:twtnsprdweak}
The following are equivalent for a triangular bialgebra $\left( H,\Rr \right) $.
\begin{enumerate}
    \item There are triangular bialgebras $(H_{1},\Rr_1)$ and $(H_{2},\Rr_2)$ and an
invertible $1$-cell $\left( f,\Ff \right) :\left( H,\Rr \right)
\rightarrow \left( H_{1},\Rr _{1}\right) \otimes \left( H_{2},%
\Rr _{2}\right) $ in $\TwTrBialg$, where $\Ff =1\otimes \Ww^{-1}%
\otimes 1$ for a central weak $\Rr $-matrix $\Ww\in
H_{2}\otimes H_{1}$.

\item There are triangular bialgebras $(H_{1},\Rr_{1})$ and $(H_{2},\Rr_{2})$ and surjective morphisms $f_i:(H,\Rr)\to (H_i,\Rr_i)$ in $\TrBialg$, for $i=1,2$, such that the diagonal morphism $\langle(f_1,1\otimes 1),(f_2,1\otimes 1)\rangle:(H,\Rr)\to \left( H_{1},\Rr _{1}\right) \otimes \left( H_{2},%
\Rr _{2}\right) $ is an
invertible $1$-cell in $\TwTrBialg$.

\item  There are triangular bialgebras $(H_1,\Rr_1)$ and $(H_2,\Rr_2)$, and an isomorphism $f:(H,\Rr)\to (( H_{1}\otimes H_2)_\Ff,\widetilde{\Rr}_\Ff)$ in $\TrBialg$, where $\widetilde{\Rr}=(\id\ot\tau\ot\id)(\Rr_1\ot\Rr_2)$ and $\Ff =1\otimes \Ww^{-1}%
\otimes 1$ for a central weak $\Rr $-matrix $\Ww\in
H_{2}\otimes H_{1}$,
i.e.\ $(H,\Rr)$ is a twisted tensor product of  $(H_1,\Rr_1)$ and $(H_2,\Rr_2)$ in the sense of Definition \ref{def:twtensprod},  where  $\Ff =1\otimes \Ww^{-1}%
\otimes 1$.
\item 
There are bialgebras $H_{1}$ and $H_{2}$ and surjective bialgebra maps $f_{i}:H\rightarrow  H_{i} ,i=1,2$, such that $(f_1\ot f_2)\Delta:H\to (H_1\otimes H_2)_\Ff$ is a bialgebra isomorphism, where $\Ff \coloneqq  1\otimes \left( f_{2}\otimes
f_{1}\right) \left( \Rr^{-1}\right) \otimes 1 .$
\end{enumerate}
\end{corollary}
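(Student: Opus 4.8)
The plan is to prove the four conditions equivalent by treating $(1)\Leftrightarrow(3)$ and $(2)\Leftrightarrow(4)$ separately and then closing the circle with $(1)\Rightarrow(2)$ and the substantial implication $(4)\Rightarrow(1)$. The first equivalence is essentially a restatement: by \cref{lem:isoTwTr} an invertible $1$-cell $(f,\Ff)\colon(H,\Rr)\to(H_1,\Rr_1)\ot(H_2,\Rr_2)$ is precisely an isomorphism $f\colon(H,\Rr)\to((H_1\ot H_2)_\Ff,\widetilde{\Rr}_\Ff)$ in $\TrBialg$, so after unfolding \cref{def:twtensprod} as in \cref{rmk:twistedasinv1cell} both $(1)$ and $(3)$ assert the existence of such an isomorphism with $\Ff=1\ot\Ww^{-1}\ot1$ for a central weak $\Rr$-matrix $\Ww$. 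For $(2)\Leftrightarrow(4)$ the only issue is the triangular data on $H_1,H_2$: starting from $(4)$ I would equip each $H_i$ with $\Rr_i\coloneqq(f_i\ot f_i)(\Rr)$, which is triangular because $f_i$ is a surjective bialgebra map and $\Rr$ satisfies \eqref{qtr1}--\eqref{tr}; then $f_i$ becomes a morphism in $\TrBialg$, and by \cref{thm:Tw2cart} the diagonal morphism is $\big((f_1\ot f_2)\Delta,\ 1\ot(f_2\ot f_1)(\Rr^{-1})\ot1\big)$, whose first component is the given bialgebra isomorphism and which is therefore an invertible $1$-cell by \cref{lem:isoTwTr}. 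Conversely $(2)\Rightarrow(4)$ simply forgets the triangular structures on the $H_i$.

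For $(1)\Rightarrow(2)$ I would set $f_i\coloneqq p_i\circ f$ with $p_1=\id\ot\varepsilon$ and $p_2=\varepsilon\ot\id$. Each $(f_i,1\ot1)=(p_i,1\ot1)\circ(f,\Ff)$ is a composite of $1$-cells in $\TwTrBialg$; here one checks $(p_i\ot p_i)(\Ff)=1\ot1$ using $(\varepsilon\ot\id)(\Ww^{-1})=1$ and $(\id\ot\varepsilon)(\Ww^{-1})=1$, so the $f_i$ are surjective morphisms in $\TrBialg$. A short computation with the algebra maps $p_j\ot p_k$ (evaluating them on $\Ff$, $\Ff^{-1}$, $\widetilde{\Rr}$ and on $(f\ot f)(\Rr)=\widetilde{\Rr}_\Ff$, which holds by \eqref{eq:deformedR}) then gives $(f_1\ot f_2)\Delta=f$ and $(f_2\ot f_1)(\Rr)=\Ww$, so that the diagonal morphism of \cref{thm:Tw2cart} is exactly $(f,\Ff)$; being invertible, $(2)$ follows.

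The heart of the argument is $(4)\Rightarrow(1)$, where a general decomposition must be upgraded to one with a \emph{central} weak $\Rr$-matrix. I would first transport the triangular structure of $H$ along the isomorphism $f$ to obtain a triangular structure $\mathcal{T}\coloneqq(f\ot f)(\Rr)$ on $(H_1\ot H_2)_\Ff$, and then untwist by $\Ff^{-1}$ to produce a genuine triangular structure $\mathcal{T}'\coloneqq(\Ff^{-1})^{\op}\mathcal{T}\Ff$ on the \emph{untwisted} bialgebra $H_1\ot H_2$ (so that $\mathcal{T}'_\Ff=\mathcal{T}$). At this point \cref{pro:Rot} applies and rewrites $\mathcal{T}'$, up to cohomology, in the standard form $(1\ot\Qq\ot1)(\id\ot\tau\ot\id)(\Rr_1\ot\Rr_2)$ with triangular structures $\Rr_i$ on $H_i$ and a \emph{central} weak $\Rr$-matrix $\Qq\in H_2\ot H_1$. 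The remaining task is to convert this into an invertible $1$-cell of the required central shape $\Ff'=1\ot\Qq^{-1}\ot1$: the gauge element furnished by \cref{pro:Rot} should define a gauge transformation, and since $2$-cells in $\TwTrBialg$ are automatically compatible with the triangular structures, it ought to yield an isomorphism in $\TrBialg$ onto $((H_1\ot H_2)_{\Ff'},\widetilde{\Rr}_{\Ff'})$, which composed with $f$ gives what $(1)$ requires.

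I expect this last step to be the main obstacle. Note that having a \emph{central} $\Ww$ forces $\Ff=1\ot\Ww^{-1}\ot1$ to be central, hence $\Delta_\Ff=\Delta$ and $(H_1\ot H_2)_\Ff=H_1\ot H_2$, so $(1)$ is genuinely stronger than $(4)$: it demands that the twist be cohomologous to a central one. Meanwhile \cref{pro:Rot} only controls quasitriangular structures \emph{up to cohomology}, and gauge equivalence of $\Rr$-matrices need not give an isomorphism of the associated triangular bialgebras on the nose. The delicate content is therefore to show that the centrality extracted from triangularity via \cref{pro:Rot} survives the retwisting and actually assembles into an invertible $1$-cell with twist of the form $1\ot\Qq^{-1}\ot1$; establishing this, rather than the bookkeeping in the other implications, is where the real work lies.
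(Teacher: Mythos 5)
Your treatment of the three ``bookkeeping'' implications is sound and close to the paper's. For $(1)\Leftrightarrow(3)$ you use \cref{lem:isoTwTr} exactly as the paper does. For $(2)\Leftrightarrow(4)$ you equip $H_i$ with $\Rr_i\coloneqq(f_i\ot f_i)(\Rr)$ and invoke \cref{thm:Tw2cart} and \cref{lem:isoTwTr}; the paper does the same, only adding an explicit verification that $((f_1\ot f_2)\Delta\ot(f_1\ot f_2)\Delta)(\Rr)=\widetilde{\Rr}_{\Ff}$ (which is in any case guaranteed by \cref{thm:Tw2cart}). For $(1)\Rightarrow(2)$ your route differs from the paper's: you compute directly that $(f_1\ot f_2)\Delta=f$ and $(f_2\ot f_1)(\Rr)=\Ww$ by applying $p_1\ot p_2$ and $p_2\ot p_1$ to $(f\ot f)\Delta=\Ff\Delta_{H_1\ot H_2}f(-)\Ff^{-1}$ and to $(f\ot f)(\Rr)=\Ff^{\op}\widetilde{\Rr}\Ff^{-1}$, so that the diagonal morphism is literally $(f,\Ff)$; the paper instead reaches the same identification through the uniqueness clause of the $2$-universal property (the unique $2$-cell $g$ with $p_i(g)=1$ is computed to be $1\ot 1$). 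Both work; yours is the more elementary calculation.

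The genuine gap is the return implication $(4)\Rightarrow(1)$ (equivalently $(2)\Rightarrow(1)$), which you explicitly leave open, and the strategy you sketch for it cannot succeed. You propose to transport $\Rr$ to $\mathcal{T}=(f\ot f)(\Rr)$ on $(H_1\ot H_2)_{\Ff}$ and untwist by $\Ff^{-1}$ to land in the setting of \cref{pro:Rot}. But $\mathcal{T}=\widetilde{\Rr}_{\Ff}=\Ff^{\op}\widetilde{\Rr}\Ff^{-1}$, so the untwisted structure is $(\Ff^{-1})^{\op}\mathcal{T}\Ff=\widetilde{\Rr}$ on the nose, which is already in the canonical form of \cref{pro:Rot} with $\Qq=1\ot1$: the proposition returns exactly what you fed it and says nothing about the twist $\Ff$, which is the object whose centrality is at stake. (\cref{pro:Rot} classifies quasitriangular structures on $H_1\ot H_2$; here $\Ff$ is merely a twist, and the relevant classification, \cref{thm:twistot}, only produces a weak $\Rr$-matrix, not a central one.) The paper closes the cycle quite differently: under $(2)$ the diagonal morphism of \cref{thm:Tw2cart} with $\Ff_1=\Ff_2=1\ot1$ is $\big((f_1\ot f_2)\Delta_H,\,1\ot(f_2\ot f_1)(\Rr^{-1})\ot1\big)$, and the paper takes $\Ww=(f_2\ot f_1)(\Rr)$ as the required central weak $\Rr$-matrix, declaring the implication obvious. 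Your instinct that the centrality of this element is the delicate point is well placed — it amounts to the identity $f_2(y_1)\ot f_1(y_2)=f_2(y_2)\ot f_1(y_1)$ for all $y\in H$, which does not follow from quasi-cocommutativity alone — but your proposal neither proves it nor supplies a workable substitute, so the equivalence is not established.
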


\begin{proof}
First recall that $\left( H_{1},\Rr _{1}\right) \otimes \left( H_{2},%
\Rr _{2}\right) =\left( H_{1}\otimes H_{2},\widetilde{\Rr }%
\right)$, where $\widetilde{\Rr }=\left( \id \otimes \tau
\otimes \id \right) \left( \Rr _{1}\otimes \Rr %
_{2}\right) .$

\noindent $\left( 1\right) \Rightarrow \left( 2\right) $. Define the following 1-cells in $\TwTrBialg$:
\begin{align*}
\left( f_{1},\Ff _{1}\right)  &\coloneqq\left( \id \otimes
\varepsilon ,1\otimes 1\right) \left( f,\Ff \right)
=\left( \left( \id \otimes \varepsilon \right) f,\left( \id %
\otimes \varepsilon \otimes \id \otimes \varepsilon \right) \left(
1\otimes \Ww^{-1}\otimes 1\right) \right)  =\left( \left( \id \otimes \varepsilon \right) f,1\otimes 1\right),\\
\left( f_{2},\Ff _{2}\right)  &\coloneqq\left( \varepsilon \otimes \mathrm{%
Id},1\otimes 1\right)\left( f,\Ff \right)=\left( \left( \varepsilon \otimes \id \right) f,\left( \varepsilon
\otimes \mathrm{\mathrm{\id }}\otimes \varepsilon \otimes \id %
\right) \left( 1\otimes \Ww^{-1}\otimes 1\right) \right)  =\left( \left( \varepsilon \otimes \id \right) f,1\otimes 1\right) .
\end{align*}%
In particular, $f_i:(H,\Rr)\to (H_i,\Rr_i)$ is a morphism in $\TrBialg$ for $i=1,2.$
By Lemma \ref{lem:isoTwTr}, $\left( f,\Ff \right) $ is invertible in $\TwTrBialg$ if and only if $%
f:\left( H,\Rr \right) \rightarrow \left( \left( H_{1}\otimes
H_{2}\right) _{\Ff },\widetilde{\Rr }_{\Ff }\right) $
is an isomorphism in $\TrBialg$. Therefore, $f:H\rightarrow
H_{1}\otimes H_{2}$ is bijective and hence $f_{1}=\left( \id \otimes
\varepsilon \right) f$ and $f_{2}=\left( \varepsilon \otimes \id %
\right) f$ are surjective. 

By construction, the diagonal morphism $\left\langle \left( f_{1},%
\Ff _{1}\right) ,\left( f_{2},\Ff _{2}\right) \right\rangle$ satisfies 
\begin{align*}
\left( \id \otimes
\varepsilon ,1\otimes 1\right) \circ \left\langle \left( f_{1},%
\Ff _{1}\right) ,\left( f_{2},\Ff _{2}\right) \right\rangle &=\left( f_{1},%
\Ff _{1}\right)\\
\left(\varepsilon \ot \id  ,1\otimes 1\right) \circ \left\langle \left( f_{1},%
\Ff _{1}\right) ,\left( f_{2},\Ff _{2}\right) \right\rangle &=\left( f_{2},%
\Ff _{2}\right).
\end{align*}
Hence, we can
take 
\begin{align*}
 g_{1}&=1:\left( \id \otimes
\varepsilon ,1\otimes 1\right) \circ \left\langle \left( f_{1},%
\Ff _{1}\right) ,\left( f_{2},\Ff _{2}\right) \right\rangle
\Rightarrow \left( \id \otimes \varepsilon ,1\otimes 1\right) \circ
\left( f,\Ff \right) \\
g_{2}&=1:\left(\varepsilon \ot \id  ,1\otimes 1\right) \circ \left\langle \left( f_{1},%
\Ff _{1}\right) ,\left( f_{2},\Ff _{2}\right) \right\rangle \Rightarrow \left( \varepsilon \otimes \id ,1\otimes
1\right) \circ \left( f,\Ff \right).
\end{align*} 
By the universal property of
the binary product there is a unique $2$-cell $g:\left\langle \left( f_{1},%
\Ff _{1}\right) ,\left( f_{2},\Ff _{2}\right) \right\rangle
\Rightarrow \left( f,\Ff \right) $ such that $\left( \id %
\otimes \varepsilon ,1\otimes 1\right) g=g_{1}=1$ and $\left(
\varepsilon \otimes \id ,1\otimes 1\right) g=g_{2}=1.$ By
construction,
\begin{align*}
g &=\left( \id \otimes \varepsilon \otimes \varepsilon \otimes
\id \right) \left( \Ff ^{-1}\right) \left( g_{1}\otimes
g_{2}\right)(\id \otimes \varepsilon \otimes \varepsilon \otimes
\id )((1\ot(f_{2}\ot f_{1})(\Rr^{-1})\ot1)(\id \ot\tau\ot\id )(\Ff_{1}\ot\Ff_{2}))\\&= \left( 1\otimes \left(  \varepsilon \otimes \varepsilon \right)(\Ww)\otimes 1\right) \left( 1\otimes
1\right)(1\ot(\varepsilon\ot\varepsilon)(\Rr^{-1})\ot1)(\id \ot\varepsilon\ot\varepsilon\ot\id )(\Ff_{1}\ot\Ff_{2}) =1\otimes 1.
\end{align*}%
This means that $\left\langle \left( f_{1},\Ff _{1}\right) ,\left(
f_{2},\Ff _{2}\right) \right\rangle =\left( f,\Ff \right) $. Therefore, $\langle(f_1,1\otimes 1),(f_2,1\otimes 1)\rangle$ is an invertible 1-cell in $\TwTrBialg$. The implication $(2)\Rightarrow(1)$ is obvious once observed the specific form assumed by the diagonal morphism $(f,\Ff)$ in Proposition \ref{pro:Tw2cart1} when $\Ff_1=1\otimes 1$ and $\Ff_2=1\otimes 1$.

\noindent $\left( 1\right) \Leftrightarrow \left( 3\right) $.  It follows by Lemma \ref{lem:isoTwTr}, as mentioned in Remark \ref{rmk:twistedasinv1cell}.

\noindent $\left( 2\right) \Leftrightarrow \left( 4\right) $. If $f_i:(H,\Rr)\to (H_i,\Rr_i)$ is a surjective morphism in $\TrBialg$, for $i=1,2$, then $f_i:H\to H_i$ is a surjective bialgebra map, for $i=1,2$. Conversely, if $f_i:H\to H_{i}$ is a surjective bialgebra map, for $i=1,2$, then $f_i:(H,\Rr)\to (H_i,\Rr_i)$ is a surjective morphism in $\TrBialg$ where $\Rr_i\coloneqq (f_i\otimes f_i)(\Rr)$, for $i=1,2$. 
By Proposition \ref{pro:Tw2cart1}, the diagonal morphism of $\left(
f_{1},1\otimes 1\right) $ and $\left( f_{2},1\otimes 1\right) $ is given by%
\begin{eqnarray*}
\left\langle \left( f_{1},1\otimes 1\right) ,\left( f_{2},1\otimes 1\right) \right\rangle  &=&\left( \left( f_{1}\otimes f_{2}\right) \Delta
_{H},\left( 1\otimes \left( f_{2}\otimes f_{1}\right) \left( \Rr %
^{-1}\right) \otimes 1\right) \right)  .
\end{eqnarray*}
Thus, by Lemma \ref{lem:isoTwTr}, $ \langle(f_1,1\otimes 1),(f_2,1\otimes 1)\rangle$ is invertible in $\TwTrBialg$ if, and only if, $\left( f_{1}\otimes f_{2}\right) \Delta
_{H}:(H,\Rr)\to ((H_1\ot H_2)_\Ff,\widetilde{\Rr}_\Ff)$ is an isomorphism in $\TrBialg$, where $\Ff= 1\otimes \left( f_{2}\otimes f_{1}\right) \left( \Rr %
^{-1}\right) \otimes 1$, i.e.\ $(f_{1}\ot f_{2})\Delta_{H}:H\to(H_{1}\ot H_{2})_{\Ff}$ is a bialgebra isomorphism and $((f_{1}\ot f_{2})\Delta_{H}\ot (f_{1}\ot f_{2})\Delta_{H})(\Rr)=\widetilde{\Rr}_{\Ff}$. To conclude, we observe that, given $\Rr_{i}:=(f_{i}\ot f_{i})(\Rr)$, for $i=1,2$, the latter equality is satisfied:
\begin{align*}
\widetilde{\Rr}_{\Ff}&=(1\otimes \left( f_{2}\otimes f_{1}\right) \left( \Rr %
^{-1}\right) \otimes 1)^{\mathrm{op}}(f_{1}(\Rr^{i})\ot f_{2}(\Rr^{j})\ot f_{1}(\Rr_{i})\ot f_{2}(\Rr_{j}))(1\otimes \left( f_{2}\otimes f_{1}\right) \left( \Rr %
^{-1}\right) \otimes 1)^{-1}\\&=(f_{1}(\overline{\Rr}_{s})\ot1\ot1\ot f_{2}(\overline{\Rr}^{s}))(f_{1}(\Rr^{i})\ot f_{2}(\Rr^{j})\ot f_{1}(\Rr_{i})\ot f_{2}(\Rr_{j}))(1\ot f_{2}(\Rr^{l})\ot f_{1}(\Rr_{l})\ot 1)\\
&\overset{\mathclap{\eqref{tr}}}{=}(f_1(\Rr^{s})\ot1\ot1\ot f_{2}(\Rr_{s}))(f_{1}(\Rr^{i})\ot f_{2}(\Rr^{j})\ot f_{1}(\Rr_{i})\ot f_{2}(\Rr_{j}))(1\ot f_{2}(\Rr^{l})\ot f_{1}(\Rr_{l})\ot 1)\\&=(f_{1}\ot f_{2}\ot f_{1}\ot f_{2})(\Rr^{s}\Rr^{i}\ot\Rr^{j}\Rr^{l}\ot \Rr_{i}\Rr_{l}\ot\Rr_{s}\Rr_{j})\\&
\overset{\mathclap{\eqref{eq:DDR}}}=(f_{1}\ot f_{2}\ot f_{1}\ot f_{2})(\Delta_H\ot\Delta_H)(\Rr).
\end{align*}
This computation completes the proof.
\end{proof}

Still in the triangular setting, we now provide a characterization of the twisted tensor product (Definition \ref{def:twtensprod}) for an arbitrary twist, thus not necessarily one of the form $\Ff=1\ot \Ww^{-1}\ot 1$ as in Corollary \ref{cor:twtnsprdweak}, by employing binary products in $\TwTrBialg$. To this aim, we first need Lemma \ref{lem:iso&2cell} which is based on the following remark.

\begin{remark} 
\label{rmk:partial}
Consider two morphisms $(f',\Ff'),(f,\Ff):(H',\Rr')\to (H,\Rr)$ in $\TwTrBialg$ and let $a:(f,\Ff)\Rightarrow (f',\Ff')$ be a gauge transformation, with $a\in H$. If it is invertible, then $a\in H ^\times_{\varepsilon}\coloneqq\{a\in H^\times\mid \varepsilon(a)=1\}$, see Remark \ref{rmk:gaugeps}.
As in \cite[\S4.2]{Davydov}, we note that any $a\in H^\times_\varepsilon$ defines an automorphism $\partial(a):(H,\Rr)\to (H,\Rr)$, by setting $\partial(a)\coloneqq (\hat{a},(1\otimes 1)^a)$,
where $(1\otimes 1)^a$ is given as in \eqref{def:F^h}, for $\Ff=1\otimes 1$. Hence one obtains 
the map \[\partial:H^\times_\varepsilon\to \mathrm{Aut}_{\TwTrBialg}(H,\Rr)\] which  is a group homomorphism. 
By definition of composition, one gets $\partial(a)\circ (f,\Ff)=(f',\Ff').$ 
\end{remark}

\begin{lemma}
\label{lem:iso&2cell}
   Let $a:(f,\Ff)\Rightarrow (f',\Ff')$ be an invertible $2$-cell in $\TwTrBialg$. Then, the $1$-cell $(f,\Ff)$ is invertible if, and only if, so is $(f',\Ff').$
\end{lemma}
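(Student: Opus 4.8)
The plan is to reduce the statement to the elementary categorical fact that a composite of two invertible $1$-cells is invertible, by realizing $(f',\Ff')$ as the composite of $(f,\Ff)$ with an automorphism of $(H',\Rr')$ manufactured from the gauge transformation $a$. The whole point is that an \emph{invertible} $2$-cell gives rise to an honest isomorphism of $1$-cells, and this is precisely the content already packaged into \cref{rmk:partial}.

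First I would invoke \cref{rmk:partial}. Since $a:(f,\Ff)\Rightarrow(f',\Ff')$ is an invertible $2$-cell, we have $a\in(H')^\times_{\varepsilon'}$, so $a$ lies in the domain of the group homomorphism $\partial:(H')^\times_{\varepsilon'}\to\mathrm{Aut}_{\TwTrBialg}(H',\Rr')$. In particular $\partial(a)=(\hat a,(1\otimes1)^a)$ is an invertible $1$-cell of $\TwTrBialg$, and by the composition formula \eqref{eq:compo} one has the identity $\partial(a)\circ(f,\Ff)=(f',\Ff')$ recorded at the end of \cref{rmk:partial}. If one wishes to check this identity by hand it reduces to two points: $\hat a\circ f=f'$, which is \eqref{def:gauge2} combined with the invertibility of $a$; and $(\hat a\otimes\hat a)(\Ff)\,(1\otimes1)^a=(a\otimes a)\Ff\Delta'(a^{-1})=\Ff'$, where the last equality is \eqref{def:gauge1}. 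One must keep track of the order of domain and codomain in $\hat a$ (cf. \cref{rmk:hata}), but this is exactly what the composition convention \eqref{eq:compo} accounts for.

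With this identity in hand, both implications are immediate and perfectly symmetric. If $(f,\Ff)$ is invertible, then $(f',\Ff')=\partial(a)\circ(f,\Ff)$ is a composite of two invertible $1$-cells, hence invertible. Conversely, if $(f',\Ff')$ is invertible, then, using that $\partial$ is a group homomorphism so that $\partial(a)^{-1}=\partial(a^{-1})$, we obtain $(f,\Ff)=\partial(a^{-1})\circ(f',\Ff')$, again a composite of invertible $1$-cells, hence invertible. I do not expect any genuine obstacle here: the only place the hypothesis is used is that invertibility of the $2$-cell $a$ is what forces $a\in(H')^\times_{\varepsilon'}$ and thereby guarantees that $\partial(a)$ is a true isomorphism in $\TwTrBialg$ rather than merely a $1$-cell.
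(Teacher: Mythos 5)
Your proof is correct and takes essentially the same route as the paper's: both reduce the statement to the identity $\partial(a)\circ(f,\Ff)=(f',\Ff')$ from \cref{rmk:partial}, with $\partial(a)$ an invertible $1$-cell because $a\in(H')^{\times}_{\varepsilon'}$. The paper's proof is a one-line invocation of that remark; your explicit verification of $\hat a\circ f=f'$ and $(\hat a\otimes\hat a)(\Ff)(1\otimes1)^a=\Ff'$ is accurate and merely fills in the details the paper leaves implicit.
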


\begin{proof}
By Remark \ref{rmk:partial}, since $a$ is invertible, then $\partial(a)\circ (f,\Ff)=(f',\Ff')$, with $\partial(a)$  invertible.
\end{proof}

We are now ready to state the aforementioned characterization.

\begin{corollary}
The following are equivalent for a triangular bialgebra $\left( H,%
\Rr \right) $.

\begin{enumerate}
\item There are triangular bialgebras $(H_{1},\Rr_{1})$ and $(H_{2},%
\Rr_{2})$, and an invertible $1$-cell $\left( f,\Ff \right) :\left( H,\Rr\right) \rightarrow \left( H_{1},\Rr _{1}\right) \otimes \left(
H_{2},\Rr _{2}\right) $ in $\TwTrBialg$.

\item There are triangular bialgebras $(H_{1},\Rr_{1})$ and $(H_{2},\Rr_{2})$, twists $\Ff_{i}$ of $H_{i}$ and surjective morphisms $f_{i}:(H,\Rr%
)\rightarrow (\left( H_{i}\right) _{\Ff _{i}},\left( \Rr_{i}\right) _{%
\Ff _{i}})$ in $\TrBialg$, for $i=1,2$, such that the diagonal morphism $\langle (f_{1},%
\Ff _{1}),(f_{2},\Ff _{2})\rangle :(H,\Rr)\rightarrow \left(
H_{1},\Rr _{1}\right) \otimes \left( H_{2},\Rr _{2}\right) $
is an invertible $1$-cell in $\TwTrBialg$.
\item There are triangular bialgebras $(H_1,\Rr_1)$ and $(H_2,\Rr_2)$, a twist $\Ff$ on $H_{1}\ot H_{2}$ and an isomorphism $f:(H,\Rr)\to (( H_{1}\otimes H_2)_\Ff,\widetilde{\Rr}_\Ff)$ in $\TrBialg$ where $\widetilde{\Rr}=(\id\ot\tau\ot\id)(\Rr_1\ot\Rr_2)$,
i.e.\ $(H,\Rr)$ is a twisted tensor product of $(H_1,\Rr_1)$ and $(H_2,\Rr_2)$ in the sense of Definition \ref{def:twtensprod}.
\item 
There are bialgebras $H_{1}$ and $H_{2}$ and surjective bialgebra maps $f_{i}:H\rightarrow \left( H_{i}\right) _{\Ff _{i}},i=1,2$, such that $(f_1\ot f_2)\Delta:H\to (H_1\otimes H_2)_\Ff$ is a bialgebra isomorphism, where $\Ff \coloneqq \left( 1\otimes \left( f_{2}\otimes
f_{1}\right) \left( \Rr^{-1}\right) \otimes 1\right) \big(\left(
\id \otimes \tau \otimes \id \right) \left( \Ff %
_{1}\otimes \Ff _{2}\right)\big) .$
\end{enumerate}
\end{corollary}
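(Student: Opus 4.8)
The plan is to prove the four conditions equivalent along the same lines as \cref{cor:twtnsprdweak}, of which this statement is the general-twist version, using \cref{lem:iso&2cell} to absorb the nontrivial gauge discrepancy that now appears. I would first dispose of the cheap equivalences. For $(1)\Leftrightarrow(3)$, \cref{lem:isoTwTr} says that an invertible $1$-cell $(f,\Ff):(H,\Rr)\to (H_1,\Rr_1)\otimes(H_2,\Rr_2)$ is precisely an isomorphism $f:(H,\Rr)\to((H_1\otimes H_2)_\Ff,\widetilde{\Rr}_\Ff)$ in $\TrBialg$, which is the definition of twisted tensor product as recorded in \cref{rmk:twistedasinv1cell}. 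The implication $(2)\Rightarrow(1)$ is immediate, since the diagonal morphism is in particular a $1$-cell into the product.

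For the crucial implication $(1)\Rightarrow(2)$, I set $p_1=\id\otimes\varepsilon$, $p_2=\varepsilon\otimes\id$ and define $(f_i,\Ff_i):=(p_i,1\otimes 1)\circ(f,\Ff)$, so that $\Ff_i=(p_i\otimes p_i)(\Ff)$ is a twist on $H_i$ by \cref{thm:twistot}, and $f_i:(H,\Rr)\to((H_i)_{\Ff_i},(\Rr_i)_{\Ff_i})$ is a morphism in $\TrBialg$. Since $(f,\Ff)$ is invertible, \cref{lem:isoTwTr} makes $f:H\to H_1\otimes H_2$ bijective, whence $f_i=p_i f$ is surjective. Now $(f,\Ff)$ and $\langle(f_1,\Ff_1),(f_2,\Ff_2)\rangle$ have the same composites $(f_i,\Ff_i)$ with the two projections, so I may feed $g_1=1_{H_1}$ and $g_2=1_{H_2}$ into the universal property of \cref{thm:Tw2cart} to obtain a $2$-cell $g:(f,\Ff)\Rightarrow \langle(f_1,\Ff_1),(f_2,\Ff_2)\rangle$. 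Its explicit form there is $g=(\id\otimes\varepsilon\otimes\varepsilon\otimes\id)(\tilde{\Ff}^{-1})\,(\id\otimes\varepsilon\otimes\varepsilon\otimes\id)(\Ff)$, with $\tilde{\Ff}$ the twist of the diagonal morphism. Since $\tilde{\Ff}$ is already in the normal form of \cref{thm:twistot}, a short computation using the normalisation of $\Ff_1,\Ff_2$ together with $(\varepsilon\otimes\varepsilon)(\Rr^{-1})=1$ gives $(\id\otimes\varepsilon\otimes\varepsilon\otimes\id)(\tilde{\Ff})=1_{H_1\otimes H_2}$, so that $g=(\id\otimes\varepsilon\otimes\varepsilon\otimes\id)(\Ff)$. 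As the image of the invertible twist $\Ff$ under the algebra map $\id\otimes\varepsilon\otimes\varepsilon\otimes\id$, this $g$ is invertible, and \cref{lem:iso&2cell} then transfers invertibility of $(f,\Ff)$ to the diagonal morphism, yielding $(2)$.

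Finally I would treat $(2)\Leftrightarrow(4)$. Given the data of $(2)$, \cref{thm:Tw2cart} identifies $\langle(f_1,\Ff_1),(f_2,\Ff_2)\rangle$ with $((f_1\otimes f_2)\Delta_H,\Ff)$ for the stated $\Ff$, and \cref{lem:isoTwTr} turns its invertibility into the statement that $(f_1\otimes f_2)\Delta_H:(H,\Rr)\to((H_1\otimes H_2)_\Ff,\widetilde{\Rr}_\Ff)$ is an isomorphism in $\TrBialg$; forgetting the triangular structure and observing that the $f_i:H\to(H_i)_{\Ff_i}$ are surjective bialgebra maps yields $(4)$. For the converse, the one genuinely new point is to manufacture the triangular structures on $H_i$: given surjective bialgebra maps $f_i:H\to(H_i)_{\Ff_i}$, the element $(f_i\otimes f_i)(\Rr)$ is a triangular structure on $(H_i)_{\Ff_i}$ (a surjective image of a triangular bialgebra is triangular, as is standard and easily checked from \eqref{qtr1}--\eqref{tr}), and untwisting it by $\Ff_i^{-1}$, i.e. setting $\Rr_i:=(\Ff_i^{\op})^{-1}(f_i\otimes f_i)(\Rr)\Ff_i$, produces a triangular structure on $H_i$ with $(\Rr_i)_{\Ff_i}=(f_i\otimes f_i)(\Rr)$. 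Each $f_i$ is then a morphism in $\TrBialg$ as required in $(2)$, and invertibility of the diagonal morphism follows from the assumed bialgebra isomorphism via \cref{lem:isoTwTr}, using that a bijective morphism in $\TrBialg$ is automatically an isomorphism there.

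I expect the main obstacle to be the step $(1)\Rightarrow(2)$, precisely in checking that the comparison $2$-cell $g$ supplied by the universal property is invertible. This is where the passage from the special twist $1\otimes\Ww^{-1}\otimes 1$ of \cref{cor:twtnsprdweak} (where $g$ came out equal to $1\otimes 1$, forcing equality of the two $1$-cells) to an arbitrary twist obliges one to argue through \cref{lem:iso&2cell} rather than by identifying the $1$-cells outright; the computation reducing $g$ to $(\id\otimes\varepsilon\otimes\varepsilon\otimes\id)(\Ff)$ is the technical heart, and the untwisting in $(4)\Rightarrow(2)$ is the only other place requiring more than bookkeeping.
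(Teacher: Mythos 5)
Your proposal is correct and follows essentially the same route as the paper: the same use of \cref{lem:isoTwTr} for $(1)\Leftrightarrow(3)$, the same construction $(f_i,\Ff_i)=(p_i,1\otimes 1)\circ(f,\Ff)$ followed by the universal property and \cref{lem:iso&2cell} for $(1)\Rightarrow(2)$ (your comparison $2$-cell $(\id\otimes\varepsilon\otimes\varepsilon\otimes\id)(\Ff)$ is exactly the paper's $(\Gg')^{\pm 1}$, up to the orientation of the $2$-cell), and the same untwisting $\Rr_i=(f_i\otimes f_i)(\Rr)_{\Ff_i^{-1}}$ for $(4)\Rightarrow(2)$. The only divergence is that in $(4)\Rightarrow(2)$ the paper verifies the identity $((f_1\otimes f_2)\Delta_H)^{\otimes 2}(\Rr)=\widetilde{\Rr}_\Ff$ by an explicit half-page computation, whereas you obtain it for free by observing that \cref{thm:Tw2cart} already guarantees the diagonal is a $1$-cell in $\TwTrBialg$ (hence compatible with the triangular structures), so bijectivity of the underlying bialgebra map suffices; this is a sound and slightly cleaner delegation of the same computation.
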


\begin{proof}
First recall that $\left( H_{1},\Rr _{1}\right) \otimes \left( H_{2},%
\Rr _{2}\right) =\left( H_{1}\otimes H_{2},\widetilde{\Rr }%
\right),$ where $\widetilde{\Rr }=\left( \id \otimes \tau
\otimes \id \right) \left( \Rr _{1}\otimes \Rr %
_{2}\right) .$

\noindent $\left( 1\right) \Rightarrow \left( 2\right) $. Define the following 1-cells in $\TwTrBialg$:
\begin{gather*}
\left( f_{1},\Ff _{1}\right) \coloneqq\left( \id \otimes
\varepsilon ,1\otimes 1\right) \circ \left( f,\Ff \right) =\left(
\left( \id \otimes \varepsilon \right) f,\left( \id \otimes
\varepsilon \otimes \id \otimes \varepsilon \right) \left( \Ff %
\right) \right) , \\
\left( f_{2},\Ff _{2}\right) \coloneqq\left( \varepsilon \otimes
\id ,1\otimes 1\right) \circ \left( f,\Ff \right) =\left(
\left( \varepsilon \otimes \id \right) f,\left( \varepsilon \otimes
\mathrm{\mathrm{\id }}\otimes \varepsilon \otimes \id \right)
\left( \Ff \right) \right) .
\end{gather*}%
In particular, $f_{i}:(H,\Rr)\rightarrow (\left( H_{i}\right) _{\Ff %
_{i}},\left( \Rr_{i}\right) _{\Ff _{i}})$ is a morphism in $\TrBialg$  for $%
i=1,2.$ By Lemma \ref{lem:isoTwTr}, $\left( f,\Ff \right) $ is an
isomorphism in $\TwTrBialg$ if and only if $f:\left( H,\Rr \right)
\rightarrow \left( \left( H_{1}\otimes H_{2}\right) _{\Ff },%
\widetilde{\Rr }_{\Ff }\right) $ is an isomorphism in $\TrBialg
$. Therefore, $f:H\rightarrow H_{1}\otimes H_{2}$ is bijective and hence $%
f_{1}=\left( \id \otimes \varepsilon \right) f$ and $f_{2}=\left(
\varepsilon \otimes \id \right) f$ are surjective.
By construction, the diagonal morphism $\left\langle \left( f_{1},%
\Ff _{1}\right) ,\left( f_{2},\Ff _{2}\right) \right\rangle$ satisfies 
\begin{align*}
\left( \id \otimes
\varepsilon ,1\otimes 1\right) \circ \left\langle \left( f_{1},%
\Ff _{1}\right) ,\left( f_{2},\Ff _{2}\right) \right\rangle &=\left( f_{1},%
\Ff _{1}\right)\\
\left(\varepsilon \ot \id  ,1\otimes 1\right) \circ \left\langle \left( f_{1},%
\Ff _{1}\right) ,\left( f_{2},\Ff _{2}\right) \right\rangle &=\left( f_{2},%
\Ff _{2}\right).
\end{align*}
Hence, we can
take 
\begin{align*}
 g_{1}&=1:\left( \id \otimes
\varepsilon ,1\otimes 1\right) \circ \left\langle \left( f_{1},%
\Ff _{1}\right) ,\left( f_{2},\Ff _{2}\right) \right\rangle
\Rightarrow \left( \id \otimes \varepsilon ,1\otimes 1\right) \circ
\left( f,\Ff \right) \\
g_{2}&=1:\left(\varepsilon \ot \id  ,1\otimes 1\right) \circ \left\langle \left( f_{1},%
\Ff _{1}\right) ,\left( f_{2},\Ff _{2}\right) \right\rangle \Rightarrow \left( \varepsilon \otimes \id ,1\otimes
1\right) \circ \left( f,\Ff \right).
\end{align*} 
The universal property of
the binary product yields a unique $2$-cell $g:\left\langle \left( f_{1},%
\Ff _{1}\right) ,\left( f_{2},\Ff _{2}\right) \right\rangle
\Rightarrow \left( f,\Ff \right) $ such that $\left( \id %
\otimes \varepsilon ,1\otimes 1\right) g=g_{1}=1$ and $\left(
\varepsilon \otimes \id ,1\otimes 1\right) g=g_{2}=1.$ By
construction, setting 
\begin{align*}
\Gg&\coloneqq (\id \ot\varepsilon\ot\varepsilon\ot\id )((1\ot(f_{2}\ot f_{1})(\Rr^{-1})\ot1)(\id \ot\tau\ot\id )(\Ff_{1}\ot\Ff_{2}))=1\ot1,\\ \Gg'&\coloneqq\left( \id \otimes
\varepsilon \otimes \varepsilon \otimes \id \right) \left( \Ff %
\right) ,    
\end{align*}
we have
$g=\mathcal{G'}^{-1}\left( g_{1}\otimes g_{2}\right)\Gg =\mathcal{G'}^{-1}$. Hence, we have the $2$-cell $\mathcal{G'}^{-1}:\left\langle \left( f_{1},%
\Ff _{1}\right) ,\left( f_{2},\Ff _{2}\right) \right\rangle
\Rightarrow \left( f,\Ff \right) ,$ which is invertible as so is the element $\Gg'$. Thus, we can apply Lemma \ref{lem:iso&2cell}, to conclude that, since $(f,\Ff)$ is an invertible $1$-cell, then so is $\left\langle \left( f_{1},%
\Ff _{1}\right) ,\left( f_{2},\Ff _{2}\right) \right\rangle.$

\noindent $\left( 2\right) \Rightarrow \left( 1\right). $ It is clear.

\noindent $\left( 1\right) \Leftrightarrow \left( 3\right) $. It follows by Lemma \ref{lem:isoTwTr}, as mentioned in Remark \ref{rmk:twistedasinv1cell}.

\noindent $\left( 2\right) \Leftrightarrow \left( 4\right) $. If $f_i:(H,\Rr)\to((H_i)_{\Ff_i},(\Rr_i)_{\Ff_i})$ is a surjective morphism in $\TrBialg$, for $i=1,2$, then $f_i:H\to (H_i)_{\Ff_i}$ is a surjective bialgebra map, for $i=1,2$. Conversely, if $f_i:H\to (H_i)_{\Ff_i}$ is a surjective bialgebra map, for $i=1,2$, then $(H_{i})_{\Ff_i}$ becomes triangular in a unique way such that $f_i$ becomes a morphism in $\TrBialg$, i.e.\ via $(f_i\otimes f_i)(\Rr)$. If we set $\Rr_i\coloneqq (f_i\otimes f_i)(\Rr)_{\Ff_i^{-1}}$, we then get $(\Rr_i)_{\Ff_i}=(f_i\otimes f_i)(\Rr)$ so that $f_i:(H,\Rr)\to((H_i)_{\Ff_i},(\Rr_i)_{\Ff_i})$ is a surjective morphism in $\TrBialg$, for $i=1,2$. By Proposition \ref{pro:Tw2cart1}, the diagonal morphism of $\left(
f_{1},\Ff_{1}\right) $ and $\left( f_{2},\Ff_{2}\right) $ is given by%
\begin{eqnarray*}
\left\langle \left( f_{1},\Ff_{1}\right) ,\left( f_{2},\Ff_{2}\right) \right\rangle  &=&\left( \left( f_{1}\otimes f_{2}\right) \Delta
_{H},\left( 1\otimes \left( f_{2}\otimes f_{1}\right) \left( \Rr %
^{-1}\right) \otimes 1\right)(\id \ot\tau\ot\id )(\Ff_{1}\ot\Ff_{2}) \right)  .
\end{eqnarray*}
By Lemma \ref{lem:isoTwTr}, the $1$-cell 
$\langle (f_{1},%
\Ff _{1}),(f_{2},\Ff _{2})\rangle :(H,\Rr)\rightarrow \left(
H_{1},\Rr _{1}\right) \otimes \left( H_{2},\Rr _{2}\right) $
is invertible in $\TwTrBialg$ if, and only if, 
$\left( f_{1}\otimes f_{2}\right) \Delta
_{H}:(H,\Rr)\to ((H_1\ot H_2)_\Ff,\widetilde{\Rr}_\Ff)$ is an isomorphism in $\TrBialg$, where $\Ff= \left( 1_{H_{1}}\otimes \left( f_{2}\otimes
f_{1}\right) \left( \Rr^{-1}\right) \otimes 1_{H_{2}}\right) \big(\left(
\id \otimes \tau \otimes \id \right) \left( \Ff %
_{1}\otimes \Ff _{2}\right)\big)$, i.e.\ $(f_{1}\ot f_{2})\Delta_{H}:H\to(H_{1}\ot H_{2})_{\Ff}$ is a bialgebra isomorphism and $((f_{1}\ot f_{2})\Delta_{H}\ot (f_{1}\ot f_{2})\Delta_{H})(\Rr)=\widetilde{\Rr}_{\Ff}$. To conclude, we observe that, given $\Rr_{i}:=(f_{i}\ot f_{i})(\Rr)_{\Ff_{i}^{-1}}$, for $i=1,2$, we have
\[
\Rr_{1}\ot\Rr_{2}=((\Ff_{1}^{-1})^{\mathrm{op}}\ot(\Ff_{2}^{-1})^{\mathrm{op}})(f_{1}\ot f_{1}\ot f_{2}\ot f_{2})(\Rr\ot\Rr)(\Ff_{1}\ot\Ff_{2})
\]
and then the following equality is satisfied:
\[
\begin{split}
\widetilde{\Rr}_{\Ff}&=(1\otimes \left( f_{2}\otimes f_{1}\right) \left( \Rr %
^{-1}\right) \otimes 1)^{\mathrm{op}}((\id \ot\tau\ot\id )(\Ff_{1}\ot\Ff_{2}))^{\mathrm{op}}\\&\hspace{0.5cm}(\id \ot\tau\ot\id )((\Ff_{1}^{-1})^{\mathrm{op}}\ot(\Ff_{2}^{-1})^{\mathrm{op}})
(f_{1}(\Rr^{i})\ot f_{2}(\Rr^{j})\ot f_{1}(\Rr_{i})\ot f_{2}(\Rr_{j}))(\id \ot\tau\ot\id )(\Ff_{1}\ot\Ff_{2})\\&\hspace{0.5cm}((\id \ot\tau\ot\id )(\Ff_{1}\ot\Ff_{2}))^{-1}(1\otimes \left( f_{2}\otimes f_{1}\right) \left( \Rr %
^{-1}\right) \otimes 1)^{-1}\\&=(f_{1}(\overline{\Rr}_{s})\ot1\ot1\ot f_{2}(\overline{\Rr}^{s}))(f_{1}(\Rr^{i})\ot f_{2}(\Rr^{j})\ot f_{1}(\Rr_{i})\ot f_{2}(\Rr_{j}))(1\ot f_{2}(\Rr^{l})\ot f_{1}(\Rr_{l})\ot 1)\\&\overset{\eqref{tr}}{=}(f_1(\Rr^{s})\ot1\ot1\ot f_{2}(\Rr_{s}))(f_{1}(\Rr^{i})\ot f_{2}(\Rr^{j})\ot f_{1}(\Rr_{i})\ot f_{2}(\Rr_{j}))(1\ot f_{2}(\Rr^{l})\ot f_{1}(\Rr_{l})\ot 1)\\&=(f_{1}\ot f_{2}\ot f_{1}\ot f_{2})((\Rr^{s}\Rr^{i}\ot\Rr^{j}\Rr^{l}\ot \Rr_{i}\Rr_{l}\ot\Rr_{s}\Rr_{j}))\\&\overset{\eqref{eq:DDR}}=(f_{1}\ot f_{2}\ot f_{1}\ot f_{2})(\Delta\ot\Delta)(\Rr).
\end{split}
\]
This computation completes the proof. 
%
%
\end{proof}

As noticed in Remark \ref{rmk:twistedasinv1cell}, a triangular bialgebra $(H,\Rr)$ is a twisted tensor product of triangular bialgebras $(H_{1},\Rr_{1})$ and $(H_{2},\Rr_{2})$ if and only if there is an invertible 1-cell $(f,\Ff):(H,\Rr)\to(H_{1}\ot H_{2},\widetilde{\Rr})$ in $\TwTrBialg$. Using this, we end this section providing some simple examples of twisted tensor products.

\begin{example}
In view of Remark \ref{rmk:tensorproductmorphism} and Lemma \ref{lem:isoTwTr}, we obtain the following basic examples.

1). Given the Sweedler Hopf algebra $(H,\Rr_{\lambda})$, we can define invertible 1-cells $(f_{s},\Ff_{d}):(H,\Rr_{\lambda})\to(H,\Rr_{\lambda s^{2}-2d})$ in $\TwTrBialg$, for $0\not=s\in\Bbbk$ and $d\in\Bbbk$, as in Example \ref{exa:Sw1}. Thus
\[ (f_{s},\Ff_{d})\otimes (\id,1\ot 1)\overset{\eqref{eq:tns1cell}}=   (f_{s}\ot\id ,(\Ff_{d})^i\ot1\ot(\Ff_{d})_{i}\ot1):(H,\Rr_{\lambda})\ot(K,1\ot1)\to(H,\Rr_{\lambda s^{2}-2d})\ot(K,1\ot1)
\]   
    is an invertible 1-cell in $\TwTrBialg$, for any cocommutative bialgebra $K$. Therefore, $(H,\Rr_{\lambda})\ot(K,1\ot1)$ is a twisted tensor product of $(H,\Rr_{\lambda s^{2}-2d})$ and $(K,1\ot1)$. 
    
    2). Given the abelian group $\Gamma=\langle x,y\ |\ xy=yx,\ x^{n}=1,\ y^{n}=1\rangle$, for $n>1$, we can define invertible 1-cells $(f,\Ff):(\Bbbk\Gamma,\Rr)\to(\Bbbk\Gamma,\Rr')$ as in Example \ref{ex:groupalgebra}. Then
\[
(f_s\ot\Ff_d)\ot(f,\Ff)\overset{\eqref{eq:tns1cell}}=(f_{s}\ot f,(\id \ot\tau\ot\id )(\Ff_{d}\ot\Ff)):(H,\Rr_{\lambda})\ot(\Bbbk\Gamma,\Rr)\to(H,\Rr_{\lambda s^{2}-2d})\ot(\Bbbk\Gamma,\Rr')
\]
is an invertible 1-cell in $\TwTrBialg$. Therefore, $(H,\Rr_{\lambda})\ot(\Bbbk\Gamma,\Rr)$ is a twisted tensor product of $(H,\Rr_{\lambda s^{2}-2d})$ and $(\Bbbk\Gamma,\Rr')$.
\end{example}

\medskip

\noindent\emph{Acknowledgements.} The authors would like to thank Paolo Aschieri, Lucrezia Bottegoni and Alan Cigoli for insightful discussions and useful comments that contributed to the development of our study. The authors also thank the referee for several useful suggestions. This paper was written while the authors were members of the “National Group for Algebraic and Geometric Structures and their Applications” (GNSAGA-INdAM). It is partially based upon work from COST Action CaLISTA CA21109 supported by COST (European Cooperation in Science and Technology). 
The authors were also partially supported by the project funded by the European Union -NextGenerationEU under NRRP, Mission 4 Component 2 CUP D53D23005960006 - Call PRIN 2022 No. 104 of February 2, 2022 of Italian Ministry of University and Research; Project 2022S97PMY \emph{Structures for Quivers, Algebras and Representations} (SQUARE). \medskip



\bibliography{semqtri}
\bibliographystyle{acm}

\end{document}